\documentclass[11pt, notitlepage]{article}
\usepackage{amssymb,amsmath,comment}
\usepackage{epsfig,enumerate,amsmath,amsfonts,amssymb}
\usepackage{indentfirst}
\usepackage{pstricks}
\usepackage{setspace}
\usepackage{latexsym}
\usepackage{amsthm}
\usepackage{amsmath}
\usepackage{parskip}
\usepackage{fixmath}
\catcode`\@=11 \@addtoreset{equation}{section}
\def\thesection{\arabic{section}}

\def\theequation{\thesection.\arabic{equation}}
\catcode`\@=12
\usepackage{colortbl}
\usepackage{a4wide}

\newcommand{\ds} {\displaystyle}
\newcommand{\e}{\varepsilon}
\newcommand{\pa} {\partial}
\newcommand{\al} {\alpha}

\newcommand{\de} {\delta}

\newcommand{\Om} {\Omega}
\newcommand{\ra} {\rightarrow}
\newcommand{\rp} {\rightharpoonup}
\newcommand{\De} {\Delta}
\newcommand{\la} {\lambda}
\newcommand{\La} {\Lambda}
\newcommand{\noi} {\noindent}
\newcommand{\na} {\nabla}
\newcommand{\mb} {\mathbb}

\newcommand{\lra} {\longrightarrow}
\newcommand{\ld} {\langle}
\newcommand{\rd} {\rangle}

\setcounter{page}{1}\pagestyle{myheadings}\markboth{\small xyz} {\small }
\usepackage[all]{xy}
\catcode`\@=11
\def\theequation{\@arabic{\c@section}.\@arabic{\c@equation}}
\catcode`\@=12

\def\QED{\hfill {$\square$}\goodbreak \medskip}

\newtheorem{Theorem}{Theorem}[section]
\newtheorem{Lemma}[Theorem]{Lemma}
\newtheorem{Proposition}[Theorem]{Proposition}

\newtheorem{Remark}[Theorem]{Remark}

\begin{document}
\vspace{0.01in}

\title
{Kirchhoff equations with Hardy-Littlewood-Sobolev critical nonlinearity}

\author{ {\bf Divya Goel\footnote{e-mail: divyagoel2511@gmail.com} \; 
	and \;  K. Sreenadh\footnote{
		e-mail: sreenadh@maths.iitd.ac.in}} \\ Department of Mathematics,\\ Indian Institute of Technology Delhi,\\
	Hauz Khaz, New Delhi-110016, India. }

\date{}

\maketitle

\begin{abstract}
We consider the following Kirchhoff - Choquard equation
\[
-M(\|\na u\|_{L^2}^{2})\De u = \la f(x)|u|^{q-2}u+ \left(\int_{\Om}\frac{|u(y)|^{2^*_{\mu}}}{|x-y|^{\mu}}dy\right)|u|^{2^*_{\mu}-2}u  \; \text{in}\;
\Om,\quad 
 u = 0 \;    \text{ on } \pa \Om ,
\]
 where  $\Om$ is a  bounded domain in $\mathbb{R}^N( N\geq 3)$ with $C^2$ boundary,  $2^*_{\mu}=\frac{2N-\mu}{N-2}$, $1<q\leq 2$,  
and $f$ is  a continuous  real valued  sign changing function. When $1<q< 2$, using the method of Nehari manifold and Concentration-compactness Lemma,  we prove the  existence and multiplicity of positive solutions of the above problem. We also prove the existence of a positive solution when $q=2$ using the Mountain Pass Lemma.
\medskip
  
\noi \textbf{Key words:} Kirchhoff equation, Hardy-Littlewood-Sobolev critical exponent, Positive solution.

\medskip

\noi \textit{2010 Mathematics Subject Classification: 35A15, 35J60, 35J20.} 

\end{abstract}
\section{Introduction}
The purpose of this article is to investigate the existence and multiplicity of positive solutions of the  following critical growth Kirchhoff-Choquard equation 
\begin{align*}
(P_\la)
\left\{
-\left(  a+\e^p \left(\ds \int_{\Om} |\na u|^2~dx\right)^{\theta-1}\right)\De u\right.
&=\la f(x)|u|^{q-2}u+ \left(\ds \int_{\Om}\frac{|u(y)|^{2^*_{\mu}}}{|x-y|^{\mu}}dy\right)|u|^{2^*_{\mu}-2}u,  \, \text{in}\,
\Om,\\
u&=0 \, \text{ on } \pa \Om, 
\end{align*}
\noi where $\Om \subset\mathbb{R}^N (N\geq 3$) is a bounded domain with $C^2$ boundary,   $ \e>0$ is small enough, $0<\mu<N$, $1<q\leq 2$, $a, \la,p,\theta$ are positive real numbers such that $ p>N-2$ and $\theta \in [1,2^*_{\mu})$. Here   $2^*_{\mu}=\frac{2N-\mu}{N-2}$ is the critical exponent in the sense of Hardy-Littlewood-Sobolev inequality (see \eqref{co9}). The function $f(x)$ is  a  continuous  real valued  sign changing function such that  $f\in L^{r}
(\Om),$ where $r=\frac{2^*}{2^*-q}, 2^*=2_0^*,$ is the critical exponent of the Sobolev embedding  $H^1_0(\Om)$ into $L^{2^*}(\Om)$. 

Recently,  the study of existence and uniqueness of positive solutions for Choquard type equations attracted a lot of attention of researchers due to its vast applications in physical models.  In 1954, Pekar\cite{pekar} studied the following  equation that arises in quantum theory of poloron: 
\begin{equation}\label{kh20}
-\De u +u = (|x|^{-1}* |u|^2) u\; \text{in}\; \mathbb{R}^3. 
\end{equation}
Later \eqref{kh20} was used  as an approximation of the equation that arises in Hartree-Fock theory\cite{ehleib}. Recently, Moroz and Schaftingen \cite{moroz4} studied the Choquard equations and proved the existence, asymptotic behavior  and symmetry of solutions. We cite  \cite{moroz2, moroz3} for the work of Choquard type equations over the whole space $\mathbb{R}^N$. In  \cite{yang},   Gao and  Yang  studied the  Brezis-Nirenberg type existence results for the following  critical Choquard problem in bounded domains $\Om\subset \mb R^N, N\ge 3$ having smooth boundary $\pa \Om$:
\begin{equation*}
-\De u = \la h(u)+ \left(\int_{\Om}\frac{|u(y)|^{2^*_{\mu}}}{|x-y|^{\mu}}dy\right)|u|^{2^*_{\mu}-2}u \text{ in } \Om, \quad
u=0 \text{ on } \pa \Om, 
\end{equation*}
\noi where $\la>0$, $0<\mu<N$ and $h(u)=u$.  Later  in \cite{yangjmaa} author used variational methods to prove the existence and multiplicity  of positive solutions for equations involving convex and convex-concave type nonlinearities ($h(u)=u^q, 0<q<1$). For more work on Choquard equations, Interested readers are   referred to \cite{guide,ts}  and references therein.\\
 On a similar note, the study of Kirchhoff-type equations received much attention due to its widespread application in  various models of physical  and biological systems. Indeed, Kirchhoff in \cite{kirchhoff}
	studied the following equation 
	\begin{align*}
	\rho \frac{\pa^2 u}{\pa t^2} -\left(\frac{P_0}{h}+\frac{E}{2L} \int_0^L \bigg| \frac{\pa u}{\pa x}  \bigg|^2~dx \right)\frac{\pa^2 u}{\pa x^2}=0,
	\end{align*}
	where $\rho, P_0,h,E,L$ represents physical quantities.  This model extends the classical D'Alembert wave equation by considering the effects of the changes in the length of the strings during the 	vibrations. Existence of solutions for Kirchhoff equations involving the critical Sobolev exponent have been studied by many authors. Chen, Kuo and Wu \cite{wu1}  studied the following Kirchhoff problem
	\begin{align*}
	-M(\|\na u\|_{L^2}^{2})\De u = \la f(x)|u|^{q-2}u+ g(x)|u|^{p-2}u  \; \text{in}\;
	\Om,\; 
	u = 0 \;    \text{ on } \pa \Om,
	\end{align*}
	where  $M(t)=a+b\,t$, $a,b>0,\; 1<q<2<p<2^*$ and   $f$ and $g $ are continuous  real valued  sign changing functions. Here authors proved the existence and multiplicity of solutions using the classical Nehari manifold methods. Recently, Lei, Liu and  Guo \cite{chunlei}, studied the following critical exponent problem 
	\begin{equation}\label{kh19}
	-\left(  a+\e \|\na u\|_{L^2}^{2} \right)\De u
	=\la |u|^{q-2}u+|u|^{4} u\; \text{in}\;
	\Om,
	u=0 \; \text{ on } \pa \Om,
	\end{equation}
	where $\Om$ is a bounded domain in $\mathbb{R}^3,\; a>0,\; 1<q<2,\; \e>0$ is small enough and $\la>0$ is positive real number. Here they proved that if $\e>0$ is sufficiently small then there exists a $\la_*>0$ such that for any $\la\in (0,\la_*),$ problem \eqref{kh19} has at least two positive solutions, and one of the solution is a  ground state solution.	 We refer to \cite{alves1, pawan1,valdi1,  Figueiredo1, Figueiredo2, pawan3} for Kirchhoff problems involving the classical Laplace operator and  $p-$fractional Laplace operators. 
 
In \cite{lu}, L\"u studied the following Kirchhoff equation with Hartee-type nonlinearity 
\begin{align}\label{kh21}
-\left(  a+b \|\na u\|_{L^2}^{2} \right)\De u + (1+\la g(x))u 
=\left(\ds |x|^{-\mu}* |u|^{p} \right)|u|^{p-2}u, \; \text{in}\; \mathbb{R}^3
\end{align}
where $a>0$, $ b\geq 0$ are constants, $\mu \in  (0, 3)$, $p \in  (2, 6 -\mu )$, $\la >0$ is a parameter and $g(x)$ is a nonnegative continuous potential satisfying some conditions. By using the technique of  Nehari manifold and the concentration compactness principle, authors proved the  existence of ground state solutions of \eqref{kh21}, if the parameter $\la$ is large enough. Later  Li, Gao and Zhu \cite{li}, studied the  existence of  
sign-changing solutions to a class of Kirchhoff-type systems with Hartree type nonlinearity
in $\mathbb{R}^3$ on the sign-changing Nehari manifold and a quantitative deformation lemma.
 All the above mentioned articles on  Choquard-k
 Kirchhoff problems are on $\mathbb{R}^3$. To the  best of our knowledge, there is no result available in the current literature on   Kirchhoff equations
 with Choquard nonlinearity in higher dimension.\\
 
 In  this article we consider the Choquard-Kirchhoff problems  with critical growth nonlinearity in higher dimensions. We study the existence and multiplicity of positive solutions of the problem $(P_\la)$. Using the variational methods on the Nehari manifold we prove the existence of two positive solutions. For the existence of first solution we use the minimization argument over the Nehari manifold associated with problem $(P_\la)$.  In order to prove the existence of second solution we divide the proof into two cases: $\mu<\min \{4,N\}$ and $\mu\geq \min \{4,N\}$.  The salient feature of this article is the  novel asymptotic analysis (See Lemma \ref{khlem9} and Lemma \ref{khlem10})  to study the critical level below which Palias-Smale sequences are compact.  The asymptotic estimates on the critical term are delicate and we use various inequalities especially when $2_{\mu}^{*} \in (2, 3)$. Finally, by finding a relation between $\la$ and $\e$ we obtain the required sequence below the critical level. We also proved the existence of a positive solution of $(P_\la)$ in case of $q=2$ using the Mountain Pass Lemma.  Overall, this work adds to the body of knowledge and is a new contribution to the literature of Choquard-Kirchhoff equations.  With this introduction we will state our main results:

%
\begin{Theorem}\label{khthm1}
Let $1<q< 2$ then there exists  $\La^*>0 $ such that for all $\la \in (0, \La^*)$,  $(P_\la)$ admits a  positive solution  for all $\e>0$.
\end{Theorem}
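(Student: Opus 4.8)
The idea is to obtain the solution as a minimiser of the energy functional
\[
I_\la(u)=\frac a2\|\na u\|_{L^2}^2+\frac{\e^p}{2\theta}\|\na u\|_{L^2}^{2\theta}-\frac\la q\int_\Om f|u|^q\,dx-\frac1{2\cdot2^*_\mu}\int_\Om\int_\Om\frac{|u(x)|^{2^*_\mu}|u(y)|^{2^*_\mu}}{|x-y|^\mu}\,dx\,dy
\]
over the Nehari manifold $\mc N_\la=\{u\in H_0^1(\Om)\setminus\{0\}:\ld I_\la'(u),u\rd=0\}$; the Hardy--Littlewood--Sobolev inequality \eqref{co9} and $f\in L^r(\Om)$ make $I_\la$ of class $C^1$. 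I would start from the fibering map $\phi_u(t)=I_\la(tu)$ and split $\mc N_\la=\mc N_\la^+\cup\mc N_\la^0\cup\mc N_\la^-$ according to the sign of $\phi_u''(1)$. On $\mc N_\la$ the nonlocal term can be removed, giving
\[
I_\la(u)=a\Big(\tfrac12-\tfrac1{2\cdot2^*_\mu}\Big)\|\na u\|_{L^2}^2+\e^p\Big(\tfrac1{2\theta}-\tfrac1{2\cdot2^*_\mu}\Big)\|\na u\|_{L^2}^{2\theta}-\la\Big(\tfrac1q-\tfrac1{2\cdot2^*_\mu}\Big)\int_\Om f|u|^q\,dx,
\]
and since $1\le\theta<2^*_\mu$ and $q<2<2\cdot2^*_\mu$ all three constants are positive; together with $\int_\Om f|u|^q\,dx\le\|f\|_r S^{-q/2}\|\na u\|_{L^2}^q$ and $q<2$ this shows that $I_\la$ is coercive and bounded below on $\mc N_\la$.

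The first crucial point is that there exists $\La^*>0$, \emph{independent of $\e$}, such that $\mc N_\la^0=\emptyset$ for every $\la\in(0,\La^*)$. Indeed, for $u\in\mc N_\la^0$, eliminating the nonlocal term between $\ld I_\la'(u),u\rd=0$ and $\phi_u''(1)=0$ gives
\[
(2\cdot2^*_\mu-2)\,a\|\na u\|_{L^2}^2+(2\cdot2^*_\mu-2\theta)\,\e^p\|\na u\|_{L^2}^{2\theta}=(2\cdot2^*_\mu-q)\,\la\int_\Om f|u|^q ,
\]
hence $\|\na u\|_{L^2}\le(C_1\la)^{1/(2-q)}$, while eliminating the $f$--term gives
\[
(2-q)\,a\|\na u\|_{L^2}^2+(2\theta-q)\,\e^p\|\na u\|_{L^2}^{2\theta}=(2\cdot2^*_\mu-q)\int_\Om\int_\Om\frac{|u(x)|^{2^*_\mu}|u(y)|^{2^*_\mu}}{|x-y|^\mu}\,dx\,dy ,
\]
hence $\|\na u\|_{L^2}\ge c_0>0$; in both chains the two $\e$--terms carry the favourable sign, so $C_1$ and $c_0$ do not involve $\e$, and the two bounds are incompatible once $\la<\La^*:=c_0^{\,2-q}/C_1$. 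Consequently $\mc N_\la=\mc N_\la^+\cup\mc N_\la^-$ is a disjoint union of $C^1$ manifolds and every minimiser of $I_\la|_{\mc N_\la}$ is a free critical point of $I_\la$ (the Lagrange multiplier vanishes because $\phi_u''(1)\neq0$ on $\mc N_\la$). Finally, choosing $w$ with $\int_\Om f|w|^q>0$ (possible since $f$ changes sign) and shrinking $\La^*$ if necessary, the term $-\frac\la q t^q\int_\Om f|w|^q$ makes $\phi_w$ strictly negative and decreasing for small $t>0$, so $\phi_w$ has a local minimum at some $t_1>0$ with $\phi_w(t_1)<0$, and $\mc N_\la^0=\emptyset$ forces $t_1w\in\mc N_\la^+$; therefore $c:=\inf_{\mc N_\la}I_\la\le\inf_{\mc N_\la^+}I_\la<0$.

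Then I would run the minimisation. Take $\{u_n\}\subset\mc N_\la$ with $I_\la(u_n)\to c$; by Ekeland's variational principle it may be taken to be a Palais--Smale sequence for $I_\la|_{\mc N_\la}$, and since $|\phi_{u_n}''(1)|$ stays bounded away from $0$ the multiplier again vanishes in the limit, so $I_\la'(u_n)\to0$ in $H^{-1}(\Om)$. By coercivity $\{u_n\}$ is bounded, so, up to a subsequence, $u_n\rp u$ in $H_0^1(\Om)$, $u_n\to u$ a.e.\ and in $L^s(\Om)$ for $s<2^*$; a standard splitting argument using $f\in L^r$ gives $\int_\Om f|u_n|^q\to\int_\Om f|u|^q$, and if $u\equiv0$ the displayed identity for $I_\la$ on $\mc N_\la$ would force $\liminf I_\la(u_n)\ge0>c$, so $u\neq0$. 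For the compactness I would set $v_n=u_n-u\rp0$ and apply the Brezis--Lieb lemma to $\|\na\cdot\|_{L^2}^2$, to the $f$--term and (in the form established in the Choquard literature, e.g.\ \cite{yang}) to the nonlocal term; the weak limit $u$ solves the ``frozen'' equation obtained by replacing $\|\na u_n\|_{L^2}^{2(\theta-1)}$ by its limit, and comparing $\ld I_\la'(u_n),u_n\rd=0$ with that equation yields, with $B:=\lim\|\na v_n\|_{L^2}^2$, a defect identity $B\big(a+\e^p(\|\na u\|_{L^2}^2+B)^{\theta-1}\big)=\lim\int\!\!\int\frac{|v_n(x)|^{2^*_\mu}|v_n(y)|^{2^*_\mu}}{|x-y|^\mu}$; since the right--hand side is $\le C_{HLS}S^{-2^*_\mu}B^{2^*_\mu}$, either $B=0$ or $B\ge B_0>0$ for an $\e$--independent $B_0$, and in the latter case a direct computation gives $c\ge a(\tfrac12-\tfrac1{2\cdot2^*_\mu})B_0-C_2\la>0$ for $\la$ small, contradicting $c<0$. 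Hence $B=0$, i.e.\ $u_n\to u$ strongly; then $u\in\mc N_\la$, $I_\la(u)=c<0$, $u$ is a nontrivial critical point of $I_\la$, and replacing $u$ by $|u|$ (which lies in $\mc N_\la$ with the same energy, hence is again a minimiser and, since $\mc N_\la^0=\emptyset$, a weak solution of $(P_\la)$) the strong maximum principle for the uniformly elliptic operator $-(a+\e^p\|\na u\|_{L^2}^{2(\theta-1)})\De$ yields $u>0$ in $\Om$.

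The main obstacle is twofold. First, the threshold $\La^*$ must be made \emph{uniform in $\e$}, which is exactly why the two gradient contributions have to be tracked separately throughout the $\mc N_\la^0=\emptyset$ estimates (and why one drops, rather than uses, the $\e$--terms at the right moments). Second, one must handle simultaneously the two quantities that fail to be weakly continuous: the critical nonlocal nonlinearity \emph{and} the nonlocal Kirchhoff coefficient $\|\na u\|_{L^2}^{2(\theta-1)}$ (for $\theta>1$). Because of the latter, the weak limit of a Palais--Smale sequence a priori solves only a frozen equation; the Brezis--Lieb bookkeeping above, powered by the negativity of $c$ and the smallness of $\la$, is what removes the defect $B$ and restores compactness. (The finer asymptotic analysis of Lemmas \ref{khlem9}--\ref{khlem10}, needed to locate the critical level for the second solution, does not enter here.)
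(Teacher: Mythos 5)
Your proposal is correct, and its overall architecture --- Nehari decomposition, an $\e$-uniform proof that $\mc N_\la^0=\emptyset$, negativity of the infimum on the Nehari manifold, Ekeland's principle, compactness below a threshold, and the maximum principle --- coincides with the paper's (Lemmas \ref{khlem2}, \ref{khlem1}, \ref{khlem5}, \ref{khlem3} and Propositions \ref{khprop2}, \ref{khprop1}). The one genuinely different ingredient is the compactness step. The paper proves Proposition \ref{khprop2} via the second concentration-compactness principle adapted to the Choquard term: it introduces concentration measures $\omega,\nu$ with atoms $w_i,v_i$, shows $aw_i\le v_i$ by testing with cut-offs $\varphi_{\e,i}$, and combines this with $S_{H,L}v_i^{1/2^*_\mu}\le w_i$ to force each atom to carry energy at least $\frac{N-\mu+2}{2(2N-\mu)}(aS_{H,L})^{\frac{2N-\mu}{N-\mu+2}}$, incompatible with $c<c_\infty$. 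You instead run a global Brezis--Lieb defect analysis: the single number $B=\lim\|\na(u_n-u)\|_{L^2}^2$ satisfies $\bigl(a+\e^p(\|\na u\|_{L^2}^2+B)^{\theta-1}\bigr)B\le S_{H,L}^{-2^*_\mu}B^{2^*_\mu}$, hence $B=0$ or $B\ge(aS_{H,L}^{2^*_\mu})^{1/(2^*_\mu-1)}$, and the latter contradicts $c<0$. Both arguments hinge on the same quantization inequality and both treat the Kirchhoff coefficient by passing to the limit of $\|\na u_n\|_{L^2}^{2(\theta-1)}$ along the whole sequence; your route needs the Brezis--Lieb lemma for the Riesz-potential term (available in \cite{yang}) but avoids the cut-off and measure machinery, while the paper's version is the one reused verbatim for the second solution, where the precise level $c_\infty$ (not merely $0$) matters. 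Two small points to tidy: after Young's inequality on the $f$-term your lower bound in the dichotomy should read $c\ge a(\tfrac12-\tfrac1{2\cdot2^*_\mu})B_0-C_2\la^{2/(2-q)}$ rather than $-C_2\la$ --- harmless, since either way it is positive for small $\la$, uniformly in $\e$ --- and the final appeal to the strong maximum principle requires first establishing $L^\infty$ and $C^2$ regularity of the minimizer, which the paper obtains from \cite[Lemma 4.4]{yangjmaa}.
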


\begin{Theorem}\label{khthm2}
	  Let $1<q< 2$ then there exist $\Upsilon^* ,\; \Upsilon^{**} >0 $ and $\e^*,\; \e^{**}>0$ such that 
	\begin{enumerate}
		\item [(i)] if $\mu<\min \{4,N\},\; \la \in (0, \Upsilon^*)$ and $\e \in (0, \e^*)$, then  $(P_\la)$ admits at least two positive solutions. 
	\item [(ii)] if $\mu\geq \min \{4,N\}, \; \la \in (0, \Upsilon^{**})$, $\e \in (0, \e^{**})$ and $\frac{N}{N-2}\leq q<2$, then  $(P_\la)$ admits at least two positive solutions. 
	\end{enumerate}
\end{Theorem}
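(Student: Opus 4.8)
The plan is to produce the two solutions as constrained minimizers on the two pieces of the Nehari manifold attached to $(P_\la)$, the second (and harder) solution being obtained from a Palais--Smale sequence whose level must be pushed below the compactness threshold. Write
\[
I_\la(u)=\frac{a}{2}\|\na u\|_{L^2}^2+\frac{\e^p}{2\theta}\|\na u\|_{L^2}^{2\theta}-\frac{\la}{q}\int_\Om f(x)|u|^q\,dx-\frac{1}{2\cdot 2^*_\mu}\int_\Om\int_\Om\frac{|u(x)|^{2^*_\mu}|u(y)|^{2^*_\mu}}{|x-y|^\mu}\,dx\,dy
\]
on $H_0^1(\Om)$, and let $\mc N_\la=\{u\ne 0:\ld I_\la'(u),u\rd=0\}$, decomposed as $\mc N_\la^+\cup\mc N_\la^0\cup\mc N_\la^-$ according to the sign of $\frac{d^2}{dt^2}\big|_{t=1}I_\la(tu)$. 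Since $1<q<2\le 2\theta<2\cdot 2^*_\mu$, a standard fibering-map analysis gives, after shrinking the $\la$-interval, $\mc N_\la^0=\{0\}$, at most two projections of each $u\ne 0$ onto $\mc N_\la$, coercivity and boundedness below of $I_\la$ on $\mc N_\la$, and $c_1:=\inf_{\mc N_\la^+}I_\la<0<\inf_{\mc N_\la^-}I_\la$; in particular $\inf_{\mc N_\la}I_\la=c_1$. Theorem \ref{khthm1} supplies the first positive solution $u_1$ realizing $c_1$ (one may pass to $|u_1|$ and invoke the strong maximum principle, since $I_\la$, $\mc N_\la$ and its splitting see $u$ only through $\|\na u\|_{L^2}$, $\int_\Om f|u|^q$ and the Choquard integral).

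For the second solution I would minimize $I_\la$ over $\mc N_\la^-$; put $c_2:=\inf_{\mc N_\la^-}I_\la$. Ekeland's variational principle on $\mc N_\la^-$ yields a bounded sequence $(u_n)\subset\mc N_\la^-$ with $I_\la(u_n)\to c_2$ and $I_\la'(u_n)\to0$ in $H^{-1}(\Om)$. The key compactness step, which I would establish using the Hardy--Littlewood--Sobolev inequality \eqref{co9}, a Brezis--Lieb splitting of the nonlocal quadratic form, and the Concentration--Compactness Lemma, is that, provided $\e$ is small, $I_\la$ satisfies $(PS)_c$ for every
\[
c<c_1+c^*,\qquad c^*:=\frac{N+2-\mu}{2(2N-\mu)}\big(aS_{H,L}\big)^{\frac{2N-\mu}{N+2-\mu}},
\]
where $S_{H,L}$ is the best constant in $\|\na u\|_{L^2}^2\ge S_{H,L}\big(\int_\Om\int_\Om|u(x)|^{2^*_\mu}|u(y)|^{2^*_\mu}|x-y|^{-\mu}\,dx\,dy\big)^{1/2^*_\mu}$ (the Kirchhoff term, being $O(\e^p)$, perturbs the energy of an escaping bubble only by a quantity that vanishes as $\e\to0$, which is one reason $\e$ must be small). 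Granting this, once the strict inequality $c_2<c_1+c^*$ is known, $(u_n)$ is precompact and produces $u_2\in\mc N_\la^-$ with $I_\la(u_2)=c_2$; as before $u_2$ may be taken positive, and $u_2\ne u_1$ since $\mc N_\la^+\cap\mc N_\la^-=\emptyset$.

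The crux is therefore the energy estimate $c_2<c_1+c^*$. I would test with $w_{t,\de}=u_1+t\,U_\de$, where $U_\de$ are truncated, rescaled extremals for $S_{H,L}$ (of Aubin--Talenti type) concentrating at an interior point $x_0\in\Om$ with $u_1(x_0)>0$, project $w_{t,\de}$ onto $\mc N_\la^-$, and bound $I_\la(w_{t,\de})$ from above. The Dirichlet and local terms are controlled as in Brezis--Nirenberg; the subtle object is the nonlocal critical energy
\[
\int_\Om\int_\Om\frac{\big|u_1(x)+tU_\de(x)\big|^{2^*_\mu}\,\big|u_1(y)+tU_\de(y)\big|^{2^*_\mu}}{|x-y|^\mu}\,dx\,dy,
\]
which must be split into the pure bubble part (producing $c^*$), the pure $u_1$ part (producing $c_1$ through the equation for $u_1$), and strictly positive mixed terms — here $u_1>0$ is essential — that have to dominate every remainder. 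This is exactly where Lemmas \ref{khlem9}--\ref{khlem10} enter: when $2^*_\mu\in(2,3)$ the exponents $2^*_\mu-1$ and $2^*_\mu-2$ lie in $(0,1)$, so naive Taylor expansions fail and one must use elementary inequalities such as $(a+b)^s\le a^s+sa^{s-1}b+b^s$ and $(a+b)^s\ge a^s+b^s$, carefully weighing the gain $\sim t^{2^*_\mu}\int_\Om\int_\Om u_1(x)^{2^*_\mu}U_\de(y)^{2^*_\mu}|x-y|^{-\mu}\,dx\,dy$ against the loss $\sim\la\int_\Om fU_\de^q\,dx$ and the subcritical remainders. The rates at which these quantities decay in $\de$ are governed by the size of $\mu$ relative to $4$ and $N$, which is precisely why the argument splits into $\mu<\min\{4,N\}$ and $\mu\ge\min\{4,N\}$, the extra hypothesis $q\ge\frac{N}{N-2}$ in case (ii) being exactly what makes $\la\int_\Om fU_\de^q\,dx$ of strictly lower order than the gain in the slow-decay regime. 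Choosing $\de$ small, then $\e$ small and $\la$ small — linking $\la$ to $\e$ by a quantitative relation so that the Kirchhoff perturbation and the concave term are absorbed — then yields $c_2<c_1+c^*$, with the thresholds $\Upsilon^*,\e^*$ in case (i) and $\Upsilon^{**},\e^{**}$ in case (ii). I expect this asymptotic analysis of the nonlocal term in the regime $2^*_\mu\in(2,3)$, together with the bookkeeping of the three small parameters $\de,\e,\la$, to be the main obstacle.
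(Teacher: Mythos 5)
Your overall architecture (Nehari splitting, first solution from Theorem \ref{khthm1}, Ekeland on $N_\la^-$, compactness below a threshold, energy estimate via a translated bubble) matches the paper, and for case (i) ($\mu<\min\{4,N\}$) your plan is essentially the paper's Lemma \ref{khlem9}: test with $u_1+tu_\e$, expand the nonlocal term with Brezis--Lieb-type elementary inequalities when $2^*_\mu\in(2,3)$, and let the mixed term $t^{2\cdot 2^*_\mu-1}\int\int u_\e(x)^{2^*_\mu}u_\e(y)^{2^*_\mu-1}u_1(y)|x-y|^{-\mu}\,dx\,dy\gtrsim \e^{(N-2)/2}$ beat the $O(\e^{N-2})$ and Kirchhoff remainders after coupling $\e$ to $\la$. (Your threshold $c_1+c^*$ differs cosmetically from the paper's $c_\infty=c^*-\widehat{D}\la^{2/(2-q)}$, which is obtained by estimating $J_\la-\frac{1}{2\theta}\langle J_\la',\cdot\rangle$ with Young's inequality rather than by identifying the weak limit's energy with $c_1$; both are workable.)

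There is, however, a genuine gap in your treatment of case (ii). First, the reason the paper splits into two cases is not merely a matter of decay rates: the lower expansion of $\|u_1+tu_\e\|_{NL}^{2\cdot2^*_\mu}$ that produces the favourable mixed term relies on inequalities for $(a+b)^{2^*_\mu}$ whose error terms are only shown to be $o(\e^{(N-2)/2})$ when one can choose $\Theta\in(2/2^*_\mu,1)$, which forces $2^*_\mu>2$, i.e.\ $\mu<4$. For $\mu\ge\min\{4,N\}$ this mechanism is unavailable, and the paper abandons the test function $u_1+tu_\e$ altogether: Lemma \ref{khlem10} tests with the pure bubble $ru_\e$ and extracts the decisive negative contribution from the concave term $-\frac{\la}{q}\int_\Om f|u_\e|^q\,dx\lesssim -\la m_f\,\e^{N-\frac{N-2}{2}q}$ (with a logarithm at $q=\frac{N}{N-2}$). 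Second, and relatedly, you have the role of the hypothesis $q\ge\frac{N}{N-2}$ backwards: you describe $\la\int_\Om fU_\de^q\,dx$ as a ``loss'' to be made of lower order than the gain, whereas in the paper it \emph{is} the gain (the bubble is centred where $f>0$), and $q\ge\frac{N}{N-2}$ is exactly what makes $1+\frac{2}{(2-q)(N-2)}\bigl(N-\frac{N-2}{2}q\bigr)\le\frac{2}{2-q}$, i.e.\ makes this term dominate the $O(\e^{N-2})$ and Kirchhoff errors after setting $\e=\la^{\frac{2}{(2-q)(N-2)}}$. As written, your uniform strategy would stall precisely in the regime $2^*_\mu\le 2$ that case (ii) is designed to handle; you would need to switch test functions and invert the bookkeeping of the concave term there. (A last small point: once you use $ru_\e$, you still need $\theta_\la^-\le\sup_{r\ge0}J_\la(ru_\e)$, which follows since some $r_2u_\e\in N_\la^-$; and $N_\la^0=\emptyset$, not $\{0\}$, since $0\notin N_\la$.)
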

\begin{Theorem}\label{khthm3}
	Let $q=2$. Then there exists  $\tilde{\e}>0$ such that for any $\la \in(0,\;aS\|f\|_{L^r}^{-1}) $ and $\e \in (0,\tilde{\e})$, problem $(P_\la)$ has a positive solution.
\end{Theorem}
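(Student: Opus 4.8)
The plan is to obtain the solution as a mountain-pass critical point of the energy functional of $(P_\la)$ at $q=2$,
\[
I_\la(u)=\frac a2\|\na u\|_{L^2}^2+\frac{\e^p}{2\theta}\|\na u\|_{L^2}^{2\theta}-\frac\la2\int_\Om f(x)u^2\,dx-\frac1{2\cdot 2^*_\mu}\int_\Om\!\int_\Om\frac{|u(x)|^{2^*_\mu}|u(y)|^{2^*_\mu}}{|x-y|^\mu}\,dx\,dy,
\]
which is $C^1$ on $H_0^1(\Om)$ by the Hardy--Littlewood--Sobolev inequality \eqref{co9}. To control the sign of the solution I would actually work with the functional obtained by replacing $u$ by $u^+$ in the two nonquadratic terms: testing the resulting Euler--Lagrange equation with $u^-$ and using $a>0$ together with $\na u^+\cdot\na u^-=0$ forces $u^-\equiv 0$, so any nontrivial critical point of the modified functional is a nonnegative solution of $(P_\la)$.

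The mountain-pass geometry is checked as follows. By H\"older with exponents $r=\tfrac N2$, $\tfrac{2^*}{2}$ and the Sobolev embedding, $\big|\int_\Om f u^2\big|\le \|f\|_{L^r}S^{-1}\|\na u\|_{L^2}^2$, so the hypothesis $\la<aS\|f\|_{L^r}^{-1}$ makes the quadratic part $\tfrac a2\|\na u\|^2-\tfrac\la2\int f u^2$ bounded below by $\tfrac\kappa2\|\na u\|^2$ for some $\kappa>0$; together with the HLS bound $\int\!\int\frac{|u|^{2^*_\mu}|u|^{2^*_\mu}}{|x-y|^\mu}\le C\|\na u\|^{2\cdot 2^*_\mu}$ and $2\cdot 2^*_\mu>2$ this gives $\rho,\al>0$ with $I_\la\ge\al$ on $\{\|\na u\|=\rho\}$. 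Since $\theta<2^*_\mu$, for any fixed $v\in H_0^1(\Om)\setminus\{0\}$ with $v\ge 0$ the dominant term of $I_\la(tv)$ as $t\to\infty$ is $-\tfrac{t^{2\cdot 2^*_\mu}}{2\cdot 2^*_\mu}\int\!\int(\cdots)\to-\infty$, so $I_\la(t_0v)<0$ for large $t_0$. The Mountain Pass Lemma then yields a Palais--Smale sequence $(u_n)$ at the minimax level $c_\la=\inf_{\gamma\in\Gamma}\max_{t\in[0,1]}I_\la(\gamma(t))\ge\al>0$; combining $I_\la(u_n)\to c_\la$ with $\langle I_\la'(u_n),u_n\rangle=o(\|u_n\|)$ and using again $\theta<2^*_\mu$ and the coercivity of the quadratic part shows that $(u_n)$ is bounded in $H_0^1(\Om)$.

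Next comes the compactness threshold: by the nonlocal Brezis--Lieb splitting of the Choquard term together with the concentration--compactness lemma --- exactly the asymptotic machinery behind the earlier results --- every $(PS)_c$ sequence of $I_\la$ is relatively compact provided $c<c^*$, where
\[
c^*=\frac{N+2-\mu}{2(2N-\mu)}\,(aS_{H,L})^{\frac{2N-\mu}{N+2-\mu}}+O(\e^p),
\]
$S_{H,L}$ being the best constant in the HLS--Sobolev inequality; the $O(\e^p)$ correction comes from the Kirchhoff term and is simply the effect of replacing $a$ by $a+\e^p$ when $\theta=1$.

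The heart of the proof is thus the strict estimate $c_\la<c^*$. I would test with a family $w_\de$, obtained by multiplying the explicit extremals of $S_{H,L}$ (Aubin--Talenti profiles, by Gao--Yang) by a cut-off supported near a point $x_0\in\Om$ with $f(x_0)>0$ --- such $x_0$ exists because $f$ is sign changing and continuous --- and estimate $\max_{t\ge0}I_\la(tw_\de)$. Inserting the standard expansions of $\|\na w_\de\|^2$ and $\int\!\int\frac{|w_\de|^{2^*_\mu}|w_\de|^{2^*_\mu}}{|x-y|^\mu}$, noting that $\int f w_\de^2$ contributes a strictly negative leading term $-\la f(x_0)\cdot(\text{positive lower-order quantity})$ near $x_0$, and bounding the Kirchhoff contribution by $\tfrac{\e^p}{2\theta}\|\na w_\de\|^{2\theta}=O(\e^p)$, one first fixes $\de$ small and then takes $\e<\tilde\e$ small so that this maximum is strictly below $c^*$. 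This dimension-sensitive balancing --- the analogue of the Brezis--Nirenberg test-function computation, made more delicate by the nonlocal exponent $2^*_\mu$ and by the need to absorb the $O(\e^p)$ Kirchhoff term --- is the main obstacle. Once it is established, the bounded $(PS)_{c_\la}$ sequence converges along a subsequence to a nontrivial critical point $u$; by the choice of functional $u\ge 0$, and since $u\not\equiv0$ solves $-(a+\e^p\|\na u\|^{2(\theta-1)})\De u=\la f u+\big(\int_\Om\frac{|u|^{2^*_\mu}}{|x-y|^\mu}dy\big)u^{2^*_\mu-1}$ with right-hand side in a suitable $L^s$, elliptic regularity together with the strong maximum principle gives $u>0$ in $\Om$.
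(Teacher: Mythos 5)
Your proposal follows essentially the same route as the paper: mountain-pass geometry from $\la< aS\|f\|_{L^r}^{-1}$, compactness of $(PS)_c$ sequences below the level $\frac{N-\mu+2}{2(2N-\mu)}(aS_{H,L})^{\frac{2N-\mu}{N-\mu+2}}$ via concentration--compactness, and a Brezis--Nirenberg type test-function estimate in which the quadratic term $\la\int_\Om f u^2$ supplies the strict inequality while the Kirchhoff term is absorbed as a higher-order perturbation for small $\e$. The only differences are organizational --- you decouple the concentration parameter of the test function from the Kirchhoff parameter $\e$ (fixing the profile first, then shrinking $\e$), whereas the paper uses the same $\e$ for both and exploits $p>N-2$; and you obtain nonnegativity by a $u^+$-truncation rather than the paper's use of $J_\la(|u|)=J_\la(u)$ together with regularity and the strong maximum principle --- and both arguments share the same implicit restriction to $N\ge 4$ in the level estimate.
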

\begin{Remark}
	We remark that  the approach used in this paper can be applied for the following critical exponent problem 
	\[
	(Q_\la)\;
	-\left(  a+\e^p \|\na u\|_{L^2}^{2\theta-2} \right)\De u 
	=\la f(x)|u|^{q-2}u+ |u|^{2^*-2}u,  \;\; \text{in}\;
	\Om,\;
	u=0 \; \text{ on } \pa \Om,
	\]
	where,    $ \e>0$ is small enough, $1<q<2$, $a, \la,p,\theta$ are positive real numbers such that $ p>N-2$ and $\theta \in [1, 2^*/2)$ and $f$ is  a  continuous  real valued  sign changing function such that  $f\in L^{\frac{2^*}{2^*-q}}(\Om)$. Using the  methodology of \cite{wu2008} and  asymptotic analysis done in Lemma \ref{khlem9}, one can show the following result:
	\begin{Theorem}
		There exist $\Upsilon^* >0 $ and $\e^*>0$ such that the equation $(Q_\la)$ admits at least two positive solutions for all $\la\in(0,\Upsilon^*)$ and $\e\in(0,\e^*).$
	\end{Theorem}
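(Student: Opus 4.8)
The plan is to adapt the Nehari--manifold method for concave--convex problems with sign-changing weights of \cite{wu2008} to the nonlocal Kirchhoff setting, using the asymptotic estimates of Lemma~\ref{khlem9} to locate the compactness level. Work in $H^1_0(\Om)$ with the functional associated with $(Q_\la)$,
\[
J_\la(u)=\frac{a}{2}\|\na u\|_{L^2}^2+\frac{\e^p}{2\theta}\|\na u\|_{L^2}^{2\theta}-\frac{\la}{q}\int_\Om f\,|u|^q\,dx-\frac{1}{2^*}\int_\Om|u|^{2^*}\,dx,
\]
and the Nehari set $\mc N_\la=\{u\in H^1_0(\Om)\setminus\{0\}:\ld J_\la'(u),u\rd=0\}$. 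For $u\neq0$ analyse the fibering map $\phi_u(t)=J_\la(tu)$; since $1<q<2$ and $2\theta<2^*$ (from $\theta<2^*/2$), $\phi_u$ has the standard concave--convex shape, and $\mc N_\la=\mc N_\la^+\cup\mc N_\la^0\cup\mc N_\la^-$ splits according to the sign of $\phi_u''(1)$. The first step is to show, following \cite{wu2008} and using that $f$ changes sign, that there is $\Upsilon_0>0$ with $\mc N_\la^0=\{0\}$ for $\la\in(0,\Upsilon_0)$ --- the monotone term $\e^p\|\na u\|_{L^2}^{2\theta}$ enters $\phi_u''(1)$ with a favourable sign and only helps --- so that $\mc N_\la$ is a $C^1$ constraint on which constrained critical points of $J_\la$ are weak solutions of $(Q_\la)$.

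For the first solution, the Sobolev and H\"older inequalities together with $q<2$ show that $J_\la$ is coercive and bounded below on $\mc N_\la$, so $\al_\la^+:=\inf_{\mc N_\la^+}J_\la$ is finite, while testing with a function supported where $f>0$ gives $\al_\la^+<0$. Ekeland's variational principle on $\mc N_\la^+$ yields a Palais--Smale sequence for $J_\la$ at the negative level $\al_\la^+$; since a bubble carries energy at least $\frac1N(aS)^{N/2}>0$ on top of the energy of the weak limit and $\al_\la^+\to0$ as $\la\to0$, for $\la$ small this sequence converges strongly to some $u_1\in\mc N_\la^+$ with $J_\la(u_1)=\al_\la^+<0$. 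Replacing $u_1$ by $|u_1|$ and using the strong maximum principle gives the first positive solution.

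For the second solution, minimise over $\mc N_\la^-$ and put $\al_\la^-:=\inf_{\mc N_\la^-}J_\la$. A Brezis--Lieb / concentration-compactness argument --- which must also track the limit of $\|\na u_n\|_{L^2}$ along a Palais--Smale sequence because of the nonlocal coefficient --- shows that $J_\la$ satisfies the Palais--Smale condition at every level strictly below $J_\la(u_1)+\frac1N(aS)^{N/2}$, up to a correction vanishing as $\e\to0$ (here $p>N-2$ and $2\theta<2^*$ are what make the Kirchhoff contribution genuinely lower-order). It remains to verify $\al_\la^-<J_\la(u_1)+\frac1N(aS)^{N/2}$: as in Lemma~\ref{khlem9} one inserts the truncated Aubin--Talenti instantons $u_\delta$ concentrating at an interior point of $\Om$ and estimates $\sup_{t\geq0}J_\la(u_1+t\,u_\delta)$, showing that for $\delta$ fixed small this supremum drops below $J_\la(u_1)+\frac1N(aS)^{N/2}$ once $\la\in(0,\Upsilon^*)$ and $\e\in(0,\e^*)$ are small with $\la$ suitably tied to $\e$. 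Ekeland's principle on $\mc N_\la^-$ then produces a minimiser $u_2\in\mc N_\la^-$, which may be taken positive; since $\mc N_\la^+\cap\mc N_\la^-=\emptyset$ we get $u_2\neq u_1$, and $\Upsilon^*$ (the smaller of $\Upsilon_0$ and the bound from this step) together with $\e^*$ are the asserted constants.

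The main obstacle is precisely this last energy estimate. Because $\theta$ may exceed $1$, the nonlocal term $\e^p\|\na u_\delta\|_{L^2}^{2\theta}$ has to be expanded carefully in powers of the concentration parameter $\delta$, and in low dimensions the balance among this term, the critical term $\int_\Om|u_\delta|^{2^*}\,dx$ and the concave term $\la\int_\Om f\,|u_\delta|^q\,dx$ is delicate; controlling it is what forces the admissible range of $\la$ to depend on $\e$, and it is the only point at which the argument genuinely departs from \cite{wu2008}.
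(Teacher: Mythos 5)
Your proposal is correct and follows essentially the route the paper itself prescribes for this statement, which is only sketched there: the Remark reduces $(Q_\la)$ to the Nehari-manifold scheme of \cite{wu2008} together with the asymptotic analysis of Lemma \ref{khlem9}, and that is exactly your program (fibering-map splitting, $N_\la^0=\emptyset$ for small $\la$, Ekeland on $N_\la^{\pm}$, concentration--compactness below a critical level, and the bubble estimate with $\la$ tied to $\e$ via $p>N-2$). The only deviation is cosmetic: you phrase the compactness threshold in the Tarantello form $J_\la(u_1)+\frac{1}{N}(aS)^{N/2}$, whereas the paper's Proposition \ref{khprop2} works with the uniform level $\frac{1}{N}(aS)^{N/2}-\widehat{D}\la^{2/(2-q)}$; both suffice here since the Nehari constraint bounds $J_\la(u_1)$ from below by $-C\la^{2/(2-q)}$.
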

	
\end{Remark}
Turing to layout of the article,  
 in section 2, we will give the variational framework, fibering map analysis and compactness of Palais-Smale sequences. In section 3, we have proved the existence of first  positive solution. In section 4, we have proved the existence of second positive solution.  In section 5, we prove the existence a positive solution when $q=2$. 

\section{Variational Framework and fibering map analysis}
Firstly we will give the variational framework of the problem $(P_\la)$. We start with the well - known  Hardy-Littlewood-Sobolev Inequality: 
\begin{Proposition}\cite{leib}
	Let $t,r>1$ and $0<\mu <N$ with $1/t+\mu/N+1/r=2$, $f\in L^t(\mathbb{R}^N)$ and $h\in L^r(\mathbb{R}^N)$. There exists a sharp constant $C(t,r,\mu,N)$ independent of $f,h$, such that 
	\begin{equation}\label{co9}
	\int_{\mathbb{R}^N}\int_{\mathbb{R}^N}\frac{f(x)h(y)}{|x-y|^{\mu}}~dxdy \leq C(t,r,\mu,N) \|f\|_{L^t} \|h\|_{L^r}.
	\end{equation}
	If $t=r=2N/(2N-\mu)$, then 
	\begin{align*}
	C(t,r,\mu,N)=C(N,\mu)= \pi^{\frac{\mu}{2}}\frac{\Gamma(\frac{N}{2}-\frac{\mu}{2})}{\Gamma(N-\frac{\mu}{2})}\left\lbrace \frac{\Gamma(\frac{N}{2})}{\Gamma(\frac{\mu}{2})}\right\rbrace^{-1+\frac{\mu}{N}}.
	\end{align*}
	Equality holds in  \eqref{co9} if and only if $f\equiv (constant)h$ and 
	\begin{align*}
	h(x)= A(\gamma^2+|x-a|^2)^{(2N-\mu)/2},
	\end{align*} 
	for some $A\in \mathbb{C}, 0\neq \gamma \in \mathbb{R}$ and $a \in \mathbb{R}^N$. \QED
\end{Proposition}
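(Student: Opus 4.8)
The stated Proposition is L. E. Lieb's sharp Hardy--Littlewood--Sobolev inequality, so a self-contained argument naturally splits into two parts: first the boundedness assertion \eqref{co9} with \emph{some} finite constant, and then, in the diagonal case $t=r=2N/(2N-\mu)$, the identification of the sharp constant and of the extremal functions. I will sketch both, although only the first is actually needed for the applications made in this paper.

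For the boundedness, the plan is to regard the left--hand side as the convolution pairing $\langle h,\ |\cdot|^{-\mu}\ast f\rangle$ and to invoke the weak Young (Young--O'Neil) inequality: the kernel $|x|^{-\mu}$ lies in the Lorentz space $L^{N/\mu,\infty}(\mathbb{R}^N)$ with $\bigl\||x|^{-\mu}\bigr\|_{L^{N/\mu,\infty}}$ explicitly computable, and the relation $1/t+\mu/N+1/r=2$ is exactly $\frac{1}{t}+\frac{\mu}{N}=1+\frac{1}{r'}$, the scaling condition for $L^{t}\ast L^{N/\mu,\infty}\hookrightarrow L^{r'}$; Hölder's inequality in Lorentz spaces then pairs this against $h\in L^{r}=L^{(r')'}$. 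Equivalently, and avoiding Lorentz spaces entirely, one may use the Riesz rearrangement inequality to replace $f,h$ by their symmetric decreasing rearrangements $f^{*},h^{*}$ (which only raises the integral and leaves all norms fixed), split either the kernel dyadically as $\sum_{k}2^{-k\mu}\mathbf 1_{\{2^{k}\le|x-y|<2^{k+1}\}}$ or the functions into their level sets, apply Hölder on each piece, and sum a geometric--type series using the behaviour of Lebesgue norms under dilation. Either route gives \eqref{co9} with a finite (non-optimal) constant, which is all the rest of the paper uses.

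For the sharp constant in the diagonal case I would follow Lieb's original route. First, Riesz rearrangement reduces the maximization to nonnegative radially decreasing $f,h$, and a concentration--compactness / strict--subadditivity argument (the functional is dilation-- and translation--invariant and strict subadditivity at the critical exponent excludes splitting and vanishing) produces a maximizing pair, which by the Euler--Lagrange equation solves $|\cdot|^{-\mu}\ast f^{p-1}=c\,f$ with $p=2N/(2N-\mu)$. Second, exploit conformal invariance: under stereographic projection $\mathbb{R}^{N}\to S^{N}$ the form $\iint\frac{f(x)h(y)}{|x-y|^{\mu}}\,dx\,dy$ transforms covariantly with the $L^{p}$ norms, so the problem descends to $S^{N}$, where the full conformal group acts. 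Third, run the competing--symmetries argument: alternate two $L^{p}$--norm--preserving operations that each do not decrease the functional --- spherical symmetric--decreasing rearrangement about the north pole, and conjugation by a fixed conformal map not commuting with it --- and observe that the iterates of their composition converge in $L^{p}$ to a function fixed by both, hence constant on $S^{N}$; pulled back to $\mathbb{R}^{N}$ this constant becomes $h(x)=A(\gamma^{2}+|x-a|^{2})^{-(2N-\mu)/2}$ (the displayed formula has an evident sign typo in the exponent --- it must be a negative power for $h\in L^{p}$). Fourth, insert this explicit extremal into both sides: the remaining integrals are Beta integrals, and collecting $\Gamma$--factors yields precisely $C(N,\mu)=\pi^{\mu/2}\,\Gamma(\tfrac{N}{2}-\tfrac{\mu}{2})/\Gamma(N-\tfrac{\mu}{2})\,\bigl\{\Gamma(\tfrac{N}{2})/\Gamma(\tfrac{\mu}{2})\bigr\}^{-1+\mu/N}$.

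The main obstacle is the third step. Rearrangement alone cannot single out the maximizer, because the maximizing \emph{set} is an entire conformal orbit; one genuinely needs the interplay of the two non--commuting symmetrizations (or, alternatively, a moving--plane/classification analysis of the positive solutions of the conformally invariant dual integral equation). The $\Gamma$--function bookkeeping of the fourth step is routine but error--prone. If, as here, only \eqref{co9} with a non--sharp constant is required, Steps one through four are unnecessary and one keeps solely the dyadic / weak--Young argument of the second paragraph.
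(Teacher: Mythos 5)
The paper does not prove this Proposition at all: it is quoted verbatim as a known result with a citation to Lieb--Loss, and the ``proof'' is just the end-of-statement box. So there is nothing in the paper to compare your argument against step by step. What you have written is an accurate roadmap of the classical proof: weak Young / Lorentz-space convolution (or the rearrangement-plus-dyadic-decomposition alternative) for boundedness with a non-sharp constant, and Lieb's programme --- Riesz rearrangement, existence of a maximizer, conformal invariance via stereographic projection, and the competing-symmetries iteration --- for the sharp constant and the classification of extremals in the diagonal case. You correctly identify that only the non-sharp bound is ever used in the body of the paper, and your observation that the displayed extremal should read $h(x)=A(\gamma^{2}+|x-a|^{2})^{-(2N-\mu)/2}$ (negative exponent, otherwise $h\notin L^{2N/(2N-\mu)}$) is a genuine typo in the paper's statement worth flagging. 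The one caveat is that your write-up is a sketch, not a proof: the existence of a maximizer (ruling out vanishing and dichotomy under the non-compact dilation and translation symmetries) and the convergence of the competing-symmetries iteration are each substantial arguments that you name but do not carry out, and the Beta-integral computation producing $C(N,\mu)$ is asserted rather than performed. For the purposes of this paper that is entirely adequate, since the authors themselves delegate the whole statement to the literature.
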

\noi The best constant for the embedding $D^{1,2}(\mathbb{R}^N)$ into  $L^{2^*}(\mathbb{R}^N)$ is defined as 
\[S=\inf_{ u \in D^{1,2}(\mathbb{R}^N) \setminus \{0\}} \left\{ \int_{\mathbb{R}^N} |\na u|^2 dx:\; \int_{\mathbb{R}^N}|u|^{2^{*}}dx=1\right\}.\]
Consequently, we define 
\begin{equation}\label{nh5}
S_{H,L}= \inf_{ u \in D^{1,2}(\mathbb{R}^N)\setminus \{0\}} \left\{  \int_{\mathbb{R}^N}|\nabla u|^2 dx:\;  \int_{\mathbb{R}^{N}} \int_{\mathbb{R}^N } \frac{|u(x)|^{2^{*}_{\mu}}|u(y)|^{2^{*}_{\mu}}}{|x-y|^{\mu}}~dxdy=1 \right\}. 
\end{equation}

\begin{Lemma}\label{khlem7}
	\cite{yang}
	The constant $S_{H,L}$ defined in \eqref{nh5} is achieved if and only if 
	\begin{align*}
	u=C\left(\frac{b}{b^2+|x-a|^2}\right)^{\frac{N-2}{2}}
	\end{align*} 
	where $C>0$ is a fixed constant, $a\in \mathbb{R}^N$ and $b\in (0,\infty)$ are parameters. Moreover,
	\begin{align*} 
S=	S_{H,L} \left[C(N,\mu)\right]^{\frac{N-2}{2N -\mu}}.
	\end{align*}
\end{Lemma}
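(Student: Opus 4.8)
The plan is to compute $S_{H,L}$ by squeezing it between the Sobolev constant $S$ and the Hardy--Littlewood--Sobolev constant $C(N,\mu)$, exploiting that the extremal ``bubbles'' for these two inequalities coincide. Everything rests on two elementary identities for the exponents: $2^{*}_{\mu}\cdot\frac{2N}{2N-\mu}=2^{*}$ and $\frac{N-2}{2N-\mu}=\frac{1}{2^{*}_{\mu}}$. In particular the asserted formula $S=S_{H,L}\,[C(N,\mu)]^{(N-2)/(2N-\mu)}$ is equivalent to $S_{H,L}=S\,[C(N,\mu)]^{-1/2^{*}_{\mu}}$, so it suffices to prove this identity together with the fact that the infimum in \eqref{nh5} is attained precisely on the Talenti--Aubin family.

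For the inequality $S_{H,L}\ge S\,[C(N,\mu)]^{-1/2^{*}_{\mu}}$, take $u\in D^{1,2}(\mathbb{R}^N)\setminus\{0\}$ and apply \eqref{co9} with $t=r=\frac{2N}{2N-\mu}$ and $f=h=|u|^{2^{*}_{\mu}}$; by the first exponent identity, $|u|^{2^{*}_{\mu}}\in L^{2N/(2N-\mu)}$ exactly when $u\in L^{2^{*}}$, and \eqref{co9} reads
\[
\int_{\mathbb{R}^N}\int_{\mathbb{R}^N}\frac{|u(x)|^{2^{*}_{\mu}}|u(y)|^{2^{*}_{\mu}}}{|x-y|^{\mu}}\,dx\,dy\ \le\ C(N,\mu)\,\|u\|_{L^{2^{*}}}^{2\cdot 2^{*}_{\mu}}.
\]
If $u$ is admissible in \eqref{nh5}, i.e. the double integral equals $1$, this forces $\|u\|_{L^{2^{*}}}^{2}\ge [C(N,\mu)]^{-1/2^{*}_{\mu}}$; combining with $\|\nabla u\|_{L^2}^{2}\ge S\|u\|_{L^{2^{*}}}^{2}$ (the definition of $S$) and taking the infimum gives the bound.

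Next I would show the bound is sharp and identify the minimizers. The key observation is that the Talenti--Aubin functions $U_{a,b}(x)=C\big(b/(b^{2}+|x-a|^{2})\big)^{(N-2)/2}$, which attain equality in the Sobolev inequality, also attain equality in \eqref{co9}: with $f=h=|U_{a,b}|^{2^{*}_{\mu}}$ one computes $|U_{a,b}|^{2^{*}_{\mu}}=\mathrm{const}\cdot(b^{2}+|x-a|^{2})^{-(2N-\mu)/2}$, which is precisely the profile saturating \eqref{co9}. Feeding a suitable multiple of $U_{a,b}$ (normalized so that the double integral equals $1$) into \eqref{nh5} therefore returns the value $S\,[C(N,\mu)]^{-1/2^{*}_{\mu}}$, so $S_{H,L}\le S\,[C(N,\mu)]^{-1/2^{*}_{\mu}}$ and this family is minimizing. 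For the ``only if'' direction, suppose $u$ attains $S_{H,L}$ with the double integral normalized to $1$; then the chain $\|\nabla u\|_{L^2}^{2}\ge S\|u\|_{L^{2^{*}}}^{2}\ge S\,[C(N,\mu)]^{-1/2^{*}_{\mu}}$ must collapse to equalities, so in particular $\|\nabla u\|_{L^2}^{2}=S\|u\|_{L^{2^{*}}}^{2}$. After replacing $u$ by $|u|$ (which does not increase the Dirichlet energy and leaves the double integral unchanged), $u$ is an extremal for the Sobolev constant, hence of the stated form by the Talenti--Aubin classification. Rearranging the identity gives $S=S_{H,L}\,[C(N,\mu)]^{(N-2)/(2N-\mu)}$.

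The step I expect to be the real crux is the rigidity input rather than any estimate: one must know that equality in \eqref{co9} with $f=h=|u|^{2^{*}_{\mu}}$ forces $|u|$ onto the one-parameter family of bubbles (up to translation and dilation) that already saturates the Sobolev inequality, so that a single family is simultaneously extremal for both problems. Once this is granted, the rest is exponent bookkeeping and checking that the constant $C(N,\mu)$ appearing in the equality case of \eqref{co9} is literally the one occurring in \eqref{nh5} --- exactly the computation recorded in \cite{yang}, which is what makes the clean relation between $S$ and $S_{H,L}$ hold.
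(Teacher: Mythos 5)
Your argument is correct and is essentially the standard proof of this lemma from the cited reference \cite{yang}; the paper itself offers no proof beyond that citation. The squeeze $S_{H,L}\ge S\,[C(N,\mu)]^{-1/2^{*}_{\mu}}$ via H\"older/HLS plus the observation that the Talenti--Aubin bubbles simultaneously saturate both inequalities (so the chain collapses to equality exactly on that family, and $\tfrac{N-2}{2N-\mu}=\tfrac{1}{2^{*}_{\mu}}$ converts the identity into the stated form) is precisely the argument recorded there.
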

\begin{Lemma}\cite{yang}
	For $N\geq 3$ and $0<\mu<N$. Then 
	\begin{align*}
	\|.\|_{NL}:= \left(\ds \int_{\mathbb{R}^N}\int_{\mathbb{R}^N}\frac{|.|^{2^{*}_{\mu}}|.|^{2^{*}_{\mu}}}{|x-y|^{\mu}}~dxdy\right)^{\frac{1}{2.2^{*}_{\mu}}}
	\end{align*}
	defines a norm on $L^{2^*}(\mb R^N)$.
\end{Lemma}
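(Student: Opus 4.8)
The plan is to verify the four defining properties of a norm; three of them are immediate and only the triangle inequality requires work. For finiteness I would note that if $u\in L^{2^*}(\mb R^N)$ then $|u|^{2^*_\mu}\in L^t(\mb R^N)$ with $t=\frac{2N}{2N-\mu}$ (since $2^*_\mu\, t=2^*$), and that $\frac1t+\frac{\mu}{N}+\frac1t=2$, so the Hardy--Littlewood--Sobolev inequality \eqref{co9} applied with $f=h=|u|^{2^*_\mu}$ gives
\[
\int_{\mb R^N}\int_{\mb R^N}\frac{|u(x)|^{2^*_\mu}|u(y)|^{2^*_\mu}}{|x-y|^{\mu}}\,dx\,dy\le C(N,\mu)\,\|u\|_{L^{2^*}}^{2\cdot 2^*_\mu}<\infty ,
\]
hence $\|u\|_{NL}\le C(N,\mu)^{1/(2\cdot2^*_\mu)}\|u\|_{L^{2^*}}$. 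Non-negativity is clear; if $\|u\|_{NL}=0$ the double integral vanishes, and since $|x-y|^{-\mu}>0$ for a.e. $(x,y)$ this forces $|u(x)|^{2^*_\mu}|u(y)|^{2^*_\mu}=0$ a.e., so $u=0$ a.e. by Fubini. Homogeneity $\|\lambda u\|_{NL}=|\lambda|\,\|u\|_{NL}$ is immediate from the scaling of the integrand.

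For the triangle inequality I would first isolate the symmetric bilinear form
\[
D(g_1,g_2):=\int_{\mb R^N}\int_{\mb R^N}\frac{g_1(x)\,g_2(y)}{|x-y|^{\mu}}\,dx\,dy ,
\]
finite by \eqref{co9} for non-negative $g_1,g_2\in L^{t}(\mb R^N)$. The key structural fact is that $D$ is positive semi-definite: using the Riesz composition identity $|x-y|^{-\mu}=c_{N,\mu}\int_{\mb R^N}|x-z|^{-(N+\mu)/2}\,|y-z|^{-(N+\mu)/2}\,dz$ (valid for $0<\mu<N$) together with Tonelli's theorem, one gets $D(g,g)=c_{N,\mu}\,\big\|\,|\cdot|^{-(N+\mu)/2}\!* g\,\big\|_{L^2}^2\ge0$ — equivalently, $\widehat{|x|^{-\mu}}$ is a positive multiple of $|\xi|^{\mu-N}$. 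From positive semi-definiteness I would deduce the Cauchy--Schwarz inequality $D(g_1,g_2)\le D(g_1,g_1)^{1/2}D(g_2,g_2)^{1/2}$, so that $\Phi(g):=D(g,g)^{1/2}$ is a seminorm on the cone of non-negative $L^t$ functions; moreover $\Phi$ is monotone there, since $0\le g_1\le g_2$ gives $D(g_2,g_2)-D(g_1,g_1)=D(g_2-g_1,\,g_2+g_1)\ge0$ (the kernel is non-negative).

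Then, for $u,v\in L^{2^*}(\mb R^N)$ and $p:=2^*_\mu$ (note $p>\frac{N}{N-2}\ge1$), I would use $|u+v|\le|u|+|v|$ and the monotonicity of $\Phi$ to write $\|u+v\|_{NL}^{\,p}=\Phi\big(|u+v|^{p}\big)\le\Phi\big((|u|+|v|)^{p}\big)$, reducing matters to the inequality $\Phi\big((f_1+f_2)^{p}\big)^{1/p}\le\Phi(f_1^{p})^{1/p}+\Phi(f_2^{p})^{1/p}$ for non-negative $f_1,f_2$ (then apply it with $f_1=|u|$, $f_2=|v|$). To prove this last inequality I would normalize: set $c_i:=\Phi(f_i^{p})^{1/p}$ (the case $c_1c_2=0$ being trivial) and $\theta:=c_1/(c_1+c_2)$, so that convexity of $t\mapsto t^{p}$ gives
\[
(f_1+f_2)^{p}=(c_1+c_2)^{p}\Big(\theta\tfrac{f_1}{c_1}+(1-\theta)\tfrac{f_2}{c_2}\Big)^{p}\le(c_1+c_2)^{p}\Big(\theta\big(\tfrac{f_1}{c_1}\big)^{p}+(1-\theta)\big(\tfrac{f_2}{c_2}\big)^{p}\Big),
\]
and applying the monotone seminorm $\Phi$ (monotonicity, then subadditivity and homogeneity, and finally $\Phi(f_i^p)=c_i^p$) yields $\Phi\big((f_1+f_2)^{p}\big)\le(c_1+c_2)^{p}\big(\theta+(1-\theta)\big)=(c_1+c_2)^{p}$, which is the claim.

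I expect the only delicate point to be the positive semi-definiteness of $D$ — that is, the Riesz composition identity, equivalently the sign of $\widehat{|x|^{-\mu}}$ — since everything else reduces to Hardy--Littlewood--Sobolev, Tonelli/Fubini, and the elementary convexity argument above. As stated, this lemma is quoted from \cite{yang}, so within the present paper the proof is simply a reference.
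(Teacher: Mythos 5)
Your sketch is correct. The paper itself offers no argument for this lemma --- it is quoted from \cite{yang} and the citation is the entire ``proof'' --- so there is nothing in the text to compare against step by step; what you have written is essentially the standard argument behind the cited result. The decomposition into finiteness (Hardy--Littlewood--Sobolev with $t=\tfrac{2N}{2N-\mu}$, so that $2^*_\mu t=2^*$), definiteness, homogeneity, and a Minkowski-type triangle inequality is the right one, and you correctly identify the one genuinely nontrivial ingredient: the positive semi-definiteness of the bilinear form $D$, which comes from the Riesz composition identity (equivalently, the positivity of the Fourier transform of $|x|^{-\mu}$) and yields the Cauchy--Schwarz inequality $D(g_1,g_2)\le D(g_1,g_1)^{1/2}D(g_2,g_2)^{1/2}$. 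The remaining reduction --- monotonicity of $\Phi$ on the nonnegative cone via $D(g_2-g_1,g_2+g_1)\ge 0$, then the normalization $\theta=c_1/(c_1+c_2)$ and convexity of $t\mapsto t^{p}$ with $p=2^*_\mu>1$ --- is sound and closes the triangle inequality. I see no gap; if anything, you could note that the subadditivity of $\Phi$ used in the last display is exactly the Cauchy--Schwarz consequence you established earlier, so the argument is self-contained once the Riesz composition formula is granted.
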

The energy functional associated with the problem $(P_\la)$ is $J_\la:H^1_0(\Om) \ra \mathbb{R} $ defined as 
\begin{equation*}
J_\la(u) = \frac{a}{2}\|u\|^2+\frac{\e^p}{2\theta} \|u\|^{2\theta}  - \frac{1}{q} \int_{\Om}f(x)|u|^q ~dx- \frac{1}{2.2^*_{\mu}} \int_{\Om}\int_{\Om} \frac{|u(x)|^{2^{*}_{\mu}}|u(y)|^{2^{*}_{\mu}}}{|x-y|^{\mu}}~dxdy. 
\end{equation*}
 By using Hardy-Littlewood-Sobolev inequality \eqref{co9},  we have 
\begin{align*}
\ds \int_{\Om}\int_{\Om}\frac{|u(x)|^{2^{*}_{\mu}}|u(y)|^{2^{*}_{\mu}}}{|x-y|^{\mu}}~dxdy \leq C(N,\mu)\|u\|_{L^{2^*}}^{2.2^{*}_{\mu}}. 
\end{align*}
It implies the functional $J_\la \in C^1(H_0^{1}(\Om), \mathbb{R})$. Moreover, 
\begin{align*}
\langle J_\la^{\prime}(u), v \rangle  =& (a + \e^p \|u\|^{2(\theta-1)}) \int_{\Om}\na u \cdot \na v~dx - \int_{\Om}f(x)| u|^{q-2}u v~dx\\
& - \int_{\Om}\int_{\Om} \frac{|u(x)|^{2^{*}_{\mu}}|u(y)|^{2^{*}_{\mu}-2}u(y)v(y)}{|x-y|^{\mu}}~dxdy \text{  for all } v \in H^1_0(\Om). 
\end{align*}
\noi  To study the critical points of the problem $(P_\la)$, we consider the   Nehari manifold
\begin{align*}
N_\la:=\{u \in H_0^1(\Om)\setminus\{0\}\; |\;  \ld J_\la^\prime(u),u\rd =0\},
\end{align*}
where   $\ld\;, \;\rd $ denotes  the usual duality.  
Since $ N_\la$ contains every non-zero solution of $(P_\la)$ and we know that the Nehari manifold is closely
related to the behavior of the  fibering maps $\phi_u:\mathbb{R}\rightarrow \mathbb{R}$ as  $\phi_u(t)=  J_{\la}(tu)$,  for $u\in H^1_0(\Om)$. 
It implies $tu\in N_{\la}$ if and only if
$\phi_{u}^{\prime}(t)=0$ and in particular, $u\in N_{\la}$ if
and only if $\phi_{u}^{\prime}(1)=0$. Hence, it is natural to split
$N_{\la}$ into three parts corresponding to the points of local minima,
local maxima and the points of inflection, namely
\begin{align*}
N_{\la}^{0}:= \left\{u\in N_{\la}:\phi_{u}^{\prime\prime}(1) = 0\right\}, N_{\la}^{+}:= \left\{u\in N_{\la}:
\phi_{u}^{\prime\prime}(1)>0\right\},\; N_{\la}^{-}:= \left\{u\in N_{\la}:
\phi_{u}^{\prime\prime}(1)
<0\right\}.
\end{align*}
 \begin{Lemma}\label{khlem2}
 	$J_\la$ is coercive and bounded below on $N_\la$.
 \end{Lemma}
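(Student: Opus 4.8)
The plan is to show coercivity by using the defining constraint $\langle J_\la'(u),u\rangle=0$ to eliminate the (non-coercive, higher-order) Choquard term from the expression for $J_\la(u)$, leaving a lower bound that grows like $\|u\|^2$. Concretely, for $u\in N_\la$ we have
\[
a\|u\|^2+\e^p\|u\|^{2\theta}=\int_{\Om}f(x)|u|^q\,dx+\int_{\Om}\int_{\Om}\frac{|u(x)|^{2^*_\mu}|u(y)|^{2^*_\mu}}{|x-y|^{\mu}}\,dxdy,
\]
so substituting the double integral into $J_\la(u)$ gives
\[
J_\la(u)=\Big(\frac12-\frac{1}{2\cdot2^*_\mu}\Big)a\|u\|^2+\Big(\frac{1}{2\theta}-\frac{1}{2\cdot2^*_\mu}\Big)\e^p\|u\|^{2\theta}
-\Big(\frac1q-\frac{1}{2\cdot2^*_\mu}\Big)\int_{\Om}f(x)|u|^q\,dx.
\]

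Next I would check the signs of the coefficients. Since $2^*_\mu=\tfrac{2N-\mu}{N-2}>1$ (indeed $2^*_\mu\ge\tfrac{N}{N-2}>1$ for $0<\mu<N$), we have $\tfrac12-\tfrac{1}{2\cdot2^*_\mu}>0$; since $\theta\in[1,2^*_\mu)$ we likewise have $\tfrac{1}{2\theta}-\tfrac{1}{2\cdot2^*_\mu}>0$, so the first two terms are nonnegative and the $\e^p$ term may simply be discarded for a lower bound. For the $f$-term, apply H\"older's inequality with exponents $\tfrac{2^*}{2^*-q}$ and $\tfrac{2^*}{q}$ together with the Sobolev embedding $H^1_0(\Om)\hookrightarrow L^{2^*}(\Om)$:
\[
\Big|\int_{\Om}f(x)|u|^q\,dx\Big|\le \|f\|_{L^r}\,\|u\|_{L^{2^*}}^q\le S^{-q/2}\|f\|_{L^r}\|u\|^q,
\]
where $r=\tfrac{2^*}{2^*-q}$. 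Hence
\[
J_\la(u)\ \ge\ \Big(\frac12-\frac{1}{2\cdot2^*_\mu}\Big)a\|u\|^2-\Big(\frac1q-\frac{1}{2\cdot2^*_\mu}\Big)S^{-q/2}\|f\|_{L^r}\|u\|^q.
\]

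Finally, since $1<q\le2$, the term $\|u\|^q$ grows strictly slower than $\|u\|^2$, so the right-hand side is a function of $\|u\|$ of the form $c_1 t^2-c_2 t^q$ with $c_1>0$; this tends to $+\infty$ as $t=\|u\|\to\infty$ and is bounded below on $[0,\infty)$ by an elementary one-variable minimization (Young's inequality, or just calculus). This gives both coercivity and boundedness below of $J_\la$ on $N_\la$. The only point requiring any care is verifying that the two leading coefficients are genuinely nonnegative, i.e.\ that $2^*_\mu>1$ and $\theta\le 2^*_\mu$; both hold under the standing hypotheses, so there is no real obstacle. \QED
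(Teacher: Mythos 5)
Your proof is correct and follows essentially the same route as the paper: use the Nehari constraint to eliminate the Choquard term, drop the nonnegative $\e^p$-term, and bound the weight term via H\"older and the Sobolev embedding to get $J_\la(u)\ge c_1\|u\|^2-c_2\|u\|^q$. One small caution: your closing remark that $\|u\|^q$ grows strictly slower than $\|u\|^2$ requires $q<2$ (not $q\le 2$), which is exactly the restriction the paper imposes in this lemma's conclusion.
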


\begin{proof}
	For $u \in N_{\lambda},$  using H\"older's inequality, we have
	\begin{align*}
	J_{\lambda}(u) &= a \left(\frac{1}{2}-\frac{1}{2.2^*_{\mu}}\right)\|u\|^{2}+ \e^p\left(\frac{1}{2\theta}-\frac{1}{2.2^*_{\mu}}\right)\|u\|^{2\theta} -\la\left(\frac{1}{q}-\frac{1}{2.2^*_{\mu}}\right)\int_\Omega f(x)|u|^{q}~dx,\\
	&\geq a\left(\frac{1}{2}-\frac{1}{2.2^*_{\mu}}\right)\|u\|^{2}-\lambda \left(\frac{1}{q}-\frac{1}{2.2^*_{\mu}}\right)\|f\|_{L^r}S^{\frac{-q}{2}} \|u\|^{q}.
	\end{align*}
	Thus  $J_\lambda$ is coercive and bounded below in $N_\lambda$ provided $1<q<2 $.  \QED
\end{proof}
\begin{Lemma}\label{khlem6}
	\begin{enumerate}
		\item [(i)] If $u$ is a local minimum or local maximum  of $J_{\lambda}$ on $N_{\lambda}$  and $u \notin N_{\lambda}^{0}.$ Then $u$ is a critical point for $J_{\lambda},$  and
		\item[(ii)] there exists $\sigma>0$ such that $\|u\|> \sigma$ for all $u \in N_\la^-$,
		\item [(iii)] $N_\la^-$ is a closed set in $H_0^1(\Om)$ topology.
		\end{enumerate}
\end{Lemma}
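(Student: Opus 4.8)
The plan is to treat the three items separately, each reducing to a short computation with the fibering map $\phi_u$. For (i), the plan is to apply the Lagrange multiplier rule to the constraint $\Phi(u):=\ld J_\la'(u),u\rd=0$, which defines $N_\la$. Since $\Phi\in C^1(H^1_0(\Om),\R)$ and, for $u\in N_\la$, differentiating the identity $\Phi(tu)=t\,\phi_u'(t)$ at $t=1$ (and using $\phi_u'(1)=0$) gives $\ld\Phi'(u),u\rd=\phi_u''(1)$, the hypothesis $u\notin N_\la^0$ forces $\Phi'(u)\neq 0$. Hence $N_\la$ is a $C^1$ manifold near $u$ and there is $\nu\in\R$ with $J_\la'(u)=\nu\,\Phi'(u)$; pairing with $u$, using $\ld J_\la'(u),u\rd=0$ and $\phi_u''(1)\neq 0$, yields $\nu=0$, so $J_\la'(u)=0$.

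For (ii), the plan is to exploit the two relations defining $N_\la^-$. For $u\in N_\la^-$, writing out $\phi_u'(1)=0$ and $\phi_u''(1)<0$ and eliminating $\la\int_\Om f|u|^q$ between them gives, since $\theta\ge 1>q/2$,
\[
a(2-q)\|u\|^2\le a(2-q)\|u\|^2+\e^p(2\theta-q)\|u\|^{2\theta}<(2\cdot 2^*_{\mu}-q)\int_\Om\!\int_\Om\frac{|u(x)|^{2^*_{\mu}}|u(y)|^{2^*_{\mu}}}{|x-y|^\mu}\,dx\,dy .
\]
Bounding the right side via the Hardy--Littlewood--Sobolev inequality \eqref{co9} together with the Sobolev embedding, i.e. by $C(N,\mu)S^{-2^*_{\mu}}\|u\|^{2\cdot 2^*_{\mu}}$, and dividing by $\|u\|^2\neq 0$, yields $\|u\|>\sigma$ with $\sigma:=\big(a(2-q)S^{2^*_{\mu}}/((2\cdot 2^*_{\mu}-q)C(N,\mu))\big)^{1/(2\cdot 2^*_{\mu}-2)}>0$ (recall $2\cdot 2^*_{\mu}>2$).

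For (iii), the plan is: take $u_n\in N_\la^-$ with $u_n\to u$ in $H^1_0(\Om)$. Since $J_\la\in C^1$ and every nonlinear term (including the Choquard term) is continuous under strong convergence, $v\mapsto\phi_v'(1)$ and $v\mapsto\phi_v''(1)$ are continuous, so $\phi_u'(1)=0$ and $\phi_u''(1)\le 0$; by (ii), $\|u\|=\lim\|u_n\|\ge\sigma>0$, hence $u\in N_\la$. It remains to exclude $\phi_u''(1)=0$, i.e. $u\in N_\la^0$, and I expect this to be the only delicate point. The resolution is that $N_\la^0=\emptyset$ in the range of $\la$ we work in: if $u\in N_\la^0$ then $\phi_u'(1)=\phi_u''(1)=0$, and eliminating the Choquard term from these two relations reproduces the lower bound $\|u\|\ge\sigma$ of (ii), while eliminating the $f$-term gives $\|u\|\le\big(\la(2\cdot 2^*_{\mu}-q)\|f\|_{L^r}S^{-q/2}/(a(2\cdot 2^*_{\mu}-2))\big)^{1/(2-q)}$; these are incompatible once $\la$ is small, forcing $u=0$. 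With $N_\la^0=\emptyset$ in hand, $\phi_u''(1)<0$, so $u\in N_\la^-$ and $N_\la^-$ is closed. This clash between the Choquard-driven lower bound and the $f$-driven, $\la$-vanishing upper bound on $\|u\|$ is exactly the mechanism behind the smallness restrictions on $\la$ in Theorems \ref{khthm1}--\ref{khthm2}.
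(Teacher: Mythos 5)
Your proof is correct. The paper does not prove this lemma at all --- it simply cites Dr\'abek--Pohozaev \cite{dpp} --- and what you have written is precisely the standard argument that reference (and Tarantello/Wu in the same tradition) supplies: the Lagrange-multiplier computation with $\ld\Phi'(u),u\rd=\phi_u''(1)\neq 0$ for (i), the elimination of the $f$-term between $\phi_u'(1)=0$ and $\phi_u''(1)<0$ followed by Hardy--Littlewood--Sobolev plus Sobolev for (ii) (your constant $\sigma$ agrees with the paper's bound \eqref{kh18}, since $S_{H,L}^{-2^*_{\mu}}=C(N,\mu)S^{-2^*_{\mu}}$ by Lemma \ref{khlem7}), and the limit argument for (iii). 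The one substantive point you identify --- that closedness of $N_\la^-$ is not automatic because a strong limit could a priori land in $N_\la^0$, so one must know $N_\la^0=\emptyset$ --- is genuine: part (iii) really holds only for $\la\in(0,\la_0)$, which is the regime in which the paper uses it (Theorem \ref{khthm2}). Your resolution of that point, playing the Choquard-driven lower bound on $\|u\|$ against the $\la$-dependent upper bound coming from $F_\la(u)=0$, is exactly the content of the paper's Lemma \ref{khlem1}, so in the paper's logical order you could simply invoke that lemma rather than re-derive it. Minor phrasing quibble: the incompatibility of the two bounds shows no $u\in N_\la^0$ exists (the set is empty), rather than ``forcing $u=0$''; the conclusion you draw is nevertheless the right one.
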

\begin{proof}
	See \cite{dpp}. \QED
\end{proof}
\begin{Lemma}\label{khlem1}
	There exists $\lambda_{0} > 0$ such that for all $\lambda \in (0, \lambda_{0})$,  we have $N_{\lambda}^{0} = \emptyset.$
\end{Lemma}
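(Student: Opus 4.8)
The plan is to combine the two defining relations of $N_\la^0$ and extract from them a lower and an upper bound on $\|u\|$ that become incompatible once $\la$ is small. Suppose $u\in N_{\la}^{0}$, so $u\neq0$, $\phi_{u}^{\prime}(1)=0$ and $\phi_{u}^{\prime\prime}(1)=0$. Abbreviating $F(u):=\int_{\Om}f(x)|u|^{q}\,dx$ and $G(u):=\int_{\Om}\int_{\Om}\frac{|u(x)|^{2^{*}_{\mu}}|u(y)|^{2^{*}_{\mu}}}{|x-y|^{\mu}}\,dxdy$, these two conditions read
\[
a\|u\|^{2}+\e^{p}\|u\|^{2\theta}=\la F(u)+G(u),\qquad
a\|u\|^{2}+(2\theta-1)\e^{p}\|u\|^{2\theta}=\la(q-1)F(u)+(2\cdot 2^{*}_{\mu}-1)G(u).
\]
Taking the linear combination that cancels $G(u)$ and the one that cancels $F(u)$ gives, respectively,
\[
\la(2\cdot 2^{*}_{\mu}-q)\,F(u)=(2\cdot 2^{*}_{\mu}-2)\,a\|u\|^{2}+(2\cdot 2^{*}_{\mu}-2\theta)\,\e^{p}\|u\|^{2\theta},
\]
\[
(2\cdot 2^{*}_{\mu}-q)\,G(u)=(2-q)\,a\|u\|^{2}+(2\theta-q)\,\e^{p}\|u\|^{2\theta}.
\]
Since $1<q<2$, $\theta\in[1,2^{*}_{\mu})$ and $2^{*}_{\mu}>1$, the coefficients $2\cdot 2^{*}_{\mu}-q$, $2\cdot 2^{*}_{\mu}-2$, $2\cdot 2^{*}_{\mu}-2\theta$, $2-q$ and $2\theta-q$ are all strictly positive.

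From the second identity, discarding the nonnegative term $(2\theta-q)\e^{p}\|u\|^{2\theta}$ and then invoking $G(u)\le S_{H,L}^{-2^{*}_{\mu}}\|u\|^{2\cdot 2^{*}_{\mu}}$ (which follows at once from the definition \eqref{nh5} of $S_{H,L}$ by homogeneity), one gets $\frac{(2-q)a}{2\cdot 2^{*}_{\mu}-q}\|u\|^{2}\le S_{H,L}^{-2^{*}_{\mu}}\|u\|^{2\cdot 2^{*}_{\mu}}$, hence, dividing by $\|u\|^{2}>0$,
\[
\|u\|\ge\left(\frac{(2-q)\,a\,S_{H,L}^{2^{*}_{\mu}}}{2\cdot 2^{*}_{\mu}-q}\right)^{\frac{1}{2\cdot 2^{*}_{\mu}-2}}=:m_{1}>0 ,
\]
a bound that depends on neither $\la$ nor $\e$.

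For the upper bound I use the first identity: discarding the nonnegative term $(2\cdot 2^{*}_{\mu}-2\theta)\e^{p}\|u\|^{2\theta}$ leaves $(2\cdot 2^{*}_{\mu}-2)\,a\|u\|^{2}\le\la(2\cdot 2^{*}_{\mu}-q)\,F(u)$, while H\"older's inequality and the Sobolev embedding $H_{0}^{1}(\Om)\hookrightarrow L^{2^{*}}(\Om)$ give $F(u)\le\int_{\Om}|f||u|^{q}\le\|f\|_{L^{r}}S^{-q/2}\|u\|^{q}$. Combining and dividing by $\|u\|^{q}>0$,
\[
\|u\|\le\left(\frac{\la(2\cdot 2^{*}_{\mu}-q)\,\|f\|_{L^{r}}}{(2\cdot 2^{*}_{\mu}-2)\,a\,S^{q/2}}\right)^{\frac{1}{2-q}}=:m_{2}(\la).
\]
Because $2-q>0$, $m_{2}(\la)\to0$ as $\la\to0^{+}$; choosing $\la_{0}>0$ so that $m_{2}(\la_{0})=m_{1}$, for every $\la\in(0,\la_{0})$ the two estimates $m_{1}\le\|u\|\le m_{2}(\la)<m_{1}$ are contradictory. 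Hence no such $u$ exists and $N_{\la}^{0}=\emptyset$ for all $\la\in(0,\la_{0})$.

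The argument is elementary; the only place requiring care is the bookkeeping of the exponents when forming the two linear combinations of $\phi_{u}^{\prime}(1)=0$ and $\phi_{u}^{\prime\prime}(1)=0$, and the verification that all the resulting coefficients are positive — which is exactly where $1<q<2$ and $\theta\in[1,2^{*}_{\mu})$ are used. There is no genuine analytic obstacle. It is worth noting that the threshold $\la_{0}$ produced in this way does not depend on $\e$, consistent with the $\e$-uniform statement of Theorem~\ref{khthm1}.
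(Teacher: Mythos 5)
Your proof is correct and follows essentially the same route as the paper: the two linear combinations you form are exactly the paper's inequality \eqref{kh18} (the $\la$-independent lower bound on $\|u\|$ via $S_{H,L}$) and the identity $F_\la(u)=0$ from \eqref{flambda}, which the paper likewise combines with the H\"older--Sobolev bound on $\int_\Om f|u|^q\,dx$ to reach a contradiction for small $\la$. The only cosmetic difference is that your uniform treatment absorbs the paper's separate Case 1 ($\int_\Om f|u|^q\,dx=0$), since your upper bound $\|u\|\le m_2(\la)$ already forces $u=0$ there.
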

\begin{proof}
	We divide the proof into two case:\\
	\textbf{Case 1:}  $u \in N_{\lambda}$ such that $ \ds \int_{\Omega}f(x) |u|^{q}~dx = 0.$\\
	Since $\phi_{u}^{\prime}(1)=0$, we have $a\|u\|^2+\e^p\|u\|^{2\theta}- \|u\|_{NL}^{2.2^*_{\mu}}=0.$ As a result, 
		\begin{equation*}
	\begin{aligned}
\phi_{u}^{\prime\prime}(1)= (2-2.2^*_{\mu} )a \|u\|^{2}-  (2\theta-2.2^*_{\mu} ) \e^p\|u\|^{2\theta} <0.
	\end{aligned}
	\end{equation*}
	\noi which implies  $u \notin N_{\lambda}^{0}$.\\
	\textbf{Case 2:} $u \in N_{\lambda}$ such that  $ \ds \int_{\Omega}f(x) |u|^{q}~dx \neq 0.$\\
	If $u \in N_{\lambda}^{0}$  then, $\phi_u^\prime(1)=0$ and $ \phi_{u}^{\prime\prime}(1)=0$. Therefore, we get
	\begin{equation}\label{kh18}
	\|u\| \geq \left(\frac{(2-q)a S_{H,L}^{2^*_{\mu}}}{2.2^*_{\mu}-q}\right)^{\frac{1}{2.2^*_{\mu}-2}}.
	\end{equation}
	Define $F_{\lambda}:N_{\lambda} \rightarrow \mathbb{R}$ as
	\begin{equation}\label{flambda}
	F_{\lambda}(u) = \frac{(2.2^*_{\mu}-2)a \|u\|^2 + (2.2^*_{\mu}-2\theta)\e^p\|u\|^{2\theta} }{(2.2^*_{\mu}-q)} - \lambda\int_\Omega f(x)|u|^{q}~dx.
	\end{equation}
	Then   $F_{\la}(u) = 0$ for all  $u\; \in N_{\lambda}^{0}.$ Therefore, we get
	\begin{equation*}
	\begin{aligned}
	F_{\lambda}(u)   
	& \geq \|u\|^{q}\left[\left(\frac{2.2^*_{\mu}-2}{2.2^*_{\mu}-q}\right)\|u\|^{(2-q)} -  \la \|f\|_{L^r}S^{\frac{-q}{2}}\right].
	\end{aligned}
	\end{equation*}
	Thus,  using \eqref{kh18}, we obtain
	\begin{equation*}
	F_{\lambda}(u) \geq \|u\|^{q}\left( \left(\frac{2.2^*_{\mu}-2}{2.2^*_{\mu}-q}\right) \left(\frac{(2-q)a S_{H,L}^{2^*_{\mu}}}{2.2^*_{\mu}-q} \right)^{\frac{2-q}{2.2^*_{\mu}-2}} - \la \|f\|_{L^r}S^{\frac{-q}{2}}\right).  \end{equation*}
	Hence, we get
	\begin{equation}\label{kh7}
	0<\lambda_0:=\;\left(\frac{(2.2^*_{\mu}-2)S^{\frac{q}{2}}}{(2.2^*_{\mu}-q)\|f\|_{L^r}}\right)\left(\frac{(2-q)a S_{H,L}^{2^*_{\mu}}}{2.2^*_{\mu}-q} \right)^{\frac{2-q}{2.2^*_{\mu}-2}}
	\end{equation}  
	\noi such that for $ F_{\lambda}(u)>0$ for all $\lambda\in (0, \lambda_0)$  and   $u \in N_{\lambda}^{0},$
	which yields a contradiction. Therefore, $N_{\lambda}^{0} = \emptyset$ for all  $\lambda\in (0, \lambda_0)$.
	\QED
\end{proof}

\noi Now, define $\mathcal{S}_u: \mb R^{+} \lra \mb R$ by
\begin{equation*}
\begin{aligned}
& \mathcal{S}_u(t)= t^{2-q}a\|u\|^{2}+ t^{2\theta-q}\e^p\|u\|^{2\theta}-  t^{2.2^*_{\mu}-q}\|u\|_{NL}^{2.2^*_{\mu}}. 
\end{aligned}
\end{equation*}
\noi Suppose $tu\in N_\la$ then it follows from the definition of $N_\la$ that $\phi_{tu}^{\prime \prime}(1)=t^{q+2}\mathcal{S}_u^{\prime}(t)$  for all  $t>0$. Moreover,  $tu\in N_{\la}$ if and only if $t$ is a
solution of $\mathcal{S}_u(t)={\la}\ds \int_{\Om} f(x) |u|^{q} dx.$
\begin{Lemma}\label{khlem5}
	For each $u\in H_0^1(\Om),\; \la \in (0,\la_0)$ ($\la_0$ is defined in \eqref{kh7}),  the following holds:
	\begin{enumerate}
		\item [(i)]  If $ \int_{\Om} f(x)|u|^{q} dx>0$  then there exists unique $t^+(u),\; t^-(u) >0 $ such that
		\begin{align*}
		t^+(u)<t_{max}<t^-(u),\; t^+(u)u \in N_\la^+ \text{ and }t^-(u)u \in N_\la^-. 
		\end{align*}
		Also,  $\mathcal{S}_u$ is decreasing on $(0, t^+)$, increasing on $(t^+, t^−)$ and decreasing on $(t^−, \infty)$. Moreover,
		\[
	J_\la(t^+u)= \min_{0\leq t\leq t_{max}}J_\la(tu),\quad 	J_\la(t^-u)=\ds \max_{t\geq t^+}J_\la(tu). 
		\]
		\item [(ii)] If $ \int_{\Om} f(x)|u|^{q} dx\leq 0$, then there exists unique $ t^- > t_{max}$ such that $t^-u \in N_\la^-$ and $$J_\la(t^-u)=\ds \sup _{t\geq 0}J_\la(tu).$$
		\item [(iii)] $t^-(u)$ is a continuous function.
		\item [(iv)] $N_\la^-= \ds \bigg\{u \in H_0^1(\Om)\setminus\{0\}\; : \; \frac{1}{\|u\|}t^-\left(\frac{u}{\|u\|}\right)=1\bigg\}$.
	\end{enumerate}
\end{Lemma}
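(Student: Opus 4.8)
The lemma is entirely a statement about the scalar function $\mathcal{S}_u$ introduced above and about the fibering map $\phi_u(t)=J_\la(tu)$. These are tied together by the elementary factorization
\[
\phi_u'(t)=t^{q-1}\Big(\mathcal{S}_u(t)-\la\int_\Om f(x)|u|^q\,dx\Big),\qquad t>0,
\]
which is a direct computation from the definitions, and by the identity $\phi_{tu}''(1)=t^{q+2}\mathcal{S}_u'(t)$ on $N_\la$ already recorded before the statement. So the plan is to reduce everything to locating the intersections of the graph of $\mathcal{S}_u$ with the horizontal line of height $\la\int_\Om f(x)|u|^q\,dx$: the abscissae of these intersections are exactly the $t>0$ with $tu\in N_\la$, the sign of $\mathcal{S}_u'$ there decides whether $tu\in N_\la^{+}$ or $N_\la^{-}$, and the factorization turns the sign of $\mathcal{S}_u(t)-\la\int_\Om f|u|^q$ into the monotonicity of $\phi_u$.

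First I would pin down the shape of $\mathcal{S}_u$. Writing $\mathcal{S}_u'(t)=t^{1-q}g_u(t)$ with
\[
g_u(t)=(2-q)a\|u\|^2+(2\theta-q)\e^p t^{2\theta-2}\|u\|^{2\theta}-(2.2^*_\mu-q)t^{2.2^*_\mu-2}\|u\|_{NL}^{2.2^*_\mu},
\]
note that since $1<q<2$ and $1\le\theta<2^*_\mu$ the exponents $2-q<2\theta-q<2.2^*_\mu-q$ are positive, so $\mathcal{S}_u(0^+)=0$ and $\mathcal{S}_u(t)\to-\infty$; moreover $g_u(0)=(2-q)a\|u\|^2>0$, $g_u(+\infty)=-\infty$, and $g_u$ has a single critical point (if $\theta=1$ its middle term is constant and $g_u$ is strictly decreasing; if $\theta>1$ then $g_u'(t)=t^{2\theta-3}\big[(2\theta-q)(2\theta-2)\e^p\|u\|^{2\theta}-(2.2^*_\mu-q)(2.2^*_\mu-2)t^{2.2^*_\mu-2\theta}\|u\|_{NL}^{2.2^*_\mu}\big]$ changes sign exactly once because $2.2^*_\mu>2\theta$). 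Hence $g_u$ has a unique zero $t_{max}(u)>0$, positive before it and negative after, so $\mathcal{S}_u$ increases strictly on $(0,t_{max})$, decreases strictly on $(t_{max},\infty)$, and attains there a positive maximum.

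The quantitative step, which I expect to be the only delicate one, is that this maximum dominates the line when $\la$ is small. Dropping the nonnegative $\e^p$-term, $\mathcal{S}_u(t)\ge a t^{2-q}\|u\|^2-t^{2.2^*_\mu-q}\|u\|_{NL}^{2.2^*_\mu}$, whose maximum over $t>0$ is explicit; combined with $\|u\|_{NL}^{2.2^*_\mu}\le S_{H,L}^{-2^*_\mu}\|u\|^{2.2^*_\mu}$ this gives $\max_{t>0}\mathcal{S}_u\ge\kappa\|u\|^q$ for a constant $\kappa=\kappa(a,q,N,\mu)>0$, while Hölder with exponent $r$ and the Sobolev embedding give $\la\int_\Om f|u|^q\,dx\le\la\|f\|_{L^r}S^{-q/2}\|u\|^q$; hence $\la\int_\Om f|u|^q\,dx<\max_{t>0}\mathcal{S}_u(t)$ for every $u\neq0$ once $\la$ is below a threshold which one can take to be the $\la_0$ of \eqref{kh7}. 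Now (i): if $\int_\Om f|u|^q>0$, the line at positive height $\la\int_\Om f|u|^q<\mathcal{S}_u(t_{max})$ meets the up–down graph of $\mathcal{S}_u$ at exactly $t^+(u)<t_{max}<t^-(u)$; the graph rises at $t^+$, so $\mathcal{S}_u'(t^+)>0$, $\phi_{t^+u}''(1)>0$, $t^+u\in N_\la^+$, and falls at $t^-$, so $t^-u\in N_\la^-$. From the factorization, $\mathcal{S}_u(t)-\la\int_\Om f|u|^q$ is negative on $(0,t^+)$, positive on $(t^+,t^-)$, negative on $(t^-,\infty)$, so $\phi_u$ is decreasing, then increasing, then decreasing, which gives $J_\la(t^+u)=\min_{0\le t\le t_{max}}J_\la(tu)$ and $J_\la(t^-u)=\max_{t\ge t^+}J_\la(tu)$. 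For (ii), when $\int_\Om f|u|^q\le0$ the line has non-positive height; since $\mathcal{S}_u>0$ on $(0,t_{max}]$ and $\mathcal{S}_u\to-\infty$, it meets the graph only once, at $t^-(u)>t_{max}$ with $\mathcal{S}_u'(t^-)<0$, so $t^-u\in N_\la^-$, and now the bracket is positive on $(0,t^-)$ and negative on $(t^-,\infty)$, so $\phi_u$ increases then decreases and $J_\la(t^-u)=\sup_{t\ge0}J_\la(tu)$.

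Part (iii) follows from the implicit function theorem applied to $H(t,u)=\mathcal{S}_u(t)-\la\int_\Om f|u|^q$, which is $C^1$ on $(0,\infty)\times(H^1_0(\Om)\setminus\{0\})$ since $u\mapsto(\|u\|,\|u\|_{NL},\int_\Om f|u|^q)$ is continuous; as $\partial_t H(t^-(u),u)=\mathcal{S}_u'(t^-(u))\neq0$, $t^-$ is locally $C^1$, in particular continuous, across the set $\int_\Om f|u|^q=0$ as well. For (iv), the homogeneities $\|su\|=s\|u\|$, $\|su\|_{NL}=s\|u\|_{NL}$, $\int_\Om f|su|^q=s^q\int_\Om f|u|^q$ yield $\mathcal{S}_{su}(t)=s^q\mathcal{S}_u(st)$, hence $t^-(su)=s^{-1}t^-(u)$; since the only multiple of $u$ in $N_\la^-$ is $t^-(u)u$, we get $u\in N_\la^-\iff t^-(u)=1$, and as $t^-(u/\|u\|)=\|u\|\,t^-(u)$ this is equivalent to $\tfrac1{\|u\|}t^-(u/\|u\|)=1$. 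The main obstacle is the estimate of the third paragraph: getting the strict inequality $\la\int_\Om f|u|^q<\max_t\mathcal{S}_u(t)$ uniformly in $u$ (this is what forces $\la_0$), while keeping control of the $\e$-dependent middle term of $\mathcal{S}_u$ — here simply by discarding it, but one must still check the surviving two-term maximum is large enough. The shape analysis of step (a), with its $\theta=1$ versus $\theta>1$ split, is the second most technical point; (iii) and (iv) are routine once (a) and (b) are available.
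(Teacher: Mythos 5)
Your proposal is correct and follows essentially the same route as the paper: establish the increasing–then–decreasing shape of $\mathcal{S}_u$ via the sign change of $t^{q-1}\mathcal{S}_u'(t)$, bound its maximum below by a constant times $\|u\|^q$ (you maximize the two-term lower bound directly, the paper evaluates at the explicit point $T_1$ — same constant either way), compare with $\la\|f\|_{L^r}S^{-q/2}\|u\|^q$ for $\la<\la_0$, and read off the intersection points and the resulting sign pattern of $\phi_u'$. The only difference is that you supply genuine arguments for (iii) (implicit function theorem) and (iv) (the scaling identity $\mathcal{S}_{su}(t)=s^q\mathcal{S}_u(st)$), where the paper simply cites Lemma 2.5 of \cite{wu2008}; your version of the statement also correctly reads the monotonicity claim as being about $\phi_u$ rather than $\mathcal{S}_u$, which is what the paper's own proof establishes.
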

\begin{proof}First we study the behaviour of the function $S_u(t)$ near $0$ and $\infty.$ Taking into account the fact that $1<q<2$ and $2\le 2\theta < 2. 2^*_\mu$, we can choose $t>0$, small enough, such that $S_u(t)>0$  and $\ds\lim_{t\ra \infty}\mathcal{S}_u(t) = -\infty$. Similarly,  $S^\prime_u(t)>0$ for small $t$ and  $\ds\lim_{t\ra \infty}\mathcal{S}_u^\prime(t) = -\infty$. 
	Now we will show that there exists unique $t_{max}>0$ such that $\mathcal{S}_u$ is increasing in $(0,t_{max})$, decreasing in $(t_{max},\infty)$ and $\mathcal{S}_u^{\prime}(t_{max})=0$.
	Set \begin{align*}
	\mathcal{A}_u(t)= (2-q)a\|u\|^{2}+(2\theta-q) \e^pt^{2\theta-2}\|u\|^{2\theta}- (2.2^*_\mu-q) t^{2.2^*_{\mu}-2}\|u\|_{NL}^{2.2^*_{\mu}}.
	\end{align*}
	That is, $\mathcal{A}_u(t)=t^{q-1} \mathcal{S}^\prime_u(t)$.
	So it is enough to show that there exists unique $t_{max}>0$ such that $\mathcal{A}_u(t_{max})=0$.
	We can write $\mathcal{A}_u(t)= (2-q)a\|u\|^{2}- \mathcal{B}_u(t)$, where 
	\begin{align*}
	\mathcal{B}_u(t)= (2.2^*_\mu-q) t^{2.2^*_{\mu}-2}\|u\|_{NL}^{2.2^*_{\mu}}-(2\theta-q) \e^pt^{2\theta-2} \|u\|^{2\theta}.
	\end{align*}
	Since $\theta< 2^*_\mu,\; \mathcal{B}_u(0)=0,\; \mathcal{B}_u(t)<0$ for small $t, \;\mathcal{B}_u(t)>0$ for large $t$ and $\mathcal{B}_u(t)\ra \infty$ as $t \ra \infty$. Moreover there exists a unique $t^*>0$ such that $\mathcal{B}_u(t^*)=0$. Indeed,
	\begin{align*}
	t^*= \left( \frac{(2\theta-q) \e^p\|u\|^{2\theta} }{(2.2^*_\mu-q)\|u\|_{NL}^{2.2^*_{\mu}}}   \right)^{\frac{1}{2.2^*_\mu-2\theta}}.
	\end{align*}
	Hence, there exists unique $t_{max}>t^*>0$ such that 
	$\mathcal{B}_u(t_{max})= (2-q)a\|u\|^{2}$. That is, $\mathcal{A}_u(t_{max})=0$. Thus there exists unique $t_{max}>0$ such that $\mathcal{S}_u$ is increasing in $(0,t_{max})$, decreasing in $(t_{max},\infty)$ and $\mathcal{S}_u^{\prime}(t_{max})=0$. This implies $\phi_{t_{max}u}^{\prime \prime}(1)=0$. Thus,
\begin{align*}
(2-q)t^{2}_{max}a \|u\|^{2}
&\leq (2-q)t^{2}_{max}\|u\|^{2}+  (2\theta-q) \e^pt^{2\theta}_{max}\|u\|^{2\theta}\\&=(2.2^*_{\mu}-q) t^{2.2^*_{\mu}}_{max}\|u\|_{NL}^{2.2^*_{\mu}} 
\\& \leq (2.2^*_{\mu}-q) t^{2.2^*_{\mu}}_{max}S_{H,L}^{-2^*_{\mu}}\|u\|^{2.2^*_{\mu}}.
\end{align*}
Therefore,
\begin{equation}\label{eqb3}
t_{\max}\ge \frac{1}{\|u\|}\left(\frac{(2-q)a S_{H,L}^{2^*_{\mu}}}{2.2^*_{\mu}-q}\right)^{\frac{1}{2.2^*_{\mu}-2}}:= T_1
\end{equation}
Now, since $S_u$ is increasing in $(0,t_{\max})$,  using \eqref{eqb3} we obtain, 
\begin{equation*}
\begin{aligned}
\mathcal{S}_u(t_{max})\geq  \mathcal{S}_u(T_1)&\geq T_1^{2-q}a \|u\|^{2}-T_1^{2.2^*_{\mu}-q}S_{H,L}^{-2^*_{\mu}}\|u\|^{2.2^*_{\mu}}\\
&=\|u\|^{q}a\left(\frac{2.2^*_{\mu}-2}{2.2^*_{\mu}-q}\right)\left(\frac{(2-q)a S_{H,L}^{2^*_{\mu}}}{2.2^*_{\mu}-q}\right)^{\frac{2-q}{2.2^*_{\mu}-2}}> 0.
\end{aligned}
\end{equation*}
{\bf Proof of $(i)$}: 
Since $ \int_{\Om} f(x)|u|^{q} dx>0$,  there exist $0< t^+<t_{max} <t^-$ such that 
\begin{align*}
\mathcal{S}_u(t^+)=\mathcal{S}_u(t^-)=\la \ds \int_{\Om} f(x) |u|^{q} dx.
\end{align*}
 This implies  $t^+u, t^-u \in N_\la.$ Also,  since $\la<\la_0$, $\mathcal{S}_u^{\prime}(t^+)>0$, $\mathcal{S}_u^{\prime}(t^-)<0$ implies $t^+u \in N^{+}_{\lambda}$ and $t^-u \in N^{-}_{\lambda}.$ 
 Indeed,  
$\phi^{\prime}_{u}(t) = t^{q}\bigg(\mathcal{S}_u(t)- \lambda \int_{\Omega} f(x)|u|^{q}~dx\bigg).$
 So $\phi^{\prime}_{u}(t)<0$ for all $t \in [0, t^+)$ and $\phi^{\prime}_{u}(t)>0$ for all $t \in (t^+, t^-)$.  Thus
 \begin{align*}
 J_\la(t^+u) = \ds\min_{0 \leq t \leq t_{max}}J_\la (tu).
 \end{align*}
  In addition, $\phi^{\prime}_{u}(t) > 0$ for all $t \in [t^+, t^-),\;
\phi^{\prime}_{u}(t^-) = 0$ and $\phi^{\prime}_{u}(t) < 0$ for all $t \in (t^-, \infty)$ implies that \[J_\la(t^-u)
= \ds\max_{t \geq t^+} J_\la(tu).\]
{\bf Proof of $(ii):$} 
Similarly, as in the part (i),  we have  $\mathcal{S}_u(t_{max})>0$. As $\lambda\ds \int_{\Om} f(x)|u|^{q}~ dx<0$, it implies there exist unique $t^-$ such that $\mathcal{S}_u(t^-)=\la \ds \int_{\Om} f(x) |u|^{q} dx $ and $\mathcal{S}_u^{\prime}(t^-)<0$ which implies $t^-u\in N_{\la}^{-}$ and the proof of (ii) follows.    For part $(iii)$ and $(iv)$ we refer to Lemma 2.5 of \cite{wu2008}.\QED
\end{proof}
Now let us define $$\ds \theta_\la= \ds   \inf_{u \in N_\la}J_\la(u),\quad \;\theta_\la^+= \ds \inf_{u \in N_\la^+}J_\la(u),\; \text{ and} \;\theta_\la^-= \ds \inf_{u \in N_\la^-}J_\la(u).$$ Then we have
\begin{Lemma}\label{khlem3}
	There exists  $C>0$ such that $\ds\theta_\la^+< - \frac{(2-q)(N-\mu+2)}{2q(2N-\mu)}aC<0$. 
\end{Lemma}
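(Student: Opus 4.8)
The plan is to prove the stronger pointwise estimate
\[
J_\la(u)<-\frac{(2-q)(N-\mu+2)}{2q(2N-\mu)}\,a\|u\|^2\qquad\text{for every }u\in N_\la^+,
\]
and then to read off $C$ by evaluating at one concrete element of $N_\la^+$. Here $\la\in(0,\la_0)$ with $\la_0$ as in \eqref{kh7}, so that Lemma \ref{khlem5} is available. First I would exhibit a point of $N_\la^+$: since $f$ is continuous and sign changing, $\{x\in\Om:f(x)>0\}$ is a nonempty open set, so any $v\in C_c^\infty(\{x\in\Om:f(x)>0\})\setminus\{0\}\subset H_0^1(\Om)$ satisfies $\int_\Om f(x)|v|^q\,dx>0$; Lemma \ref{khlem5}(i) then provides $t^+(v)>0$ with $u_0:=t^+(v)\,v\in N_\la^+$.

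To obtain the pointwise estimate, for $u\in N_\la$ I would use $\ld J_\la^\prime(u),u\rd=0$, i.e. $a\|u\|^2+\e^p\|u\|^{2\theta}-\|u\|_{NL}^{2\cdot 2^*_\mu}=\la\int_\Om f(x)|u|^q\,dx$, to eliminate the $f$-term from $J_\la(u)$, obtaining
\[
J_\la(u)=\Big(\tfrac12-\tfrac1q\Big)a\|u\|^2+\Big(\tfrac1{2\theta}-\tfrac1q\Big)\e^p\|u\|^{2\theta}+\Big(\tfrac1q-\tfrac1{2\cdot 2^*_\mu}\Big)\|u\|_{NL}^{2\cdot 2^*_\mu}.
\]
Since $1<q<2<2\cdot 2^*_\mu$, the coefficient of $\|u\|_{NL}^{2\cdot 2^*_\mu}$ is positive; and on $N_\la^+$ the condition $\phi_u^{\prime\prime}(1)>0$ rewrites (via the same Nehari identity) as
\[
(2\cdot 2^*_\mu-q)\|u\|_{NL}^{2\cdot 2^*_\mu}<(2-q)a\|u\|^2+(2\theta-q)\e^p\|u\|^{2\theta}.
\]
Substituting this upper bound for $\|u\|_{NL}^{2\cdot 2^*_\mu}$ into the expression for $J_\la(u)$ leaves only the $\|u\|^2$ and $\|u\|^{2\theta}$ terms.

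It then remains to simplify the two coefficients. Setting $m:=2\cdot 2^*_\mu=\tfrac{2(2N-\mu)}{N-2}$ and using $\big(\tfrac1q-\tfrac1m\big)\tfrac1{m-q}=\tfrac1{mq}$, the coefficient of $a\|u\|^2$ becomes $\tfrac12-\tfrac1q+\tfrac{2-q}{mq}=\tfrac{(q-2)(m-2)}{2mq}$, which equals $-\tfrac{(2-q)(N-\mu+2)}{2q(2N-\mu)}$ because $m-2=\tfrac{2(N-\mu+2)}{N-2}$; and the coefficient of $\e^p\|u\|^{2\theta}$ becomes $\tfrac1{2\theta}-\tfrac1q+\tfrac{2\theta-q}{mq}=\tfrac{(q-2\theta)(m-2\theta)}{2\theta mq}$, which is $\le 0$ since $q<2\le 2\theta<m$ (the inequality $2\theta<m$ coming from $\theta<2^*_\mu$). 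This gives the claimed pointwise estimate for all $u\in N_\la^+$.

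Finally, applying the estimate to $u_0$ and setting $C:=\|u_0\|^2>0$ yields $\theta_\la^+\le J_\la(u_0)<-\frac{(2-q)(N-\mu+2)}{2q(2N-\mu)}\,aC<0$, the last inequality being immediate since $2-q$, $N-\mu+2$, $2N-\mu$, $a$, $C$ are all positive. There is no serious obstacle here; the only point that needs care is the arithmetic in the third paragraph — checking that the $a\|u\|^2$-coefficient collapses to exactly the stated constant and that the $\e^p\|u\|^{2\theta}$-coefficient is genuinely nonpositive, which is precisely where $1<q<2$ and $\theta\in[1,2^*_\mu)$ enter. (Note that $C$, coming from $u_0$, may depend on $\la$ and $\e$, which is consistent with the statement.)
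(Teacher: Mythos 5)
Your proposal is correct and follows essentially the same route as the paper: exhibit $u_0=t^+(v)v\in N_\la^+$ via Lemma \ref{khlem5}(i), use the Nehari identity to eliminate the $f$-term from $J_\la$, substitute the $\phi_u''(1)>0$ bound on $\|u\|_{NL}^{2\cdot 2^*_\mu}$, and check that the resulting coefficient of $a\|u\|^2$ is exactly $-\frac{(2-q)(N-\mu+2)}{2q(2N-\mu)}$ while that of $\e^p\|u\|^{2\theta}$ is nonpositive, finally taking $C=\|u_0\|^2$. The only cosmetic difference is that you state the bound pointwise on all of $N_\la^+$ before specializing, whereas the paper works directly with $t^+u_0$; the arithmetic agrees with the paper's intermediate form $-\frac{2-q}{2q}\cdot\frac{2^*_\mu-1}{2^*_\mu}$.
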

\begin{proof}
	Let $u_0 \in H_0^1(\Om) $ and $ \int_{\Om} f(x)|u_0|^{q} dx>0$  then there exists a unique $t^+>0$ such that $t^+u_0\in N_\la^+$. Hence $\phi_{t^+u_0}^{\prime}(1)=0$ and $\phi_{t^+u_0}^{\prime\prime}(1)>0$. As a result, we get 
	\[
	(2.2^*_{\mu}-q)\|t^+u_0\|_{NL}^{2.2^*_{\mu}}< (2-q)a\|t^+u_0\|^2+(2\theta-q)\e^p\|t^+u_0\|^{2\theta}\] and 
	\[ J_\la(t^+u_0)= \left(\frac{1}{2}-\frac{1}{q}\right)a \|t^+u_0\|^2+ \left(\frac{1}{2\theta}-\frac{1}{q}\right)\e^p\|t^+u_0\|^{2\theta}+ \left(\frac{1}{q}-\frac{1}{2.2^*_{\mu}}\right)\|t^+u_0\|_{NL}^{2.2^*_{\mu}}.\]
	It implies 
	\begin{align*}
	J_\la(t^+u_0)& < - \left(\frac{2-q}{2q}\right)\left(\frac{2^*_{\mu}-1}{2^*_{\mu}}\right)a \|t^+u_0\|^2- \left(\frac{2\theta-q}{2q}\right)\left(\frac{1}{\theta}-\frac{1}{2^*_{\mu}}\right)\e^p\|t^+u_0\|^{2\theta}\\
	& \leq - \frac{(2-q)(N-\mu+2)}{2q(2N-\mu)}a C ,
	\end{align*}
	where $C=\|t^+u_0\|^2 $. Thus, $\theta_\la^+= \ds \inf_{u \in N_\la^+}J_\la(u)\leq J_\la(t^+u_0)< 
	 - \frac{(2-q)(N-\mu+2)}{2q(2N-\mu)}aC<0$.
	\QED
\end{proof}
 \begin{Lemma}\label{khlem4}
 	Let $\lambda \in (0, \lambda_{0}),$ and $u \in N_\la$ then there exists $\de > 0$ and a differentiable function
 $\xi : \mathcal{B}(0,\de) \subseteq H_0^1(\Om) \ra \mathbb{R}^{+}$ such that $\xi(0)=1,$ the function $\xi(v)(u-v)\in N_\la$
 and
 \begin{equation}\label{kh6}
 \langle\xi^{\prime}(0), v\rangle = \frac{\ds (2a + 2\theta\e^p\|u\|^{2(\theta-1)}) \int_{\Om} \na u \cdot \na v ~dx  - q\lambda \int_\Omega f(x)|u|^{q-2}uv\;dx- 2.2^*_{\mu} \mathcal{K}(u,v) }{(2-q)a \|u\|^{2} + (2\theta-q)\e^p \|u\|^{2\theta}-(2.2^*_{\mu}-q)\|u\|_{NL}^{2.2^*_{\mu}} },
 \end{equation}
 for all $u, v\in  H_0^1(\Om)$,  where $\mathcal{K}(u,v) = \ds \int_{\Om} \int_{\Om} \frac{|u(x)|^{2^*_{\mu}}|u(y)|^{2^*_{\mu}-2}u(y)v(y)}{|x-y|^{\mu}} ~dx dy $.
 \end{Lemma}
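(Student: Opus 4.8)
The plan is to obtain $\xi$ by applying the implicit function theorem to the scalar equation that encodes membership in the Nehari manifold. Define $G:\mathbb{R}^{+}\times H_0^1(\Om)\ra\mathbb{R}$ by
\[
G(t,v)=\ld J_\la^{\prime}(t(u-v)),\,t(u-v)\rd = at^{2}\|u-v\|^{2}+\e^{p}t^{2\theta}\|u-v\|^{2\theta}-\la t^{q}\int_\Om f(x)|u-v|^{q}\,dx-t^{2.2^{*}_{\mu}}\|u-v\|_{NL}^{2.2^{*}_{\mu}}.
\]
Since the maps $v\mapsto\|v\|^{2}$, $v\mapsto\int_\Om f|v|^{q}$ and $v\mapsto\|v\|_{NL}^{2.2^{*}_{\mu}}$ are of class $C^{1}$ on $H_0^1(\Om)$ (the last one by virtue of the Hardy--Littlewood--Sobolev inequality \eqref{co9}), $G\in C^{1}(\mathbb{R}^{+}\times H_0^1(\Om))$, and since $u\in N_\la$ we have $G(1,0)=\ld J_\la^{\prime}(u),u\rd=0$.

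The key step is to check that $\pa_t G(1,0)\neq 0$. Differentiating in $t$, evaluating at $(1,0)$, and using the Nehari identity $a\|u\|^{2}+\e^{p}\|u\|^{2\theta}-\la\int_\Om f(x)|u|^{q}\,dx-\|u\|_{NL}^{2.2^{*}_{\mu}}=0$ to eliminate the term carrying $\int_\Om f(x)|u|^{q}\,dx$, one gets
\[
\frac{\pa G}{\pa t}(1,0)=(2-q)a\|u\|^{2}+(2\theta-q)\e^{p}\|u\|^{2\theta}-(2.2^{*}_{\mu}-q)\|u\|_{NL}^{2.2^{*}_{\mu}}=\phi_u^{\prime\prime}(1).
\]
Because $\la\in(0,\la_0)$, Lemma \ref{khlem1} gives $N_\la^{0}=\emptyset$, hence $u\notin N_\la^{0}$ and $\phi_u^{\prime\prime}(1)\neq0$, i.e. $\pa_t G(1,0)\neq0$. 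This is the one place where the restriction $\la<\la_0$ enters, and it is essentially the only nonformal point in the argument.

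The implicit function theorem then yields $\de>0$ and a $C^{1}$ map $\xi:\mathcal{B}(0,\de)\subseteq H_0^1(\Om)\ra\mathbb{R}$ with $\xi(0)=1$ and $G(\xi(v),v)=0$ on $\mathcal{B}(0,\de)$; shrinking $\de$ we may assume $\xi>0$ there (continuity and $\xi(0)=1$) and $\xi(v)(u-v)\neq0$ (continuity and $u\neq0$), so $G(\xi(v),v)=0$ says precisely $\xi(v)(u-v)\in N_\la$. Finally, differentiating $s\mapsto G(\xi(sv),sv)\equiv0$ at $s=0$ gives $\ld\xi^{\prime}(0),v\rd=-\big(\pa_t G(1,0)\big)^{-1}\pa_v G(1,0)[v]$, and a direct computation of $\pa_v G(1,0)[v]$ — exploiting the symmetry of the double integral when differentiating the nonlocal term, which produces the factor $2.2^{*}_{\mu}\mathcal{K}(u,v)$ — reduces this to exactly \eqref{kh6}.

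There is no genuine analytic obstacle; the points requiring care are (a) justifying that $G$, and in particular its nonlocal summand, is $C^{1}$ so that the implicit function theorem is applicable, and (b) verifying $\pa_t G(1,0)\neq0$, which is the assertion $u\notin N_\la^{0}$ provided by Lemma \ref{khlem1} for $\la<\la_0$.
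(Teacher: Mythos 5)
Your proposal is correct and follows essentially the same route as the paper: both define the scalar function $G(t,v)=\ld J_\la^{\prime}(t(u-v)),t(u-v)\rd$, verify $\pa_t G(1,0)=(2-q)a\|u\|^{2}+(2\theta-q)\e^{p}\|u\|^{2\theta}-(2.2^{*}_{\mu}-q)\|u\|_{NL}^{2.2^{*}_{\mu}}\neq 0$ via Lemma \ref{khlem1}, and apply the implicit function theorem to obtain $\xi$ and formula \eqref{kh6}. Your added remarks on shrinking $\de$ to keep $\xi>0$ and $\xi(v)(u-v)\neq 0$ are a welcome bit of extra care that the paper leaves implicit.
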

\begin{proof}
	For  $u\in N_\la$, define  a function $\mathcal{H}_u:\mathbb{R}\times H_0^1(\Om) \ra \mathbb{R}$ given by
	\begin{equation*}
	\begin{aligned}
	\mathcal{H}_u(t,v) &:= \langle J^{\prime}_{\la}(t(u-v)),(t(u-v))\rangle\\& \;=
	t^{2}a\|u-v\|^{2}+\e^p  t^{2\theta}\|u-v\|^{2\theta}-\la t^{q} \int_\Om {f(x)|u-v|^{q}dx}-t^{2.2^*_{\mu}}\|u-v\|_{NL}^{2.2^*_{\mu}}.
	\end{aligned}
	\end{equation*}
	\noi Then $\mathcal{H}_u(1,0) = \langle J^{\prime}_{\la} (u),u \rangle = 0$ which on using Lemma \ref{khlem1} gives 
	\begin{equation*}
	\frac{\partial}{ \partial t}\mathcal{H}_u(1,0)= (2-q)a\|u\|^{2}+(2\theta-q)\e^p\|u\|^{2\theta} -(2.2^*_{\mu}-q)\|u\|_{NL}^{2.2^*_{\mu}} \neq 0. 
	\end{equation*}
	\noi By  Implicit Function Theorem, there exist $\epsilon >0$ and a  differentiable function $\xi : \mathcal{B}(0, \de) \subseteq H_0^1(\Om)  \ra \mathbb{R}$ such that $\xi(0) = 1$, $\mathcal{H}_u(\xi(v),v) = 0$, for all $ v \in \mathcal{B}(0, \de)$ and  equation \eqref{kh6} holds. Moreover, 
	$$
	0=  a\|\xi(v)(u-v)\|^{2}+ \e^p\|\xi(v)(u-v)\|^{2\theta} - \lambda \int_\Omega{f(x)|\xi(v)(u-v)|^{q}dx} - \|\xi(v)(u-v)\|_{NL}^{2.2^*_{\mu}}, $$
	for all $v \in \mathcal{B}(0, \de)$. Thus, $ \xi(v)(u-v) \in N_\la$.
	\QED
\end{proof}

\begin{Proposition}\label{khprop2}
	Let $\{u_n\}$ be a $(PS)_c$ sequence for $J_\la$ with 
	\[
-\infty<	c< c_{\infty}:=  \frac{N-\mu+2}{2(2N-\mu)}(aS_{H,L})^{\frac{2N-\mu}{N-\mu+2}}- \widehat{D} \la^{\frac{2}{2-q}}, 
	\]
	where $\widehat{D}= \left(\frac{(2-q)(2\theta-q)}{4 \theta q}\right)   \left(\frac{ 2\theta-q }{2aS(\theta-1)}\right)^{\frac{q}{2-q}}\|f\|_{L^r}^{\frac{2}{2-q}}$.	Then $\{u_n\}$ contains a convergent subsequence.
\end{Proposition}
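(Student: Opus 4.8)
The plan is to run the standard concentration--compactness argument, adapted to the presence of the Kirchhoff coefficient in the weak limit. First I would verify that $\{u_n\}$ is bounded in $H_0^1(\Om)$: using that $J_\la(u_n)-\tfrac{1}{2.2^*_{\mu}}\langle J_\la^{\prime}(u_n),u_n\rangle=c+o(1)+o(\|u_n\|)$, discarding the nonnegative term $\e^p\big(\tfrac{1}{2\theta}-\tfrac{1}{2.2^*_{\mu}}\big)\|u_n\|^{2\theta}$ and estimating $\int_\Om f|u_n|^q\le\|f\|_{L^r}S^{-q/2}\|u_n\|^q$ exactly as in the proof of Lemma \ref{khlem2}, one gets $a\big(\tfrac12-\tfrac{1}{2.2^*_{\mu}}\big)\|u_n\|^2\le C(1+\|u_n\|^{q})+o(\|u_n\|)$, so boundedness follows since $1<q<2$. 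Passing to a subsequence, $\|u_n\|^2\to t_0\ge 0$, whence the Kirchhoff coefficient $a+\e^p\|u_n\|^{2(\theta-1)}\to A:=a+\e^p t_0^{\theta-1}\ge a$; moreover $u_n\rp u$ in $H_0^1(\Om)$, $u_n\to u$ in $L^s(\Om)$ for each $s\in[1,2^*)$, and $u_n\to u$ a.e.\ in $\Om$.

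Next, set $v_n:=u_n-u$ and, along a further subsequence, $\ell:=\lim\|v_n\|^2$ and $L:=\lim\|v_n\|_{NL}^{2.2^*_{\mu}}$. By the Brezis--Lieb lemma for $\|\cdot\|^2$, its nonlocal counterpart for the Choquard term (see \cite{yang}), and the convergence $\int_\Om f|u_n|^q\to\int_\Om f|u|^q$ (a standard consequence of $f\in L^r$ and Rellich--Kondrachov), I obtain $\|u_n\|^2=\|u\|^2+\ell+o(1)$ and $\|u_n\|_{NL}^{2.2^*_{\mu}}=\|u\|_{NL}^{2.2^*_{\mu}}+L+o(1)$. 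Testing the relation $\langle J_\la^{\prime}(u_n),u_n-u\rangle\to 0$, and using $\int_\Om\na u_n\cdot\na(u_n-u)\to\ell$, the weak continuity of the Choquard term (so that $\mc K(u_n,u)\to\|u\|_{NL}^{2.2^*_{\mu}}$, which follows from \eqref{co9} together with $u_n\to u$ in $L^s$, $s<2^*$), and the splitting above, I get the key identity $L=A\ell$. Since \eqref{nh5} gives $L\le S_{H,L}^{-2^*_{\mu}}\,\ell^{2^*_{\mu}}$, combining with $L=A\ell$ and $A\ge a$ yields the dichotomy: either $\ell=0$, or
\[
\ell\ \ge\ \big(aS_{H,L}^{2^*_{\mu}}\big)^{\frac{N-2}{N-\mu+2}}\qquad\text{and}\qquad L=A\ell\ \ge\ a\ell.
\]

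To conclude I rule out $\ell>0$ by an energy argument. Here the relevant combination is $J_\la(u_n)-\tfrac{1}{2\theta}\langle J_\la^{\prime}(u_n),u_n\rangle=\tfrac{a(\theta-1)}{2\theta}\|u_n\|^2-\la\tfrac{2\theta-q}{2\theta q}\int_\Om f|u_n|^q+\big(\tfrac{1}{2\theta}-\tfrac{1}{2.2^*_{\mu}}\big)\|u_n\|_{NL}^{2.2^*_{\mu}}=c+o(1)$. Assuming $\ell>0$, passing to the limit with the splittings, discarding $\|u\|_{NL}^{2.2^*_{\mu}}\ge 0$, inserting the bounds $\ell\ge(aS_{H,L}^{2^*_{\mu}})^{\frac{N-2}{N-\mu+2}}$ and $L\ge a\ell$, and using $\int_\Om f|u|^q\le\|f\|_{L^r}S^{-q/2}\|u\|^q$, the terms carrying $\ell$ recombine via $\tfrac{\theta-1}{2\theta}+\tfrac{1}{2\theta}-\tfrac{1}{2.2^*_{\mu}}=\tfrac{2^*_{\mu}-1}{2.2^*_{\mu}}=\tfrac{N-\mu+2}{2(2N-\mu)}$ and $a\big(aS_{H,L}^{2^*_{\mu}}\big)^{\frac{N-2}{N-\mu+2}}=(aS_{H,L})^{\frac{2N-\mu}{N-\mu+2}}$ to give
\[
c\ \ge\ \Big[\tfrac{a(\theta-1)}{2\theta}\|u\|^2-\la\tfrac{2\theta-q}{2\theta q}\|f\|_{L^r}S^{-q/2}\|u\|^{q}\Big]+\tfrac{N-\mu+2}{2(2N-\mu)}(aS_{H,L})^{\frac{2N-\mu}{N-\mu+2}}.
\]
Minimizing $t\mapsto\tfrac{a(\theta-1)}{2\theta}t^2-\la\tfrac{2\theta-q}{2\theta q}\|f\|_{L^r}S^{-q/2}t^{q}$ over $t\ge 0$ by elementary calculus (Young's inequality) gives exactly $-\widehat D\,\la^{\frac{2}{2-q}}$, so $c\ge c_{\infty}$, contradicting $c<c_{\infty}$. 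Therefore $\ell=0$, i.e.\ $\|u_n-u\|\to 0$, and $\{u_n\}$ has a strongly convergent subsequence in $H_0^1(\Om)$.

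The main obstacle is this final energy estimate: one has to keep the Kirchhoff term under control in the limit — it is precisely the identity $L=A\ell$ with $A\ge a$ that forces the concentration contribution to be at least the $\theta$-independent level $\tfrac{N-\mu+2}{2(2N-\mu)}(aS_{H,L})^{\frac{2N-\mu}{N-\mu+2}}$ — and then balance the $\|u\|^2$-term against the $f$-term by Young's inequality so that the resulting deficit is exactly $\widehat D\,\la^{\frac{2}{2-q}}$. A secondary technical point is the nonlocal Brezis--Lieb lemma and the weak continuity of $\mc K(u_n,\cdot)$, both of which rest on the Hardy--Littlewood--Sobolev inequality \eqref{co9} together with the compact embeddings $H_0^1(\Om)\hookrightarrow L^s(\Om)$ for $s<2^*$.
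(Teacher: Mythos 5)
Your argument is correct, and its final energy computation --- the combination $J_\la(u_n)-\tfrac{1}{2\theta}\langle J_\la^{\prime}(u_n),u_n\rangle$, the recombination of coefficients into $\tfrac{\theta-1}{2\theta}+\tfrac{1}{2\theta}-\tfrac{1}{2.2^*_{\mu}}=\tfrac{N-\mu+2}{2(2N-\mu)}$, and the Young-type minimization that produces exactly $-\widehat D\la^{\frac{2}{2-q}}$ --- coincides with the paper's. Where you genuinely diverge is in how the defect of compactness is captured. The paper invokes the second concentration-compactness principle (in the Choquard form of \cite{gaoyang}): it represents the limits of $|\na u_n|^2$ and of the nonlocal density as measures with atoms $w_i\de_{z_i}$, $v_i\de_{z_i}$, and tests $J_\la^{\prime}(u_n)$ against localized cut-offs $\varphi_{\e,i}u_n$ to derive $aw_i\le v_i$, hence the per-atom alternative $w_i=0$ or $w_i\ge\bigl(aS_{H,L}^{2^*_{\mu}}\bigr)^{1/(2^*_{\mu}-1)}$. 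You instead work globally: the Brezis--Lieb splitting for both norms together with testing against $u_n-u$ yields the single identity $L=A\ell$ with $A\ge a$, and the same alternative for the total defect $\ell$. On a bounded domain the two routes are interchangeable; yours dispenses with the measure-theoretic machinery and the cut-off estimates, and it delivers strong convergence immediately from $\ell=0$, whereas the paper needs the extra step of testing $J_\la^{\prime}$ against $u_n$ and against $u$ to upgrade emptiness of the atom set to norm convergence. The price is two auxiliary facts you should state more carefully: the nonlocal Brezis--Lieb lemma and the convergence $\mathcal K(u_n,u)\to\|u\|_{NL}^{2.2^*_{\mu}}$ are indeed in \cite{yang}, but the latter is a weak-convergence statement (boundedness plus a.e.\ convergence giving $|u_n|^{2^*_{\mu}-2}u_n\rp|u|^{2^*_{\mu}-2}u$ in the critical Lebesgue space), not a consequence of strong $L^s$-convergence for $s<2^*$; likewise $\int_\Om f|u_n|^q\to\int_\Om f|u|^q$ uses that $r=\frac{2^*}{2^*-q}$ is the critical conjugate exponent, so it rests on weak convergence of $|u_n|^q$ in $L^{2^*/q}$ (or a splitting of $f$), not on Rellich--Kondrachov alone.
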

\begin{proof}
Let $\{u_n\}$ be a  sequence such that
\begin{align*}
J_\la(u_n)\ra c\; \text{and}\;   J_\la^{\prime}(u_n)\ra 0 \quad  \text{ as } n \ra \infty. 
\end{align*} 
	By standard arguments $\{u_n\}$ is a bounded sequence. Then there exists $u\in H_0^1(\Om)$ such that up to a subsequence $u_n\rp u $ weakly in $H_0^1(\Om)$, $ u_n \ra u$ strongly in $L^\gamma(\Om)$ for all $\gamma \in  [1,2^*)$, $u_n\ra u$ a.e in $\Om$, $\|u_n\|\ra  \al $ as a real sequence and there exists $h \in L^2(\Om)$ such that $|u_n(x)| \leq h(x)$  a.e in $\Om$. Hence we can assume that 
	\begin{align*}
	|\na u_n|^2\rp \omega ,\; |u_n|^{2^*}\ra \tau , \; \left( \int_{\Om}\frac{|u_n(y)|^{2^*_{\mu}}}{|x-y|^{\mu}}~dy \right)|u_n|^{2^*_{\mu}} \rp \nu \text{ in the sense of measure}. 
	\end{align*}
By the second concentration-compactness principle (See \cite{gaoyang}), there exist  at most countable set $I$, 
 sequence of points $\{z_i\}_{i\in I}\subset \mathbb{R}^N$ and  families of positive numbers $\{v_i:i\in I\}$, $\{w_i:i\in I\}$ and $\{x_i:i\in I\}$  such that
 \begin{align*}
 & \nu= \left( \int_{\Om}\frac{|u(y)|^{2^*_{\mu}}}{|x-y|^{\mu}}~dy \right)|u|^{2^*_{\mu}} + \sum_{i \in I } v_i \de_{z_i},\\
 & \omega\geq |\na u|^2+\sum_{i \in I } w_i \de_{z_i} ,\; \tau \geq |u|^{2^*} + \sum_{i \in I } x_i \de_{z_i}, \\
 & \quad S_{H,L} v_i^{\frac{1}{2^*_{\mu}}}\leq w_i,   \text{ and } \; v_i\leq C(N,\mu) x_i^{\frac{2N-\mu}{N}}, 
 \end{align*}
where $\de_{z_i}$ is the Dirac mass at $z_i$. 
Moreover, we can construct a smooth cut-off function $\varphi_{\e,i}$ centered at $z_i$ such that 
\begin{align*}
0\leq \varphi_{\e,i}(x)\leq 1,\; \varphi_{\e,i}(x)= 1  \text{ in } B\left(z_i, \frac{\e}{2}\right) ,\; \varphi_{\e,i}(x)= 0  \text{ in } \mathbb{R}^N \setminus B\left(z_i, \e \right),\; |\na\varphi_{\e,i}(x) |\leq \frac{4}{\e},
\end{align*}
	for any $\e>0$ small.  Observe that 
	\begin{align*}
	\left |\int_{\Om} f(x) |u_n|^q\varphi_{\e,i}~dx \right|& 
	= 	\left |\int_{B\left(z_i, \e \right)} f(x) |u_n|^q\varphi_{\e,i}~dx \right| \\
	&\leq  \|f\|_{L^r} \left(\int_{B\left(z_i, \e \right)}  |u_n|^{2^*}~dx\right)^{\frac{q}{2^*}} \ra 0 \text{ as } \e \ra 0.
	\end{align*}
	It implies  $\ds \lim_{\e\ra 0}  \lim_{ n \ra \infty} \int_{\Om} f(x) |u_n|^q\varphi_{\e,i}~dx = 0$.
	 Therefore,\\
	\begin{align*}
0&=  \ds \lim_{\e\ra 0}  \lim_{ n \ra \infty} \ld J_\la^{\prime} (u_n), \varphi_{\e,i}u_n \rd\\
& = \ds \lim_{\e\ra 0}  \lim_{ n \ra \infty}  \bigg\{ \left( a+\e^p \|u_n\|^{2(\theta-1)}\right) \int_{\Om} \na u_n\cdot \na (\varphi_{\e,i}u_n)~dx - \int_{\Om} |u_n|^q \varphi_{\e,i} ~ dx\\
& \hspace{3cm} - \int_{\Om} \int_{\Om} \frac{|u_n(x)|^{2^*_{\mu}}|u_n(y)|^{2^*_{\mu}}\varphi_{\e,i}(y)}{|x-y|^{\mu}} ~dx dy \bigg\}\\
& \geq \ds \lim_{\e\ra 0}  \lim_{ n \ra \infty}  \bigg\{ \left( a+\e^p \|u_n\|^{2(\theta-1)}\right) \int_{\Om} \na u_n\cdot \na (\varphi_{\e,i}u_n)~dx \\
&\hspace{3cm}- \int_{\Om} \int_{\Om} \frac{|u_n(x)|^{2^*_{\mu}}|u_n(y)|^{2^*_{\mu}}\varphi_{\e,i}(y)}{|x-y|^{\mu}} ~dx dy \bigg\}\\
& \geq \ds \lim_{\e\ra 0}  \lim_{ n \ra \infty}  \bigg\{ \left( a+\e^p \|u_n\|^{2(\theta-1)}\right) \int_{\Om} \left( |\na u_n|^2 \varphi_{\e,i} +  u_n \na u_n\cdot \na \varphi_{\e,i}\right) ~dx\\
& \hspace{3cm} - \int_{\Om} \int_{\Om} \frac{|u_n(x)|^{2^*_{\mu}}|u_n(y)|^{2^*_{\mu}}\varphi_{\e,i}(y)}{|x-y|^{\mu}} ~dx dy \bigg\}
\end{align*} \begin{align*}
& \geq \ds \lim_{\e\ra 0}  \lim_{ n \ra \infty}  \bigg\{  a \int_{\Om}  |\na u_n|^2 \varphi_{\e,i}  ~dx - \int_{\Om} \int_{\Om} \frac{|u_n(x)|^{2^*_{\mu}}|u_n(y)|^{2^*_{\mu}}\varphi_{\e,i}(y)}{|x-y|^{\mu}} ~dx dy \bigg\}\\
& \geq \ds \lim_{\e\ra 0}  \lim_{ n \ra \infty}  \bigg\{  a \int_{\Om}   \varphi_{\e,i}  ~d\omega - \int_{\Om}  \varphi_{\e,i} ~d\nu \bigg\}\\&
 \geq a w_i- v_i. 
	\end{align*}
	Therefore, $a w_i\leq  v_i$. Combining this with the fact that  $ S_{H,L} v_i^{\frac{1}{2^*_{\mu}}}\leq w_i$, we obtain 
	\begin{align}\label{kh22}
w_i\geq \left(a S_{H,L}^{2^*_{\mu}}\right)^{\frac{1}{2^*_{\mu}-1}} \text{ or } w_i= 0. 
	\end{align}
	Using H\"older's inequality, Sobolev embedding and Young's inequality, we get 
	\begin{equation}
\begin{aligned}\label{kh1}
\la \int_{\Om}f(x)|u|^q &~dx  \leq \la \|f\|_{L^r} S^{-\frac{q}{2}}\|u\|^q\\
& = \left(\left[\frac{a(\theta-1)}{\theta q }  \left[\frac{1}{q}-\frac{1}{2\theta}\right]^{-1} \right]^{\frac{q}{2}} \|u\|^q \right)\left(\left[\frac{a(\theta-1)}{\theta q }  \left[\frac{1}{q}-\frac{1}{2\theta}\right]^{-1} \right]^{\frac{-q}{2}}  \la \|f\|_{L^r} S^{-\frac{q}{2}} \right)\\
& \leq \frac{a(\theta-1)}{2\theta  }  \left[\frac{1}{q}-\frac{1}{2\theta}\right]^{-1}  \|u\|^2+  \la^{\frac{2}{2-q}}\frac{2-q}{2} \left(\frac{ 2\theta-q }{2aS(\theta-1)}\right)^{\frac{q}{2-q}}\|f\|_{L^r}^{\frac{2}{2-q}}. 
\end{aligned}
\end{equation}
We claim that the set $I$ is empty. Suppose not, that is, there exists $i_0\in I$ such that  $w_{i_0}\geq \left(a S_{H,L}^{2^*_{\mu}}\right)^{\frac{1}{2^*_{\mu}-1}}$. Then using \eqref{kh1},  we have 
\begin{align*}
c=&  \lim_{n\ra \infty}J_\la(u_n)- \frac{1}{2\theta}\ld J_\la^{\prime}(u_n), u_n \rd \nonumber\\
& =  \lim_{n\ra \infty} \bigg\{ \frac{a(\theta-1)}{2\theta  } \|u_n\|^2 -\la \left(\frac{1}{q}-\frac{1}{2\theta}\right) \int_{\Om}|u_n|^q~dx + \left(\frac{1}{2\theta}-\frac{1}{2.2^*_{\mu}}\right) \|u_n\|_{NL}^{2.2^*_{\mu}} \bigg\}\nonumber \\
& \geq \bigg\{ \frac{a(\theta-1)}{2\theta  } \left( \|u\|^2 +\sum_{i \in I } w_i \right)  -\la \left(\frac{1}{q}-\frac{1}{2\theta}\right) \int_{\Om}f(x)|u|^q~dx\nonumber \\
& \hspace{2cm} + \left(\frac{1}{2\theta}-\frac{1}{2.2^*_{\mu}}\right) \left( \|u\|_{NL}^{2.2^*_{\mu}} +  \sum_{i \in I } v_i \right) \bigg\}\\
& \geq \bigg\{ \frac{a(\theta-1)}{2\theta  }  w_{i_0}   - \la^{\frac{2}{2-q}} \left(\frac{(2-q)(2\theta-q)}{4 \theta q}\right)   \left(\frac{ 2\theta-q }{2aS(\theta-1)}\right)^{\frac{q}{2-q}}\|f\|_{L^r}^{\frac{2}{2-q}} \bigg\}\\
& \geq    \left(\frac{1}{2}-\frac{1}{2.2^*_{\mu}}\right) a w_{i_0} - \widehat{D} \la^{\frac{2}{2-q}} \geq    \frac{N-\mu+2}{2(2N-\mu)} \left(a S_{H,L}\right)^{\frac{2^*_{\mu}}{2^*_{\mu}-1}} - \widehat{D} \la^{\frac{2}{2-q}} \nonumber.
\end{align*}
	 This yields a contradiction. Thus $I$ is empty and 
	\begin{align*}
	\int_{\Om} \int_{\Om} \frac{|u_n(x)|^{2^*_{\mu}}|u_n(y)|^{2^*_{\mu}}}{|x-y|^{\mu}} ~dx dy \ra \int_{\Om} \int_{\Om} \frac{|u(x)|^{2^*_{\mu}}|u(y)|^{2^*_{\mu}}}{|x-y|^{\mu}} ~dx dy  \text{ as } n \ra \infty. 
	\end{align*}
	Now using the fact that $ \ld J_\la^{\prime}(u_n), u_n \rd \ra 0 $ and $\ld J_\la^{\prime}(u_n), u \rd \ra 0$  we have
	\begin{align*}
	& (a+\e^p\al^{2(\theta-1)})\al^2= \la \int_{\Om}|u|^q dx+ \int_{\Om} \int_{\Om} \frac{|u(x)|^{2^*_{\mu}}|u(y)|^{2^*_{\mu}}}{|x-y|^{\mu}} ~dx dy +o_n(1),\\& (a+\e^p\al^{2(\theta-1)})\|u\|^2= \la \int_{\Om}|u|^q dx+ \int_{\Om} \int_{\Om} \frac{|u(x)|^{2^*_{\mu}}|u(y)|^{2^*_{\mu}}}{|x-y|^{\mu}} ~dx dy +o_n(1). 
	\end{align*}
	As a result, we get $\|u_n\|^2 \ra \al^2= \|u\|^2$. Hence the proof follows. \QED
\end{proof}
	\begin{Proposition}\label{khprop1}
		Let $\lambda \in (0,\lambda_{0})$,  then there exists a  sequence $\{u_n\} \subset N_\la$ such that
		\begin{align*}
			J_\la(u_{n}) = \theta_{\lambda}+o_n(1) \text{ and } J_\la^{\prime}(u_{n}) = o_n(1).
		\end{align*}
	\end{Proposition}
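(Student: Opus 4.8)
The plan is to construct the Palais--Smale sequence for $J_\la$ at the level $\theta_\la$ by applying Ekeland's variational principle on the complete metric space $\overline{N_\la}$, and then upgrade the resulting ``almost minimizing, almost critical on the manifold'' sequence to a genuine $(PS)_{\theta_\la}$ sequence for the free functional using the differentiable map $\xi$ from Lemma \ref{khlem4}. First I would observe that $N_\la$ is not empty (it contains $t^{\pm}(u)u$ for suitable $u$ by Lemma \ref{khlem5}) and that by Lemma \ref{khlem2} the functional $J_\la$ is coercive and bounded below on $N_\la$, so $\theta_\la = \inf_{N_\la} J_\la$ is a well-defined real number; moreover by Lemma \ref{khlem3}, $\theta_\la \le \theta_\la^+ < 0$. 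Since $\lambda \in (0,\lambda_0)$ we have $N_\la^0 = \emptyset$ (Lemma \ref{khlem1}), hence $N_\la = N_\la^+ \cup N_\la^-$ and one checks that $\overline{N_\la} = N_\la \cup \{0\}$; because $\theta_\la < 0$ while $J_\la(0) = 0$, the minimizing sequences stay bounded away from $0$ and $\overline{N_\la}$ is effectively a complete metric space for the purpose of applying Ekeland.

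Next I would apply Ekeland's variational principle to obtain a sequence $\{u_n\} \subset N_\la$ with $J_\la(u_n) \le \theta_\la + \frac1n$ and $J_\la(v) \ge J_\la(u_n) - \frac1n\|v - u_n\|$ for all $v \in N_\la$. The coercivity of $J_\la$ on $N_\la$ forces $\{u_n\}$ to be bounded in $H^1_0(\Om)$, and the lower bound $\|u_n\| > \sigma$ (which follows for the $N_\la^-$ part from Lemma \ref{khlem6}(ii), and for the $N_\la^+$ part from the strict negativity of $\theta_\la$ together with the estimate in Lemma \ref{khlem2}) keeps them away from the origin. It remains to show $J_\la'(u_n) \to 0$ in $H^{-1}(\Om)$. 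For this, fix $v \in H^1_0(\Om)$ with $\|v\| = 1$; for small $t>0$ I would set $w_t := \xi_n(tv)(u_n - tv) \in N_\la$, where $\xi_n$ is the function given by Lemma \ref{khlem4} applied at $u_n$ (with $\xi_n(0) = 1$ and $\xi_n$ differentiable). Plugging $w_t$ into the Ekeland inequality, dividing by $t$, and letting $t \to 0^+$ gives, after a standard computation, a bound of the form $\langle J_\la'(u_n), v\rangle \le \frac{C}{n}(1 + \|\xi_n'(0)\|)$, and the same with $v$ replaced by $-v$, so it suffices to bound $\|\xi_n'(0)\|$ uniformly in $n$.

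The main obstacle, therefore, is the uniform bound on $\|\xi_n'(0)\|$. From formula \eqref{kh6} the numerator is controlled by $C(1 + \|u_n\|^{2\theta-1} + \|u_n\|_{NL}^{2 \cdot 2^*_\mu - 1})$, which is bounded since $\{u_n\}$ is bounded; the danger is that the denominator $(2-q)a\|u_n\|^2 + (2\theta - q)\e^p\|u_n\|^{2\theta} - (2\cdot2^*_\mu - q)\|u_n\|_{NL}^{2\cdot 2^*_\mu}$ may degenerate to zero. But this denominator is precisely $-\phi_{u_n}''(1)$ up to a positive factor (indeed it equals $\phi_{u_n}''(1)$ read off the second-derivative formula), and the claim is that it stays bounded away from $0$ along the minimizing sequence: if not, then along a subsequence $u_n$ would approach $N_\la^0$, and combining $\phi_{u_n}''(1) \to 0$ with $\phi_{u_n}'(1) = 0$ one recovers, exactly as in the proof of Lemma \ref{khlem1}, the lower bound \eqref{kh18} on $\|u_n\|$ and the conclusion $F_\la(u_n) \to$ a strictly positive quantity, contradicting $F_\la(u_n) = 0$ on $N_\la^0$ (or rather $F_\la(u_n) \to 0$ by continuity). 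Hence the denominator is bounded below by a positive constant $c_0 > 0$ independent of $n$, which yields $\|\xi_n'(0)\| \le C$ and therefore $\langle J_\la'(u_n), v\rangle = o_n(1)$ uniformly over $\|v\| = 1$, i.e. $J_\la'(u_n) = o_n(1)$ in $H^{-1}(\Om)$. This completes the construction of the desired sequence with $J_\la(u_n) = \theta_\la + o_n(1)$ and $J_\la'(u_n) = o_n(1)$.
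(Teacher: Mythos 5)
Your proposal is correct and follows essentially the same route as the paper: Ekeland's variational principle on the Nehari manifold (using coercivity from Lemma \ref{khlem2} and $\theta_\la\le\theta_\la^+<0$ from Lemma \ref{khlem3}), then the implicit-function map $\xi_n$ of Lemma \ref{khlem4} to convert the constrained almost-criticality into $\langle J_\la'(u_n),v\rangle\le \frac{C}{n}(1+\|\xi_n'(0)\|)$, and finally the uniform bound on $\|\xi_n'(0)\|$ via the observation that degeneration of the denominator $(2-q)a\|u_n\|^2+(2\theta-q)\e^p\|u_n\|^{2\theta}-(2.2^*_{\mu}-q)\|u_n\|_{NL}^{2.2^*_{\mu}}$ (which is exactly $\phi_{u_n}''(1)$ on $N_\la$) would force $F_\la(u_n)=o_n(1)$ while the Lemma \ref{khlem1} argument gives $F_\la(u_n)$ bounded away from zero, a contradiction. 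No substantive differences to report.
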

	\begin{proof}
		Using Lemma \ref{khlem2} and  Ekeland variational principle \cite{eke1974}, there exists a minimizing sequence $\{u_n\}\subset N_\la $ such that
		\begin{equation}\label{kh2}
		 J_\la(u_n) < \theta_\la +\frac{1}{n} \text{ and }
		 		J_\la(u_n)< J_\la(v)+\frac{1}{n}\|v-u_n\| \; \textrm{for all}\; v \in N_\la.
		\end{equation}
		For large $n$, using equation \eqref{kh2} and Lemma \ref{khlem3}, we have 
		\begin{align*}
		J_\la (u_n) < \theta_\la +\frac{1}{n}<\theta_\la^{+}<0.
		\end{align*}
		From the fact that $J_\la (u_n) <\theta_\la^{+}<0 $ and using  H\"older's inequality, Sobolev embedding, 
		\begin{align}\label{kh3}
		\left(\frac{\;(-\theta_\la^{+})2.2^*_{\mu}q S^{\frac{q}{2}}}{ (2.2^*_{\mu}-q) \la \|f\|_r} \right)^{\frac{1}{q}}\leq \|u_n\|\leq  \left(\frac{\;\la(2.2^*_{\mu}-q)\|f\|_r }{qa  (2^*_{\mu}-1)S^{\frac{q}{2}}}\right)^{\frac{1}{2-q}}.
			\end{align}		
		\noi Now, we prove that $\|J^{\prime}_\lambda(u_n)\|\rightarrow 0$ as $n\rightarrow \infty$. Applying Lemma \ref{khlem4}, for $u_n$ we obtain differentiable functions $\xi_n:\mathcal{B}(0, \de_n)\rightarrow \mathbb{R}$ for some $\de_n>0$ such that $\xi_n(v)(u_n-v)\in N_\la $,\; $\textrm{for all}\; v\in \mathcal{B}(0, \de_n).$
		\noindent Fix $n$, choose $0<\rho<\de_n$. Let $u\in H_0^1(\Om)$ with $u\not\equiv 0$ and let $v_\rho=\frac{\rho u}{\|u\|}$. We set $h_\rho=\xi_n(v_\rho)(u_n-v_\rho) \in N_\lambda$. Using \eqref{kh2}, we have 
		\begin{align*}
		J_\la(h_\rho)-J_\la(u_n)\geq-\frac{1}{n}\|h_\rho-u_n\|.
		\end{align*}
		Applying Mean Value Theorem , we get 
		\begin{align*}
		\langle J^{\prime}_{\la}(u_n),h_\rho-u_n \rangle+ o_n(\|h_\rho-u_n\|)\geq -\frac{1}{n}\|h_\rho-u_n\|.
		\end{align*}
		Thus,
		\begin{align*}
		\bigg\langle J^{\prime}_{\la}(u_n),\frac{u}{\|u\|} \bigg\rd \leq  \frac{\|h_\rho-u_n\|}{n \rho}+\frac{o_n(\|h_\rho-u_n\|)}{\rho}+\frac{(\xi_n(v\rho)-1)}{\rho}\langle J^{\prime}_{\la}(u_n)-J^{\prime}_{\la}(h\rho),u_n-v\rho \rangle. 
		\end{align*}
		\noi Since  $\displaystyle\lim_{n\rightarrow \infty}\frac{|\xi_n(v_\rho)-1|}{\rho}\leq \|\xi_n(0)\|$ and $\|h_\rho-u_n\|\leq \rho|\xi_n(v_\rho)|+|\xi_n(v_\rho)-1|\; \|u_n\|$.\\
		Therefore, taking  $\rho \ra 0$, we can find a constant $C>0$ independent of $\rho$, such that 
		\begin{equation*}
		\langle J^{\prime}_\lambda(u_n),\frac{u}{\|u\|}\rangle\leq\frac{C}{n}(1+\|\xi_n^{\prime}(0)\|).
		\end{equation*}
		Thus, if we can show that $\|\xi_n^{\prime}(0)\|$ is bounded then we are done. Now, using \eqref{kh6}, \eqref{kh3}, we can show that for some $K>0$, 
		\begin{equation*}
		|\langle \xi_n^{\prime}(0), v\rangle|\leq \frac{K\|v\|}{\bigg|(2-q)a \|u_n\|^{2} + (2\theta-q)\e^p \|u_n\|^{2\theta}-(2.2^*_{\mu}-q)\|u_n\|_{NL}^{2.2^*_{\mu}} \bigg|}.
		\end{equation*}
  Let if possible, there exists a subsequence $\{u_n\}$ of $\{u_n\}$(we still denote it by $\{u_n\}$) such that
		\begin{equation}\label{kh17}
		(2-q)a \|u_n\|^{2} + (2\theta-q)\e^p \|u_n\|^{2\theta}-(2.2^*_{\mu}-q)\|u_n\|_{NL}^{2.2^*_{\mu}} =o_n(1).
		\end{equation}
		From equation \eqref{kh17} and the fact that $u_n \in N_\la$, we get $F_\la(u_n)=o_n(1)$ ($F_\la$ defined in \eqref{flambda}) and
		\begin{equation*}
		\|u_n\| \geq   \left(\frac{(2-q)a S_{H,L}^{2^*_{\mu}}}{2.2^*_{\mu}-q}\right)^{\frac{1}{2.2^*_{\mu}-2}} +o_n(1).
		\end{equation*}
		Now analogous to  the proof of Lemma \ref{khlem1}, we get $F_\lambda(u_n)>0$ for large $n$, which is a contradiction. Hence $\{u_n\}$ is a Palais-Smale sequence for $J_\la$  at the level $\theta_\la$. 
	\QED
	\end{proof}
\begin{Remark}\label{khrem1}
	We remark that by following the proof of Proposition \ref{khprop1},  we can prove that if $\lambda \in (0,\lambda_{0})$, then there exists a  sequence $\{u_n\} \subset N_\la^- $ such that
	\begin{align*}
	J_\la(u_{n}) = \theta_{\lambda}^-+o_n(1) \text{ and } J_\la^{\prime}(u_{n}) = o_n(1).
	\end{align*}
\end{Remark}
	\section{Existence of First solution }
Choose $\la_1>0$  such that 
\begin{align*}
 \la^{\frac{2}{2-q}}\left(\frac{(2-q)(2\theta-q)}{4 \theta q}\right)   \left(\frac{ 2\theta-q }{2aS(\theta-1)}\right)^{\frac{q}{2-q}}\|f\|_{L^r}^{\frac{2}{2-q}}< \frac{N-\mu+2}{2(2N-\mu)}(aS_{H,L})^{\frac{2N-\mu}{N-\mu+2}},
\end{align*}
 whenever $0<\la<\la_1$.	Define 
	\begin{align*}
	\La^* = \min\{ \la_0,\la_1\}.
	\end{align*}
\textbf{Proof of Theorem \ref{khthm1}:}
	From Proposition \ref{khprop1}, there exists a minimizing sequence $\{u_n\} \subset N_\la$ such that
	\begin{align*}
\lim_{n\ra \infty}J_\la(u_{n}) = \theta_{\lambda}\leq \theta_{\lambda}^+<0  \text{ and } \lim_{n\ra \infty} J_\la^{\prime}(u_{n}) = 0.
	\end{align*}
By the choice of $\La^*$, we have
\begin{align*}
 c_{\infty}= \frac{N-\mu+2}{2(2N-\mu)}(aS_{H,L})^{\frac{2N-\mu}{N-\mu+2}}- \la^{\frac{2}{2-q}}\left(\frac{(2-q)(2\theta-q)}{4 \theta q}\right)   \left(\frac{ 2\theta-q }{2aS(\theta-1)}\right)^{\frac{q}{2-q}}\|f\|_r^{\frac{2}{2-q}}>0, 
\end{align*}  
	for all $0<\la<\La^*$. Therefore, $ \theta_\la<0< c_{\infty}$, this on using Proposition \ref{khprop2} gives us that  $\{u_n\}$ contains a convergent subsequence. That is, there exists  $u_1\in H_0^1(\Om)$ such that  $u_n \ra u_1 $ in $H_0^1(\Om)$. It implies that 
	\begin{align*}
	\lim_{n\ra \infty}J_\la(u_{n}) = \theta_{\lambda}= J_\la(u_1). 
	\end{align*}
	Hence $u_1$ is minimizer of $J_\la$  and $u_1 \in N_\la$ for $\la \in (0,\La^*)$. Also, $J_\la(u_1)<0$. Now we claim that $u_1
	\in N_\la^+$. On the contrary, let us assume that $u_1 \in N_\la^-$ then  from Lemma \ref{khlem5}, there exists  $t^+<t^-=1$ such that $t^+u_1 \in  N_\la^+$. Hence $\phi_{u_1}^{\prime}(t^+)=0,\; \phi_{u_1}^{\prime\prime}(t^+)>0$, so $t^+$ is local minimum of $\phi_{u_1}$. Therefore, there exists a $t^*\in (t^+,1)$ such that $J_\la(t^+u_1)<J_\la(t^*u_1 )$. Thus 
	\begin{align*}
	\theta_\la \leq J_\la(t^+u_1)<J_\la(t^*u_1 )\leq J_\la(u_1 ) = \theta_\la,
	\end{align*}
	which is not possible. Thus $u_1\in N_\la^+$ and $\theta_{\lambda}= \theta_\la^+=  J_\la(u_1)$. By using the same  arguments as in \cite[pp.281]{taren}, we get that  $u_1$ is a local minimum for $J_\la$.  Since $J_\la(u_1)=J_\la(|u_1|)$,  by Lemma  \ref{khlem6}, $u_1$ is non-negative solution of $(P_\la)$. Using \cite[Lemma 4.4]{yangjmaa}, we have $u_1\in L^{\infty}(\Om)$ and $u_1 \in C^2(\Om)$. Applying strong maximum principle we get that $u_1>0$ in $\Om$. 
	\QED

\section{Second Solution of $(P_\la)$}
\noi To prove the existence of second solution, we will show that the minimizer of the functional over $N_\la^-$ is achieved and forms the second solution. For this we  use the minimizers of the best constant $S_{H,L}$. From Lemma \ref{khlem7} we know that 
\begin{align*}
U_\e(x)= S^{\frac{(N-\mu)(2-N)}{4(N-\mu+2)}}(C(N,\mu))^{\frac{2-N}{2(N-\mu+2)}}\left(\frac{\e}{\e^2+|x|^2}\right)^{\frac{N-2}{2}}, \; 0<\e<1
\end{align*}
are the minimizers of $S_{H,L}$. Since, $f$ is a continuous function on $\Om$ and $f^+=\max\{f(x),0\}\not\equiv 0$, the set $\Sigma=\{ x \in \Om : f(x)>0 \}$ is an open set of positive measure. Without loss of generality, let us assume that $\Sigma$ is a domain and $0 \in \Sigma$.
 This implies there exists a $\de>0$ such that $B_{4\de}(0)\subset \Sigma \subseteq \Om$ and  $f(x)>0$ for all $x \in B_{2\de}(0)$.
 It implies that there exists a $m_f>0$ such that $f(x)>m_f$ for all $x \in B_{2\de}(0)$. Now  define $\eta\in C_c^{\infty}(\mathbb{R}^N)$ such that $0\leq \eta\leq 1$ in $\mathbb{R}^N$, $\eta\equiv1$ in $B_\de(0)$ and $\eta\equiv 0 $ in $\mathbb{R}^N \setminus B_{2\de}(0)$ and $|\na \eta |< C$. Let $u_\e\in  H_0^1(\Om)$ be defined as  $u_\e(x)= \eta(x) U_\e(x)$. Then we have the following:
\begin{Proposition}\label{khprop3}
	Let $N\geq 3,\; 0<\mu<N$ then the following holds:
	\begin{enumerate}
		\item [(i)] $\|u_\e\|^2 \leq  S_{H,L}^{\frac{2N-\mu}{N-\mu+2}}+O(\e^{N-2})$.
		\item [(ii)] $\|u_\e\|_{NL}^{2.2^*_{\mu}}\leq S_{H,L}^{\frac{2N-\mu}{N-\mu+2}}+O(\e^N)$.
		\item [(iii)] $\|u_\e\|_{NL}^{2.2^*_{\mu}}\geq S_{H,L}^{\frac{2N-\mu}{N-\mu+2}}-O(\e^N)$.
	\end{enumerate}
\end{Proposition}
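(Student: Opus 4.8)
The plan is to estimate the three quantities by comparing $u_\e = \eta U_\e$ with the exact extremal $U_\e$ of $S_{H,L}$ on $\mathbb{R}^N$, controlling the error introduced by the cutoff $\eta$. I will use the explicit profile $U_\e(x) = c_{N,\mu}\,(\e/(\e^2+|x|^2))^{(N-2)/2}$ together with the standard scaling-invariant identities: on all of $\mathbb{R}^N$ one has $\int|\nabla U_\e|^2 = \int\!\!\int |U_\e(x)|^{2^*_\mu}|U_\e(y)|^{2^*_\mu}/|x-y|^\mu = S_{H,L}^{(2N-\mu)/(N-\mu+2)}$, since $U_\e$ realizes $S_{H,L}$ and is normalized by the constants in Lemma \ref{khlem7}. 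The whole computation reduces to showing the cutoff perturbs each integral only by the stated power of $\e$.

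For (i): write $\|u_\e\|^2 = \int_{\mathbb{R}^N} |\nabla(\eta U_\e)|^2 = \int |\nabla U_\e|^2 - \int_{\mathbb{R}^N\setminus B_\delta}(1-\eta^2)|\nabla U_\e|^2 + \text{cross terms and } \int|\nabla\eta|^2 U_\e^2$. On the region $|x|\ge\delta$ we have the pointwise bounds $U_\e(x) = O(\e^{(N-2)/2})$ and $|\nabla U_\e(x)| = O(\e^{(N-2)/2})$, so every error term over $\{|x|\ge\delta\}$ is $O(\e^{N-2})$ (the integrand is bounded by $C\e^{N-2}$ on a fixed-size annulus, or one integrates the tail $\int_\delta^\infty \e^{N-2} r^{-(N-2)} r^{N-1}\,dr$-type expression, which is again $O(\e^{N-2})$ for $N\ge 3$; the borderline factor only produces a harmless constant). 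Hence $\|u_\e\|^2 \le S_{H,L}^{(2N-\mu)/(N-\mu+2)} + O(\e^{N-2})$.

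For (ii) and (iii): the key observation is that $|u_\e| \le U_\e$ pointwise, so by monotonicity of the double integral $\|u_\e\|_{NL}^{2\cdot 2^*_\mu} \le \int\!\!\int |U_\e(x)|^{2^*_\mu}|U_\e(y)|^{2^*_\mu}/|x-y|^\mu = S_{H,L}^{(2N-\mu)/(N-\mu+2)}$, which already gives (ii) with the $O(\e^N)$ error absorbed (indeed the inequality holds without any error term, but stating it with $O(\e^N)$ is harmless). For the lower bound (iii) one writes the Choquard integral for $U_\e$ as the sum of the integral for $u_\e$ plus the contributions over the region where $\eta<1$; since $u_\e = U_\e$ on $B_\delta$, each missing piece involves at least one factor supported in $\{|x|\ge\delta\}$, where $U_\e^{2^*_\mu} = O(\e^N)$ (as $2^*_\mu\cdot(N-2)/2 = N-\mu/2 \ge N/2$, and the remaining integration in the other variable against $|x-y|^{-\mu}$ over the bounded set $\Omega$ is finite). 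Estimating these cross terms by Hardy–Littlewood–Sobolev or directly yields an error of order $\e^N$ (possibly times a constant depending on $\mu$), giving $\|u_\e\|_{NL}^{2\cdot 2^*_\mu} \ge S_{H,L}^{(2N-\mu)/(N-\mu+2)} - O(\e^N)$.

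The main obstacle is (iii): unlike the seminorm in (i), the nonlocal term does not split additively, so one must carefully decompose $\int\!\!\int$ over the product regions $B_\delta\times B_\delta$, $B_\delta\times(\Omega\setminus B_\delta)$, $(\Omega\setminus B_\delta)\times\Omega$ and bound each off-diagonal block; keeping track that every such block carries at least one factor $U_\e^{2^*_\mu}$ integrated over $\{|x|\ge\delta\}$ is what produces the sharp exponent $\e^N$. I expect this to follow from the pointwise decay of $U_\e$ and the boundedness of $\int_\Omega |x-y|^{-\mu}\,dy$, exactly as in the corresponding Brezis–Nirenberg–type estimates for the Choquard problem in \cite{yang}.
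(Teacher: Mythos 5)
Your handling of (i) and (ii) is fine: (i) is the classical Brezis--Nirenberg cut-off computation, and your observation that $0\le u_\e\le U_\e$ pointwise makes (ii) immediate by monotonicity of the double integral (no error term is even needed). For the record, the paper does not prove this proposition at all --- it cites \cite{willem} for (i) and \cite{systemchoq} for (ii)--(iii) --- so you are reconstructing those arguments rather than competing with an in-text proof.

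The gap is in (iii), and it is quantitative. On $\{|x|\ge\delta\}$ one has $U_\e(x)^{2^*_{\mu}}=O\bigl(\e^{(2N-\mu)/2}\bigr)=O\bigl(\e^{N-\mu/2}\bigr)$, not $O(\e^{N})$ as you assert; your own exponent computation $2^*_{\mu}\cdot\frac{N-2}{2}=N-\frac{\mu}{2}$ shows this, and $N-\frac{\mu}{2}<N$ since $\mu>0$, so the parenthetical ``$\ge N/2$'' does not rescue the claim. Consequently, bounding a cross block by (far factor)$\times$(a merely finite remaining integral) yields only $O(\e^{N-\mu/2})$, short of the stated rate by a factor $\e^{\mu/2}$; the Hardy--Littlewood--Sobolev route you mention fares no better, since $\|U_\e^{2^*_{\mu}}\mathbf{1}_{\{|x|\ge\delta\}}\|_{L^{2N/(2N-\mu)}}=O(\e^{N-\mu/2})$ while $\|U_\e^{2^*_{\mu}}\|_{L^{2N/(2N-\mu)}}=O(1)$. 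To recover $\e^{N}$ you must extract the missing $\e^{\mu/2}$ from the \emph{other} factor: in the near--far block $B_{\delta/2}\times(\mathbb{R}^N\setminus B_{\delta})$ the kernel satisfies $|x-y|\ge\delta/2$, so the block is controlled by
\[
\Bigl(\tfrac{\delta}{2}\Bigr)^{-\mu}\int_{B_{\delta/2}}U_\e^{2^*_{\mu}}\,dx\;\int_{|y|\ge\delta}U_\e^{2^*_{\mu}}\,dy
=O(\e^{\mu/2})\cdot O(\e^{N-\mu/2})=O(\e^{N}),
\]
the point being that $2^*_{\mu}<2^*$, so the $L^1$-mass $\int_{\mathbb{R}^N}U_\e^{2^*_{\mu}}$ is not scale invariant and itself vanishes like $\e^{\mu/2}$; the far--far block is then $O(\e^{2N-\mu})=o(\e^{N})$ by HLS since $\mu<N$. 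Without this extra splitting your argument only gives $\|u_\e\|_{NL}^{2.2^*_{\mu}}\ge S_{H,L}^{\frac{2N-\mu}{N-\mu+2}}-O(\e^{N-\mu/2})$, which is strictly weaker than the statement and, for $\mu>4$, no longer suffices for the intermediate estimate in Lemma \ref{khlem10} where (iii) is consumed.
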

\begin{proof}
	For part $(i)$ See \cite[Lemma 1.46]{willem}. For $(ii)$ and $(iii)$ See \cite[Proposition 2.8]{systemchoq}.\QED
\end{proof}
\begin{Lemma}\label{khlem9}
	Let $\mu<\min\{4, N\}$ then there exists $\Upsilon^*>0$ and $\e^*>0$  such that for every  $\la \in (0,\Upsilon^*)$ and $\e \in (0,\e^*)$, we have 
	\begin{align*}
	\sup_{t\geq 0}J_\la(u_1+t u_\e)< c_{\infty},
	\end{align*}
	where $u_1$ is the local minimum of $J_\la$ obtained  in Theorem \ref{khthm1} \and $c_\infty$ is defined as in the Proposition \ref{khprop2}.
\end{Lemma}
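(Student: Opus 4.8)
The plan is to estimate $J_\la(u_1+tu_\e)$ from above by splitting the analysis according to whether $t$ is small or bounded away from $0$, and then use the subcriticality-type estimates from Proposition \ref{khprop3} together with the fact that $u_1$ is a positive solution. First I would observe that $\sup_{t\ge 0}J_\la(u_1+tu_\e)$ is attained at some $t_\e\ge 0$, because $J_\la(u_1+tu_\e)\to -\infty$ as $t\to\infty$ (the Choquard term grows like $t^{2\cdot 2^*_\mu}$, which dominates $t^{2\theta}$ since $\theta<2^*_\mu$) and $J_\la$ is continuous. I would then show that $t_\e$ stays in a compact interval $[t_0,t_1]\subset(0,\infty)$, uniformly in small $\e$ and small $\la$: the lower bound because at $t=0$ we only get $J_\la(u_1)=\theta_\la^+<0$ while for small $t>0$ the functional increases (here one uses $\int_\Om f|u_1|^q\,dx>0$ and the expansion of $J_\la(u_1+tu_\e)$ near $t=0$, noting $\langle J_\la'(u_1),u_\e\rangle=0$ since $u_1$ solves $(P_\la)$); the upper bound by coercivity-type control of the leading terms.

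Next I would write out the key expansion. Using $\langle J_\la'(u_1),tu_\e\rangle = 0$ and the elementary inequality
\[
\big(\|u_1+tu_\e\|^2\big)^{\theta} \le \|u_1\|^{2\theta} + C\big(t\|u_\e\|^{2\theta-1}+t^{2\theta}\|u_\e\|^{2\theta}\big),
\]
together with the Brezis–Lieb / Hardy–Littlewood–Sobolev type splitting of the nonlocal term
\[
\int_\Om\int_\Om\frac{|(u_1+tu_\e)(x)|^{2^*_\mu}|(u_1+tu_\e)(y)|^{2^*_\mu}}{|x-y|^{\mu}}\,dx\,dy
\ge \|u_1\|_{NL}^{2\cdot 2^*_\mu} + t^{2\cdot2^*_\mu}\|u_\e\|_{NL}^{2\cdot2^*_\mu} + (\text{cross terms}),
\]
I would absorb the contribution of $u_1$ to recover $J_\la(u_1)$ and bound the remainder by
\[
J_\la(u_1+tu_\e) \le J_\la(u_1) + \sup_{t\ge 0}\left( \frac{a}{2}t^2\|u_\e\|^2 + \frac{\e^p}{2\theta}Ct^{2\theta}\|u_\e\|^{2\theta} - \frac{t^{2\cdot 2^*_\mu}}{2\cdot2^*_\mu}\|u_\e\|_{NL}^{2\cdot2^*_\mu}\right) + (\text{error}),
\]
where the cross terms and the $\la$-term are the errors to be controlled. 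The maximum of the displayed function of $t$ is, by Proposition \ref{khprop3}(i)--(iii), of the form $\frac{N-\mu+2}{2(2N-\mu)}(aS_{H,L})^{\frac{2N-\mu}{N-\mu+2}} + O(\e^{N-2}) + O(\e^p)(\cdots)$. Since $J_\la(u_1)=\theta_\la^+\le -C_0\la^{2/(2-q)}$ by Lemma \ref{khlem3} (for a suitable positive constant, using $C=\|t^+u_0\|^2$ bounded below), this will beat $c_\infty = \frac{N-\mu+2}{2(2N-\mu)}(aS_{H,L})^{\frac{2N-\mu}{N-\mu+2}} - \widehat D\la^{2/(2-q)}$ once the positive error terms are dominated.

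The main obstacle is the delicate bookkeeping of the cross terms in the nonlocal nonlinearity and the correct order (in $\e$) of the lower-order integrals $\int u_1 u_\e$, $\int f|u_\e|^q$, and the mixed Choquard integrals — exactly the point the authors flag as needing "various inequalities especially when $2^*_\mu\in(2,3)$" (which is why the hypothesis $\mu<\min\{4,N\}$ enters: it guarantees $2^*_\mu > 2$, or rather controls the integrability exponents so the cross terms are genuinely lower order than $\e^{N-2}$ and than $\la^{2/(2-q)}$). The strategy to resolve it: bound each cross term by $C\e^{(N-2)/2}$ times a power of $t$ (using the explicit form of $U_\e$ and $u_\e=\eta U_\e$) and then use Young's inequality to split it between an $O(\e^{N-2})$ piece and a piece absorbed into $J_\la(u_1)$'s negativity; finally choose $\Upsilon^*$ small enough that $\widehat D\la^{2/(2-q)} + C_0\la^{2/(2-q)} $ exceeds the accumulated $\e$-errors, and then $\e^*$ small enough (depending on the chosen $\la$-range) that $O(\e^{N-2})+O(\e^p)$ is below that margin. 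Since $p>N-2$, the $\e^p$ terms are harmless compared to $\e^{N-2}$, so in the end the threshold is governed by the $O(\e^{N-2})$ term versus the $\la$-gap, giving the required relation between $\la$ and $\e$.
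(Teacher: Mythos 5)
There is a genuine gap: your proposal gets the source of the strict inequality wrong. You propose to beat the correction term $-\widehat D\la^{2/(2-q)}$ in $c_\infty$ by invoking the negativity of $J_\la(u_1)=\theta_\la^+\le -C_0\la^{2/(2-q)}$, "using $C=\|t^+u_0\|^2$ bounded below". But $\|t^+u_0\|$ is \emph{not} bounded below uniformly in $\la$: since $t^+u_0\in N_\la^+$ forces $\mathcal S_{u_0}(t^+)=\la\int f|u_0|^q\to 0$ and $\mathcal S_{u_0}$ is increasing and positive on $(0,t_{\max})$, one has $t^+\to 0$ as $\la\to 0$, so the constant in Lemma \ref{khlem3} degenerates. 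Even granting the correct rate $\theta_\la^+\sim -C_0\la^{2/(2-q)}$, your argument needs $C_0>\widehat D$ (plus domination of all positive errors), and nothing in the paper or in your sketch establishes this comparison of constants; $\widehat D$ comes from a non-sharp Young inequality in Proposition \ref{khprop2} and there is no reason the two constants line up favorably. The paper's proof does not use the negativity of $J_\la(u_1)$ at all (it simply discards it): the engine of the strict inequality is the \emph{mixed} Choquard term
\[
-\widehat C\,t^{2\cdot 2^*_\mu-1}\int_\Om\int_\Om\frac{(u_\e(x))^{2^*_\mu}(u_\e(y))^{2^*_\mu-1}u_1(y)}{|x-y|^\mu}\,dx\,dy\;\le\;-\,\widehat C\,r_1\,T_{00}^{2\cdot2^*_\mu-1}\,\e^{\frac{N-2}{2}},
\]
which is of strictly lower order than every positive error ($O(\e^{N-2})$, $O(\e^p)$, and the $o(\e^{(N-2)/2})$ remainders $A^i_\e$), and which, after the coupling $\e^\beta\sim\la^{2/(2-q)}$ with $\beta>\frac{N-2}{2}$, also dominates $\widehat D\la^{2/(2-q)}$. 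Your proposal instead lists this cross term among the errors "of order $\e^{(N-2)/2}$" to be split by Young's inequality and absorbed, which throws away exactly the term that wins.

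A secondary, related gap is the case $2<2^*_\mu\le 3$ (equivalently $\mu$ near $4$ in low dimensions): keeping the mixed term with a \emph{positive} coefficient in the lower bound for $\|u_1+tu_\e\|_{NL}^{2\cdot2^*_\mu}$ is precisely what fails for the naive binomial expansion when the exponent is $\le 3$. The paper handles this with the Brezis--Nirenberg two-sided inequality \eqref{nh21} and a decomposition of $\Om\times\Om$ into the four regions $O_1,\dots,O_4$ according to where $u_1\gtrless tu_\e$, showing each remainder is $O(\e^{(\frac{2N-\mu}{4})\Theta})=o(\e^{\frac{N-2}{2}})$ for a suitable $\Theta\in(2/2^*_\mu,1)$ (this is where $\mu<4$ is used). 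Your sketch acknowledges the delicacy but supplies no mechanism that would preserve the sign and the $\e^{(N-2)/2}$ order of the crucial cross term in that range.
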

\begin{proof} 
	From the definition, $u_\e(x)\geq 0$ for all $x \in \mathbb{R}^N$. Let $0<\e<\de$ then $u_\e= U_\e$ in $B_\e(0)$.\\ 
	\textbf{ claim 1:} There exists a $r_1>0$ such that 
	\begin{align*}
	I= \int_{\Om}\int_{\Om}\frac{(u_\e(x))^{2^{*}_{\mu}}(u_\e(y))^{2^{*}_{\mu}-1}u_1(y)}{|x-y|^{\mu}}~dxdy\geq r_1\e^{\frac{N-2}{2}}.
	\end{align*} 
Actually,
	\begin{align*}
	I& \geq \ds \int_{B_\e(0)}\int_{B_\e(0)}\frac{(u_\e(x))^{2^{*}_{\mu}}(u_\e(y))^{2^{*}_{\mu}-1}u_1(y)}{|x-y|^{\mu}}~dxdy\\
	& \geq C  \ds \int_{B_\e(0)}\int_{B_\e(0) }\frac{(U_\e(x))^{2^{*}_{\mu}}(U_\e(y))^{2^{*}_{\mu}-1}}{|x-y|^{\mu}}~dxdy\\
	&\geq  C \ds \int_{B_\e(0)}\int_{B_\e(0)}\frac{\e^{\frac{3N}{2}+1-\mu}~dxdy}{|x-y|^{\mu} (\e^2+|x|^2)^{\frac{2N-\mu}{2}}(\e^2+|y|^2)^{\frac{N-\mu+2}{2}}}\\
	&\geq  C \e^{\frac{N-2}{2}}\ds \int_{B_1(0)}\int_{B_1(0)}\frac{dxdy}{|x-y|^{\mu} (1+|x|^2)^{\frac{2N-\mu}{2}}(1+|y|^2)^{\frac{N-\mu+2}{2}}}= O(\e^{\frac{N-2}{2}}).
	\end{align*}
	This proves the claim 1.  To get the estimate of  $\|u_1+t u_\e\|_{NL}^{2.2^*_{\mu}}$, we divide the proof into  two cases:\\
	\textbf{Case 1:} $ 2^{*}_{\mu} > 3$.\\
	It is easy to see that  there exists $\widehat{A}>0$ such that 
	\begin{align*}
	(a+b)^p\geq a^p+b^p+pa^{p-1}b+\widehat{A}ab^{p-1} \text{ for all } a,\;b \geq 0 \text{ and } p>3, 
	\end{align*}
	which  implies that 
	\begin{align*}
	\|u_1+t u_\e\|_{NL}^{2.2^*_{\mu}}	& \geq \|u_1\|_{NL}^{2.2^*_{\mu}}+\|t u_\e\|_{NL}^{2.2^*_{\mu}} + \widehat{C} t^{2.2^*_{\mu}-1} \int_{\Om}\int_{ \Om}\frac{(u_\e(x))^{2^*_{\mu}}(u_\e(y))^{2^*_{\mu}-1}u_1(y)}{|x-y|^{\mu}}~dxdy\\& \quad + 2.2^*_{\mu}t  \int_{\Om}\int_{ \Om}\frac{(u_1(x))^{2^*_{\mu}}(u_1(y))^{2^*_{\mu}-1}u_\e(y)}{|x-y|^{\mu}}~dxdy, \text{ where } \widehat{C}= \min\{\widehat{A}, 2.2^*_\mu\}.
	\end{align*}
	\textbf{Case 2:} $2< 2^{*}_{\mu} \leq 3$.\\
	In this case, we claim that 
	\begin{align}\label{nh22}
	\|u_1+t u_\e\|_{NL}^{2.2^*_{\mu}}	& 
	\geq \|u_1\|_{NL}^{2.2^*_{\mu}}+\|t u_\e\|_{NL}^{2.2^*_{\mu}} +\widehat{C} t^{2.2^*_{\mu}-1} \int_{\Om}\int_{ \Om}\frac{(u_\e(x))^{2^*_{\mu}}(u_\e(y))^{2^*_{\mu}-1}u_1(y)}{|x-y|^{\mu}}~dxdy\\& \quad + 2.2^*_{\mu} t \int_{\Om}\int_{ \Om}\frac{(u_1(x))^{2^*_{\mu}}(u_1(y))^{2^*_{\mu}-1}u_\e(y)}{|x-y|^{\mu}}~dxdy- O(\e^{(\frac{2N-\mu}{4}) \Theta}) , \nonumber
	\end{align}  for all $ \Theta\in (0,1)$.\\
	\noi We recall the inequality from Lemma 4 of \cite{brezis1}: there exist $ C$(depending on $2^*_{\mu}$) such that, for all $a,b\geq 0$, 
	\begin{align}\label{nh21}
	(a+b)^{2^*_{\mu}}\geq   \left\{
	\begin{array}{ll}
	a^{2^*_{\mu}}+b^{2^*_{\mu}}+2^*_{\mu}a^{2^*_{\mu}-1}b+ 2^*_{\mu}ab^{2^*_{\mu}-1} -C ab^{2^*_{\mu}-1} &  \text{ if } a\geq b , \\
	a^{2^*_{\mu}}+b^{2^*_{\mu}}+2^*_{\mu}a^{2^*_{\mu}-1}b+ 2^*_{\mu}ab^{2^*_{\mu}-1} -C a^{2^*_{\mu}-1}b &  \text{ if } a\leq b, \\
	\end{array} 
	\right. \end{align}	
	Consider $\Om\times \Om= O_1 \cup O_2 \cup O_3 \cup O_4$, where
	\begin{align*}
	& O_1= \{(x,y) \in \Om\times \Om  \mid u_1(x)\geq tu_\e(x) \text{ and } u_1(y)\geq tu_\e(y) \},\\
	& O_2= \{(x,y) \in \Om\times \Om  \mid u_1(x)\geq tu_\e(x) \text{ and } u_1(y)< tu_\e(y) \},\\
	& O_3= \{(x,y) \in \Om\times \Om  \mid u_1(x)< tu_\e(x) \text{ and } u_1(y)\geq tu_\e(y) \}, \\
	& O_4= \{(x,y) \in \Om\times \Om  \mid u_1(x)< tu_\e(x) \text{ and } u_1(y)< tu_\e(y) \}.
	\end{align*} 
	
	Also, define  $b(u)|_{O_i}= \ds \int\int_{O_i}\frac{(u(x))^{2^{*}_{\mu}}(u(y))^{2^{*}_{\mu}}}{|x-y|^{\mu}}~dxdy $,  for all $u \in H_0^1(\Om)$ and $i=1,2,3,4$.
	\textbf{Subcase 1:} when $(x,y) \in O_1$. \\
	From \eqref{nh21}, we have 
	\begin{align*}
b(u_1+t u_\e)_{|O_1} & \geq  b(u_1)_{|O_1} +b(t u_\e)_{|O_1} +2.2^*_{\mu} t^{2.2^*_{\mu}-1} \int\int_{O_1}\frac{(u_\e(x))^{2^*_{\mu}}(u_\e(y))^{2^*_{\mu}-1}u_1(y)}{|x-y|^{\mu}}dxdy\\
	& \quad + 2.2^*_{\mu} t \iint_{O_1} \frac{(u_1(x))^{2^*_{\mu}}(u_1(y))^{2^*_{\mu}-1}u_\e(y)}{|x-y|^{\mu}}~dxdy - A^1_\e,
	\end{align*}
	where $A^1_\e$ is sum of eight non-negative integrals and  each integral  has an  upper bound of the form  $ C \int\int_{O_1}\frac{u_1(x)(tu_\e(x))^{2^*_{\mu}-1} (u_1(y))^{2^*_{\mu}}}{|x-y|^{\mu}}~dxdy$ or $ C \int\int_{O_1}\frac{u_1(y)(tu_\e(y))^{2^*_{\mu}-1} (u_1(x))^{2^*_{\mu}}}{|x-y|^{\mu}}~dxdy$.
		 \begin{align*}
		 A^1_\e \leq C \int\int_{O_1}\frac{u_1(x)(u_\e(x))^{2^*_{\mu}-1} (u_1(y))^{2^*_{\mu}}}{|x-y|^{\mu}}~dxdy +C \int\int_{O_1}\frac{u_1(x)(u_\e(x))^{2^*_{\mu}-1} (u_\e(y))^{2^*_{\mu}}}{|x-y|^{\mu}}~dxdy 
		 \end{align*}
	Now write $(tu_\e(x))^{2^*_{\mu}-1}= (tu_\e(x))^{r}. (tu_\e(x))^{s}$ with $2^*_{\mu}-1= r+s$  and  $0<s<\frac{2^*_{\mu}}{2}$ then 
	utilizing the definition of $O_1$ and  the fact that  $u_1 \in L^{\infty}(\Om)$,  we have  	
	\begin{equation*}
	\begin{aligned}
	\ds \int\int_{O_1}   \frac{u_1(x)(tu_\e(x))^{2^*_{\mu}-1} (u_1(y))^{2^*_{\mu}}}{|x-y|^{\mu}}~dxdy  &  \leq C \int\int_{O_1}\frac{(u_1(x))^{1+r}(tu_\e(x))^{s} (u_1(y))^{2^*_{\mu}}}{|x-y|^{\mu}}~dxdy  \\
	&\leq  C \int_{\Om}\int_{\Om}\frac{(tu_\e(x))^{s} }{|x-y|^{\mu}}~dxdy \\
	&\leq  C \int_{\Om}\int_{\Om}\frac{  \e^{\frac{s(N-2)}{2}}   }{|x-y|^{\mu} |x|^{s(N-2)}}~dxdy \\
	& \leq C \e^{\frac{s(N-2)}{2}} \left( \ds \int_{\Om}
	\frac{dx}{|x|^{\frac{s(2N)(N-2)}{2N-\mu}}}
	\right)^{\frac{2N-\mu}{2N}} \\
	& \leq C \e^{\frac{s(N-2)}{2}} \left( \ds \int_{\Om}
	\frac{dx}{|x|^{\frac{s(2N)(N-2)}{2N-\mu}}}
	\right)^{\frac{2N-\mu}{2N}}.
	\end{aligned}
	\end{equation*}
	By the choice of $s$, we have  $\ds \int_{\Om}
	\frac{dx}{|x|^{\frac{s(2N)(N-2)}{2N-\mu}}} < \infty$. As a result, we get
	\begin{align*}
	\ds \int\int_{O_1}\frac{u_1(x)(tu_\e(x))^{2^*_{\mu}-1} (u_1(y))^{2^*_{\mu}}}{|x-y|^{\mu}}~dxdy \leq O(\e^{(\frac{2N-\mu}{4}) \Theta}) \text{ for all } \Theta \in(0,1).
	\end{align*}	 
	In a similar manner, we have  
	\begin{align*}
	\ds C \int\int_{O_1}\frac{u_1(y)(tu_\e(y))^{2^*_{\mu}-1} (u_1(x))^{2^*_{\mu}}}{|x-y|^{\mu}}~dxdy \leq O(\e^{(\frac{2N-\mu}{4}) \Theta}) \text{ for all } \Theta \in (0,1).
	\end{align*}	 
	\noi  \textbf{Subcase 2:} when $(x,y) \in O_2$. \\
	Again in consequence of  \eqref{nh21}, we have 
	\begin{align*}
b(u_1+t u_\e)_{|O_2} & \geq  b(u_1)_{|O_2} +b(t u_\e)_{|O_2} +2.2^*_{\mu} t^{2.2^*_{\mu}-1} \int\int_{O_2}\frac{(u_\e(x))^{2^*_{\mu}}(u_\e(y))^{2^*_{\mu}-1}u_1(y)}{|x-y|^{\mu}}dxdy\\& \quad + 2.2^*_{\mu} t \iint_{O_2} \frac{(u_1(x))^{2^*_{\mu}}(u_1(y))^{2^*_{\mu}-1}u_\e(y)}{|x-y|^{\mu}}~dxdy - A^2_\e,
	\end{align*}
	where $A^2_\e$ is sum of eight non-negative integrals and  each integral  has an  upper bound of the form  $ C \int\int_{O_2}\frac{u_1(x)(tu_\e(x))^{2^*_{\mu}-1} (u_\e(y))^{2^*_{\mu}}}{|x-y|^{\mu}}~dxdy$ or $ C \int\int_{O_2}\frac{(u_1(y))^{2^*_{\mu}-1}(tu_\e(y))(u_1(x))^{2^*_{\mu}}}{|x-y|^{\mu}}~dxdy$.
	By the similar estimates as in Subcase 1 and taking in account the definition of $O_2$ and the fact that  $u_1\in L^{\infty}(\Om)$,  we have  
	\begin{align*}
	\int\int_{O_2}\frac{u_1(x)(tu_\e(x))^{2^*_{\mu}-1} (u_\e(y))^{2^*_{\mu}}}{|x-y|^{\mu}}~dxdy \leq O(\e^{(\frac{2N-\mu}{4}) \Theta}) \text{ for all } \Theta \in (0,1).
	\end{align*}
	Write $(u_1(y))^{2^*_{\mu}-1}= (u_1(y))^{r}. (u_1(y))^{s}$ with $2^*_{\mu}-1= r+s$ and $0<1+s<\frac{2^*_{\mu}}{2}$ then 
	in consequence of  the definition of $O_2$ and  the fact that $u_1\in L^{\infty}(\Om)$,  we have  the following estimates 
	\begin{equation*}
	\begin{aligned}
	\int\int_{O_2} \frac{(u_1(y))^{2^*_{\mu}-1}(tu_\e(y))(u_1(x))^{2^*_{\mu}}}{|x-y|^{\mu}}~dxdy
&	\leq \int\int_{O_2}\frac{(u_1(y))^{r}(tu_\e(y))^{1+s}(u_1(x))^{2^*_{\mu}}}{|x-y|^{\mu}}~dxdy\\
	&\leq  C \int_{\Om}\int_{\Om}\frac{(tu_\e(y))^{1+s}(u_1(x))^{2^*_{\mu}}}{|x-y|^{\mu}}~dxdy\\
	&\leq  C \int_{\Om}\int_{\Om}\frac{  \e^{\frac{(1+s)(N-2)}{2}}   }{|x-y|^{\mu} |y|^{(1+s)(N-2)}}~dxdy \\
	&\leq  C   \e^{\frac{(1+s)(N-2)}{2}} \left( \ds \int_{\Om}
	\frac{dy}{|y|^{\frac{(1+s)(2N)(N-2)}{2N-\mu}}}
	\right)^{\frac{2N-\mu}{2N}}\\
	& \leq C \e^{\frac{(1+s)(N-2)}{2}} \left( \ds \int_{\Om}
	\frac{dy}{|y|^{\frac{(1+s)(2N)(N-2)}{2N-\mu}}}
	\right)^{\frac{2N-\mu}{2N}}.
	\end{aligned}
	\end{equation*}
	By the choice of  $s$, we have  $ \ds \int_{\Om}
	\frac{dx}{|x|^{\frac{(1+s)(2N)(N-2)}{2N-\mu}}} < \infty$. Hence we obtain 
	\begin{align*}
	\int\int_{O_2}\frac{(u_1(y))^{2^*_{\mu}-1}(tu_\e(y))(u_1(x))^{2^*_{\mu}}}{|x-y|^{\mu}}~dxdy \leq O(\e^{(\frac{2N-\mu}{4}) \Theta}) \text{ for all } \Theta \in (0,1).
	\end{align*}	 
	\noi  \textbf{Subcase 3:} when $(x,y) \in O_3$. \\
	Again from  \eqref{nh21}, we have  the following inequality
	\begin{align*}
	b(u_1+t u_\e)_{|O_3} & \geq  b(u_1)_{|O_3} +b(t u_\e)_{|O_3} +2.2^*_{\mu} t^{2.2^*_{\mu}-1} \int\int_{O_3}\frac{(u_\e(x))^{2^*_{\mu}}(u_\e(y))^{2^*_{\mu}-1}u_1(y)}{|x-y|^{\mu}}dxdy\\& \quad + 2.2^*_{\mu} t \int\int_{O_3} \frac{(u_1(x))^{2^*_{\mu}}(u_1(y))^{2^*_{\mu}-1}u_\e(y)}{|x-y|^{\mu}}~dxdy - A^3_\e,
	\end{align*}
	where $A^3_\e$ is sum of eight non-negative integrals and  each integral  has an  upper bound of the form $ C \int\int_{O_3}\frac{(u_1(x))^{2^*_{\mu}-1}(tu_\e(x)) (u_1(y))^{2^*_{\mu}}}{|x-y|^{\mu}}~dxdy$ or  $ C \int\int_{O_3}\frac{u_1(y)(tu_\e(y))^{2^*_{\mu}-1} (u_\e(x))^{2^*_{\mu}}}{|x-y|^{\mu}}~dxdy$.
	By the similar estimates as in Subcase 2, definition of $O_3$ and  regularity of $u_1$,  we have  
	\begin{align*}
	\int\int_{O_3}\frac{(u_1(x))^{2^*_{\mu}-1}(tu_\e(x)) (u_1(y))^{2^*_{\mu}}}{|x-y|^{\mu}}~dxdy \leq O(\e^{(\frac{2N-\mu}{4}) \Theta}) \text{ for all } \Theta \in (0,1).
	\end{align*}
	Also adopting the  estimates as in Subcase 1,  we have  
	\begin{align*}
	\int\int_{O_3}\frac{u_1(y)(tu_\e(y))^{2^*_{\mu}-1} (u_\e(x))^{2^*_{\mu}}}{|x-y|^{\mu}}~dxdy  \leq O(\e^{(\frac{2N-\mu}{4}) \Theta}) \text{ for all } \Theta\in (0,1).
	\end{align*}
	\noi  \textbf{Subcase 4:} when $(x,y) \in O_4$. \\
	As a result of \eqref{nh21}, we have 
	\begin{align*}
b(u_1+t u_\e)_{|O_4} & \geq  b(u_1)_{|O_4} +b(t u_\e)_{|O_4} +2.2^*_{\mu} t^{2.2^*_{\mu}-1} \int\int_{O_4}\frac{(u_\e(x))^{2^*_{\mu}}(u_\e(y))^{2^*_{\mu}-1}u_1(y)}{|x-y|^{\mu}}dxdy\\& \quad + 2.2^*_{\mu} t \int\int_{O_4} \frac{(u_1(x))^{2^*_{\mu}}(u_1(y))^{2^*_{\mu}-1}u_\e(y)}{|x-y|^{\mu}}~dxdy - A^4_\e,
	\end{align*}
	where $A^4_\e$ is sum of eight non-negative integrals and  each integral  has an  upper bound of the form $ C \int\int_{O_4}\frac{(u_1(x))^{2^*_{\mu}-1}(tu_\e(x)) (tu_\e(y))^{2^*_{\mu}}}{|x-y|^{\mu}}~dxdy$ or  $ C \int\int_{O_4}\frac{u_1(y)(tu_\e(y))^{2^*_{\mu}-1} (u_\e(x))^{2^*_{\mu}}}{|x-y|^{\mu}}~dxdy$.
	By the similar estimates as in Subcase 2, we have  
	\begin{align*}
	A^4_\e \leq O(\e^{(\frac{2N-\mu}{4}) \Theta}) \text{ for all } \Theta \in (0,1).
	\end{align*}
	From all subcases we obtain $ A^i_\e \leq O(\e^{(\frac{2N-\mu}{4}) \Theta}) \text{ for all } \Theta \in (0,1) \; \text{ and } i=1,2,3,4.$
	Combining all the subcases we get \eqref{nh22}. It completes the proof of claim.  Now 	 
	Combining  case 1 and case 2,  we get 
	\begin{align*}
	J_\la(u_1+t u_\e)  \leq & \frac{1}{2}a\|u_1+t u_\e\|^2 + \frac{\e^p}{2\theta}\|u_1+t u_\e\|^{2\theta} -\int_{\Om}f(x)|u_1+t u_\e|^q~dx\\&   -\widehat{C}t^{2.2^*_{\mu}-1} \int_{\Om}\int_{ \Om}\frac{(u_\e(x))^{2^*_{\mu}}(u_\e(y))^{2^*_{\mu}-1}u_1(y)}{|x-y|^{\mu}}~dxdy - \frac{1}{2.2^*_{\mu}} \|u_1\|_{NL}^{2.2^*_{\mu}} \\& 
	-\frac{1}{2.2^*_{\mu}} \|t u_\e\|_{NL}^{2.2^*_{\mu}} 	- t\int_{\Om}\int_{ \Om}\frac{(u_1(x))^{2^*_{\mu}}(u_1(y))^{2^*_{\mu}-1}u_\e(y)}{|x-y|^{\mu}}~dxdy+ O(\e^{(\frac{2N-\mu}{4}) \Theta}),
	\end{align*}
	for all  $\Theta <1.$ Taking $\Theta = \frac{2}{2^*_{\mu}}$, we have 
	\begin{align*}
	J_\la(u_1+t u_\e)  \leq & \frac{1}{2}a\|u_1+t u_\e\|^2 + \frac{\e^p}{2\theta}\|u_1+t u_\e\|^{2\theta} -\int_{\Om}f(x)|u_1+t u_\e|^q~dx\\&   -\widehat{C}t^{2.2^*_{\mu}-1} \int_{\Om}\int_{ \Om}\frac{(u_\e(x))^{2^*_{\mu}}(u_\e(y))^{2^*_{\mu}-1}u_1(y)}{|x-y|^{\mu}}~dxdy - \frac{1}{2.2^*_{\mu}} \|u_1\|_{NL}^{2.2^*_{\mu}} \\& 
	-\frac{1}{2.2^*_{\mu}} \|t u_\e\|_{NL}^{2.2^*_{\mu}} 	- t\int_{\Om}\int_{ \Om}\frac{(u_1(x))^{2^*_{\mu}}(u_1(y))^{2^*_{\mu}-1}u_\e(y)}{|x-y|^{\mu}}~dxdy+ o(\e^{\frac{N-2}{2} }) .
	\end{align*}
	Observe that $\bigg|\ds \int_{ \Om}\na u_1\cdot \na tu_\e~dx\bigg| \leq \|u_1\|\|tu_\e\|$. Therefore for some $\al \in [0,2\pi]$, we have 
	\begin{align*}
\int_{ \Om}\na u_1\cdot \na tu_\e~dx= \|u_1\|\|tu_\e\| \cos \al.
	\end{align*}
	It implies 
	\begin{align*}
	\|u_1+t u_\e\|^{2\theta}& = \left(
	\|u_1\|^2+\|tu_\e\|^2+2\int_{ \Om}\na u_1\cdot \na tu_\e~dx\right)^\theta\\& = \left(
	\|u_1\|^2+\|tu_\e\|^2+2\|u_1\|\|tu_\e\| \cos \al \right)^\theta.
	\end{align*}
	Now we use the following one-dimensional inequality: for all $y\geq 0,\; \al \in [0,2\pi]$, there exists a uniform $R>0$ such that 
	\begin{equation}\label{kh8}
	\begin{aligned}
	(1+y^2+2y\cos\al)^\theta \leq  \left\{
	\begin{array}{ll}
	1+y^{2\theta}+2\theta y \cos\al +Ry^{2}, &  \text{ if }1\leq \theta < \frac{3}{2},\\
	1+y^{2\theta}+2\theta y \cos\al +R(y^{2\theta-1}+y^2) & \text{ if } \theta\geq \frac{3}{2}.\\
	\end{array} 
	\right.
	\end{aligned}	
	\end{equation}
	Using \eqref{kh8} with $y= \frac{\|tu_\e\|}{\|u_1\|}$, we have the following uniform estimate
	\begin{equation}
	\begin{aligned}\label{kh11}
	\|u_1+t u_\e\|^{2\theta}&= \left(
	\|u_1\|^2+\|tu_\e\|^2+2\|u_1\|\|tu_\e\| \cos \al \right)^\theta\\
	& \leq \|u_1\|^{2\theta} + \|t u_\e\|^{2\theta} + 2\theta  \|u_1\|^{2\theta-1}\|t u_\e\| \cos\al +  R\|u_1\|\left(\|u_\e\|^2 + \|u_\e\|^{2\theta-1}\right)\\
	&=  \|u_1\|^{2\theta} + \|t u_\e\|^{2\theta} + 2\theta  \|u_1\|^{2\theta-2} \int_{ \Om}\na u_1\cdot \na tu_\e~dx + R C_0\left(\|u_\e\|^2 + \|u_\e\|^{2\theta-1}\right)\\
	\end{aligned}
	\end{equation}
where $C_0= \|u_1\|$.
	Employing \eqref{kh11}, we obtain the subsequent estimates 
	\begin{align*}
	J_\la(u_1+t u_\e)  \leq & \frac{a}{2}\|u_1\|^2+  \frac{a}{2}\|t u_\e\|^2 +a \int_{ \Om}\na u_1\cdot \na tu_\e~dx+ \frac{\e^p}{2\theta}\|u_1\|^{2\theta}+ \frac{\e^p}{2\theta}\|t u_\e\|^{2\theta}\\
	&  + \e^p\|u_1\|^{2\theta-2} \int_{ \Om}\na u_1\cdot \na tu_\e~dx+ \e^p   R C_0\left(\|u_\e\|^2 + \|u_\e\|^{2\theta-1}\right)\\
	& -\int_{\Om}f(x)|u_1+t u_\e|^q~dx   - \frac{1}{2.2^*_{\mu}} \|u_1\|_{NL}^{2.2^*_{\mu}}  
	-\frac{1}{2.2^*_{\mu}} \|t u_\e\|_{NL}^{2.2^*_{\mu}} \\
	&  -\widehat{C}t^{2.2^*_{\mu}-1} \int_{\Om}\int_{ \Om}\frac{(u_\e(x))^{2^*_{\mu}}(u_\e(y))^{2^*_{\mu}-1}u_1(y)}{|x-y|^{\mu}}~dxdy \\
	& 	- t\int_{\Om}\int_{ \Om}\frac{(u_1(x))^{2^*_{\mu}}(u_1(y))^{2^*_{\mu}-1}u_\e(y)}{|x-y|^{\mu}}~dxdy+ o(\e^{\frac{N-2}{2} }).
	\end{align*}	
	Now making use of the facts that $u_1$ solves $(P_\la)$ and $J_\la(u_1)<0$, we have 
	\begin{align*}
	J_\la(u_1+t u_\e)  \leq &  J_\la(u_1) +\frac{a}{2}\|t u_\e\|^2 -\frac{1}{2.2^*_{\mu}} \|t u_\e\|_{NL}^{2.2^*_{\mu}}     \\&-\widehat{C}t^{2.2^*_{\mu}-1} \int_{\Om}\int_{ \Om}\frac{(u_\e(x))^{2^*_{\mu}}(u_\e(y))^{2^*_{\mu}-1}u_1(y)}{|x-y|^{\mu}}~dxdy  
	  -\int_{\Om}f(x)|u_1+t u_\e|^q~dx\\& +\la \int_{\Om}|u_1|^q~dx + q\la t\int_{\Om}u_1^{q-1}u_\e~dx  + \frac{\e^p}{2\theta}\|t u_\e\|^{2\theta} \\& + \e^p  C_0 R\|t u_\e\|^{2\theta-1}+\e^pC_0 R \|t u_\e\|^2 + o(\e^{\frac{N-2}{2} }) \\
	 \leq& \frac{a}{2}\|t u_\e\|^2 -\frac{1}{2.2^*_{\mu}} \|t u_\e\|_{NL}^{2.2^*_{\mu}}     -\widehat{C}t^{2.2^*_{\mu}-1} \int_{\Om}\int_{ \Om}\frac{(u_\e(x))^{2^*_{\mu}}(u_\e(y))^{2^*_{\mu}-1}u_1(y)}{|x-y|^{\mu}}~dxdy  \\
	&  -\int_{\Om}f(x) \left( \int_{0}^{t u_\e}|u_1+s|^q - |u_1|^q~ ds \right)~dx  \\
	&  + \frac{\e^p}{2\theta}\|t u_\e\|^{2\theta}  + \e^p  C_0 R\|t u_\e\|^{2\theta-1}+\e^pC_0 R \|t u_\e\|^2 + o(\e^{\frac{N-2}{2} }). 
	\end{align*}	
From $f>0$ in $\Sigma$ and $tu_\e =0$ in $\Sigma^c$ and using claim 1, we see that 
	\begin{align*}
	J_\la(u_1+t u_\e) 
	&  \leq  \frac{at^2}{2}\| u_\e\|^2-\frac{t^{2.2^*_{\mu}}}{2.2^*_{\mu}} \| u_\e\|_{NL}^{2.2^*_{\mu}}+ \frac{\e^pt^{2\theta}}{2\theta}\| u_\e\|^{2\theta} \\
	& \quad - t^{2.2^*_{\mu}-1} \widehat{C}r_1\e^{\frac{N-2}{2}}  + \e^p  C_0 R\|t u_\e\|^{2\theta-1}+\e^pC_0 R \|t u_\e\|^2 + o(\e^{\frac{N-2}{2} }) .
	\end{align*}
We define 
\begin{align*}
\mathcal{H}(t) :=&   \frac{at^2}{2}\| u_\e\|^2 + \frac{\e^pt^{2\theta}}{2\theta}\| u_\e\|^{2\theta} -\frac{t^{2.2^*_{\mu}}}{2.2^*_{\mu}} \| u_\e\|_{NL}^{2.2^*_{\mu}}\\
& + \e^p  C_0 R\|t u_\e\|^{2\theta-1}+\e^pC_0 R \|t u_\e\|^2 + - t^{2.2^*_{\mu}-1}\widehat{C}r_1\e^{\frac{N-2}{2}}. 
\end{align*}
	Then $\mathcal{H}(t)\ra \infty$ as $t\ra \infty$ and $\ds \lim_{t\ra 0^+}\mathcal{H}(t)>0$. Hence there exists a $t_\e>0$ such that $\ds \sup_{t>0}\mathcal{H}(t)= \mathcal{H}(t_\e)$ and 
\begin{equation}
	\begin{aligned}\label{kh10}
	 0= \mathcal{H}^\prime(t_\e)=
	 &  t_\e a\| u_\e\|^2 + \e^pt_\e  ^{2\theta-1}\| u_\e\|^{2\theta} -t_\e ^{2.2^*_{\mu}-1} \| u_\e\|_{NL}^{2.2^*_{\mu}}\\
	 & +\e^p  C_0 R t_\e ^{2\theta-2} \| u_\e\|^{2\theta-1}+\e^pC_0 R t_\e  \| u_\e\|^2 - t_\e ^{2.2^*_{\mu}-2}\widehat{C}r_1\e^{\frac{N-2}{2}} . 
	\end{aligned}
	\end{equation}
	From \eqref{kh10}, we have 
	\begin{align*}
	t_\e ^{2.2^*_{\mu}-2} \| u_\e\|_{NL}^{2.2^*_{\mu}} \leq a \| u_\e\|^2 + \e^pt_\e ^{2\theta-2}\| u_\e\|^{2\theta}+\e^p  C_0 R t_\e ^{2\theta-2} \| u_\e\|^{2\theta-1}+\e^pC_0 R t_\e  \| u_\e\|^2,
	\end{align*}
	Now using the fact that $2^*_{\mu}>\theta$, there exists a $T_0>0$ such that $t_\e<T_0$. Again by \eqref{kh10}, we have
	\begin{align*}
	 a\| u_\e\|^2 \leq t_\e ^{2.2^*_{\mu}-2} \| u_\e\|_{NL}^{2.2^*_{\mu}}+ t_\e ^{2.2^*_{\mu}-3}\widehat{C}r_1\e^{\frac{N-2}{2}}.
	\end{align*}
	It implies that there exists a $T_{00}>0$ such that $T_{00}<t_\e$. Now let  
	\begin{align}\label{kh13}
	\mathcal{G}(t)= \frac{at^2}{2}\| u_\e\|^2 -\frac{t^{2.2^*_{\mu}}}{2.2^*_{\mu}} \| u_\e\|_{NL}^{2.2^*_{\mu}}.
	\end{align}
	Then $\mathcal{G}(t)$  attains its maximum at $t_0= \left(\frac{a\| u_\e\|^2}{\| u_\e\|_{NL}^{2.2^*_{\mu}}}\right)^{\frac{1}{2.2^*_{\mu}-2}}$.
	
	 Therefore using Proposition \ref{khprop3},
	\begin{align*}
	\sup_{t>0}\mathcal{H}(t)& \leq \mathcal{G}(t_\e)+ \frac{\e^pT_0^{2\theta}}{2\theta}\| u_\e\|^{2\theta} + \e^p  C_0 T_0^{2\theta-1} R\| u_\e\|^{2\theta-1}+\e^pC_0 R T_0^{2} \| u_\e\|^2  - T_{00}^{2.2^*_{\mu}-1}\widehat{C}r_1\e^{\frac{N-2}{2}}\\
	&\leq \mathcal{G}(t_0)+ \frac{\e^pT_0^{2\theta}}{2\theta}\| u_\e\|^{2\theta} + \e^p  C_0 T_0^{2\theta-1} R\| u_\e\|^{2\theta-1}+\e^pC_0 R T_0^{2} \| u_\e\|^2   - T_{00}^{2.2^*_{\mu}-1}\widehat{C}r_1\e^{\frac{N-2}{2}}\\
	&=  \frac{N-\mu+2}{2(2N-\mu)}\left(aS_{H,L}\right)^{\frac{2N-\mu}{N-\mu+2}}+O(\e^{N-2})+ \frac{\e^pT_0^{2\theta}}{2\theta}\| u_\e\|^{2\theta} + \e^p  C_0 T_0^{2\theta-1} R\| u_\e\|^{2\theta-1}\\
	& \quad +\e^pC_0 R T_0^{2} \| u_\e\|^2   - T_{00}^{2.2^*_{\mu}-1}\widehat{C}r_1\e^{\frac{N-2}{2}}. 
	\end{align*}
	Thus 
	\begin{align*}
	J_\la(u_1+t u_\e) 
	&  \leq  \frac{N-\mu+2}{2(2N-\mu)}\left(aS_{H,L}\right)^{\frac{2N-\mu}{N-\mu+2}}+C_1\e^{N-2}+ \frac{\e^pT_0^{2\theta}}{2\theta}\| u_\e\|^{2\theta} + \e^p  C_0 T_0^{2\theta-1} R\| u_\e\|^{2\theta-1}\\
	& \quad +\e^pC_0 R T_0^{2} \| u_\e\|^2  - T_{00}^{2.2^*_{\mu}-1}\widehat{C}r_1\e^{\frac{N-2}{2}}  + o(\e^{\frac{N-2}{2} }), 
	\end{align*}
	for some $C_1>0$.	Since $p>N-2$, it implies there exists a constant $C_2>0$ such that
	\begin{align*}
	C_1\e^{N-2}+ \frac{\e^pT_0^{2\theta}}{2\theta}\| u_\e\|^{2\theta} + \e^p  C_0 T_0^{2\theta-1} R\| u_\e\|^{2\theta-1}+\e^pC_0 R T_0^{2} \| u_\e\|^2 + o(\e^{\frac{N-2}{2} }) \leq C_2\e^{\beta}, 
	\end{align*}
	where $\beta > \frac{N-2}{2}$. Hence 
	 \begin{align*}
	 J_\la(u_1+t u_\e) 
	 &  \leq  \frac{N-\mu+2}{2(2N-\mu)}\left(aS_{H,L}\right)^{\frac{2N-\mu}{N-\mu+2}}+C_2\e^{\beta} - T_{00}^{2.2^*_{\mu}-1}\widehat{C}r_1\e^{\frac{N-2}{2}} .
	 \end{align*}
	 Let $\e= (\la^{\frac{2}{2-q}})^{\frac{1}{\beta}}$ then 
	 \begin{align*}
	 J_\la(u_1+t u_\e) 
	 &  \leq  \frac{N-\mu+2}{2(2N-\mu)}\left(aS_{H,L}\right)^{\frac{2N-\mu}{N-\mu+2}}+C_2\la^{\frac{2}{2-q}} - T_{00}^{2.2^*_{\mu}-1}\widehat{C}r_1(\la^{\frac{2}{2-q}})^{\frac{N-2}{2\beta}} .
	 \end{align*}
	 Now let $\La^*_1=  \left(\frac{T_{00}^{2.2^*_{\mu}-1}\widehat{C}r_1}{\ds C_2+\widehat{D}}\right)^{\frac{(2-q)\beta}{(2\beta-(N-2))}}$, where $\widehat{D}$ is defined in Proposition \ref{khprop2}. Then for all $0<\la<\La^*_1$,  we have 
	 \begin{align*}
	 C_2\la^{\frac{2}{2-q}} - T_{00}^{2.2^*_{\mu}-1}\widehat{C}r_1(\la^{\frac{2}{2-q}})^{\frac{N-2}{2\beta}} = \la^{\frac{2}{2-q}}\left(C_2 - T_{00}^{2.2^*_{\mu}-1}\widehat{C}r_1\la^{\frac{2}{2-q}\left(\frac{N-2}{2\beta}-1\right)}\right) < - \widehat{D} \la^{\frac{2}{2-q}}.
	 \end{align*}
	 Define $\Upsilon^*= \min\{\La^*,\La^*_1\}$ and $\e^*= (\Upsilon^*)^{\frac{2}{\beta(2-q)}}>0 $ such that 	 
	 for every  $\la \in (0, \Upsilon^{*})$ and $\e \in (0, \e^{*})$, we have 
	 \begin{align*}
	 \sup_{t\geq 0}J_\la(u_1+t u_\e)< c_{\infty}.
	 \end{align*}
	 \QED
\end{proof}	

 \begin{Lemma}\label{khlem8}
If $\mu<4$, $\la \in (0,\Upsilon^*)$ and $\e \in (0,\e^*)$ then the following holds:
	\begin{itemize}
		\item [(i)] $H_0^1(\Om)\setminus N_\la^-= U_1\cup U_2$,
		where 
		\begin{align*}
		& U_1:= \left \lbrace u \in H_0^1(\Om)\setminus \{0\}\; \middle|\;   \|u\|<t^-\left(\frac{u}{\|u\|}\right)  \right \rbrace \cup \{0\},\\
		& U_2:= \left \lbrace u \in H_0^1(\Om)\setminus \{0\}\; \middle|\;  \|u\|>t^-\left(\frac{u}{\|u\|}\right)  \right \rbrace. 
		\end{align*}
		\item [(ii)] $N_\la^+ \subset U_1$.
		\item [(iii)] There exists $t_0>1$  such that $u_1+t_0u_
		\e \in U_2$.
			\item [(iv)] There exists $s_0\in (0,1)$  such that $u_1+ s_0  u_\e\in N_\la^-$.
		\item [(v)] $\theta_\la^- < c_\infty$.
	\end{itemize}
\end{Lemma}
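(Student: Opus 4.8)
The plan is to read off (i)--(v) from the fibre-map trichotomy of Lemma~\ref{khlem5}, the continuity of $t^-$ stated there, and the estimate $\sup_{t\ge 0}J_\la(u_1+tu_\e)<c_\infty$ of Lemma~\ref{khlem9}. For $u\in H_0^1(\Om)\setminus\{0\}$ set $g(u):=\|u\|-t^-(u/\|u\|)$, which makes sense because, by Lemma~\ref{khlem5}(i)--(ii), $t^-$ is defined on all of $H_0^1(\Om)\setminus\{0\}$. By Lemma~\ref{khlem5}(iv) one has $u\in N_\la^-\iff g(u)=0$, and straight from the definitions $U_1\setminus\{0\}=\{u\neq 0:g(u)<0\}$ and $U_2=\{u:g(u)>0\}$. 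Part (i) is then a tautology: for each $u\neq 0$ exactly one of $g(u)<0$, $g(u)=0$, $g(u)>0$ holds, decomposing $H_0^1(\Om)\setminus\{0\}$ as $(U_1\setminus\{0\})\cup N_\la^-\cup U_2$; adjoining $0\in U_1$ (and $0\notin N_\la^-$) gives $H_0^1(\Om)\setminus N_\la^-=U_1\cup U_2$.

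For (ii) I would take $u\in N_\la^+$, write $w=u/\|u\|$, and split on the sign of $\int_\Om f|w|^q\,dx$. If it is $\le 0$, then by Lemma~\ref{khlem5}(ii) the only positive multiple of $w$ in $N_\la$ lies in $N_\la^-$, contradicting $u\in N_\la^+$; so $\int_\Om f|w|^q\,dx>0$ and Lemma~\ref{khlem5}(i) applies, giving $t^+(w)<t_{max}<t^-(w)$ with $t^+(w)w\in N_\la^+$. Since $u=\|u\|\,w\in N_\la^+$, uniqueness of $t^+(w)$ forces $\|u\|=t^+(w)<t^-(w)$, i.e.\ $g(u)<0$ and $u\in U_1$; as $u$ was arbitrary, $N_\la^+\subset U_1$. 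For (iii), writing $u_1+tu_\e=t(u_\e+t^{-1}u_1)$ shows $(u_1+tu_\e)/\|u_1+tu_\e\|\to u_\e/\|u_\e\|$ in $H_0^1(\Om)$ and $\|u_1+tu_\e\|\to\infty$ as $t\to\infty$. Because $u_\e\ge 0$ is supported in $B_{2\de}(0)\subset\Sigma$, $\int_\Om f|u_\e|^q\,dx>0$, so $t^-(u_\e/\|u_\e\|)$ is finite; by continuity of $t^-$ (Lemma~\ref{khlem5}(iii)) the numbers $t^-\!\big((u_1+tu_\e)/\|u_1+tu_\e\|\big)$ stay bounded, whence $g(u_1+tu_\e)>0$ for all $t$ large. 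Fixing such a $t=t_0>1$ gives $u_1+t_0u_\e\in U_2$.

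For (iv) I would run a connectedness argument on the segment $s\mapsto u_1+su_\e$, $s\in[0,t_0]$: this path is continuous and never zero (since $u_1>0$ and $u_\e\ge 0$), so $s\mapsto g(u_1+su_\e)$ is continuous by Lemma~\ref{khlem5}(iii). Since $u_1\in N_\la^+$ is the solution from Theorem~\ref{khthm1}, (ii) gives $g(u_1)<0$, while (iii) gives $g(u_1+t_0u_\e)>0$; the intermediate value theorem then produces $s_0$, $0<s_0<t_0$, with $g(u_1+s_0u_\e)=0$, i.e.\ $u_1+s_0u_\e\in N_\la^-$ (the localisation $s_0\in(0,1)$ of the statement amounts to reparametrising this segment over $[0,1]$). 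Part (v) is then immediate, and is really the payoff of the section: as $u_1+s_0u_\e\in N_\la^-$,
\[
\theta_\la^-\ \le\ J_\la(u_1+s_0u_\e)\ \le\ \sup_{s\ge 0}J_\la(u_1+su_\e)\ <\ c_\infty,
\]
the last inequality being Lemma~\ref{khlem9}, which is available because $\mu<4$ together with $\mu<N$ yields $\mu<\min\{4,N\}$ and because $\la\in(0,\Upsilon^*)$, $\e\in(0,\e^*)$.

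The only substantive input is the continuity (and scaling) of $t^-$ on $H_0^1(\Om)\setminus\{0\}$ --- notably across the set $\{\int_\Om f|u|^q\,dx=0\}$ where the two cases of Lemma~\ref{khlem5} meet --- which is borrowed from \cite{wu2008}; granting that, everything here is soft. The one step that needs genuine care is (iv): one must be certain the path begins \emph{strictly} inside $U_1$ and ends \emph{strictly} inside $U_2$, so that a crossing of $N_\la^-$ is actually forced --- which is exactly what (ii) (via $u_1\in N_\la^+$) and (iii) (via the concentration of $u_\e$ at a point where $f>0$) are there to supply --- after which (v), the strict inequality $\theta_\la^-<c_\infty$ needed for compactness of the minimizing sequence over $N_\la^-$, drops out of Lemma~\ref{khlem9} with no extra work.
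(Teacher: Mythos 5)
Your proposal is correct and follows essentially the same route as the paper's proof: the same tautological decomposition via Lemma \ref{khlem5}(iv) for (i), the same identification $t^+(u)=1<t^-(u)$ for (ii), the same escape of the ray $u_1+tu_\e$ into $U_2$ via boundedness of $t^-$ along that ray for (iii), the same continuity/connectedness crossing of $N_\la^-$ for (iv), and the same appeal to Lemma \ref{khlem9} for (v). The only (harmless) divergence is in (iii), where you bound $t^-\bigl((u_1+tu_\e)/\|u_1+tu_\e\|\bigr)$ by continuity of $t^-$ together with convergence of the normalized directions to $u_\e/\|u_\e\|$, whereas the paper argues by contradiction, showing that an unbounded sequence of such values would force $J_\la\to-\infty$ on $N_\la^-$, contradicting Lemma \ref{khlem2}.
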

\begin{proof}
	\begin{itemize}
		\item [(i)] It holds by Lemma \ref{khlem5} (iv).
		\item[(ii)] Let $u \in N_\la^+$ then $t^+(u)=1$. So, $1<t^+(u)<t_{max}<t^-(u)= \frac{1}{\|u\|}t^-\left(\frac{u}{\|u\|}\right)$ that is,  $N_\la^+ \subset U_1$.
		\item [(iii)] First, we will show that there exists a constant $c>0$ such that $0<t^-\left(\frac{u_1+tu_\e}{\|u_1+tu_\e\|}\right)< c $ for all $t>0$. On the contrary let there exist a sequence $\{t_n\}$ such that $t_n\ra \infty$ and $t^-\left(\frac{u_1+t_nu_\e}{\|u_1+t_nu_\e\|}\right)\ra \infty$ as $n\ra \infty$. Let $u_n:= \frac{u_1+t_nu_\e}{\|u_1+t_nu_\e\|}$, then by the fibering analysis,  $t^-(u_n)u_n \in N_\la^-$ and  by dominated convergence theorem,
		\begin{align*}
		\|u_n\|_{NL}^{2.2^*_{\mu}}= \frac{\|u_1+t_nu_\e\|_{NL}^{2.2^*_{\mu}}}{\|u_1+t_nu_\e\|^{2.2^*_{\mu}}}= \frac{\|\frac{u_1}{t_n} +u_\e\|_{NL}^{2.2^*_{\mu}}}{\|\frac{u_1}{t_n} +u_\e\|^{2.2^*_{\mu}}}\ra \frac{\|u_\e\|_{NL}^{2.2^*_{\mu}}}{\|u_\e\|^{2.2^*_{\mu}}} \text{ as } n\ra \infty.
		\end{align*}
		Hence, $J_\la(t^-(u_n)u_n)\ra -\infty$ as $n\ra \infty$, contradicts the fact that $J_\la$ is bounded below on $N_\la$. Thus,  there exists $c>0$ such that $0<t^-\left(\frac{u_1+tu_\e}{\|u_1+tu_\e\|}\right)< c $ for all $t>0$. Let $\ds t_0= \frac{|c^2-\|u_1\|^2|^{\frac{1}{2}}}{\|u_\e\|}+1$ then
		\begin{align*}
		\|u_1+t_0u_\e\|^2 & =\|u_1\|^2+t_0^2\|u_\e\|^2+2t_0\ld u_1, \; u_\e \rd \\
		& \geq \|u_1\|^2+ |c^2-\|u_1\|^2| \geq c^2\geq \left(t^-\left(\frac{u_1+tu_\e}{\|u_1+tu_\e\|}\right)\right)^2.
		\end{align*}
		It implies that $u_1+t_0u_\e\in U_2$.
		\item [(iv)] For every  $\la \in (0,\Upsilon^*)$ and $\e \in (0,\e^*)$, define a path $\xi_{\e}(s)= u_1+ s t_0u_\e$ for $s\in [0,1]$. Then 
	$	\xi_{\e}(0)= u_1 \quad \text{and} \quad \xi_{\e}(1)= u_1+ t_0u_\e \in U_2.$
		Since $\frac{1}{\|u\|}t^-\left(\frac{u}{\|u\|}\right)$ is a continuous function and $\xi_{\e}([0,1])$ is connected. So, there exists $s_0 \in [0,1]$ such that $\xi_{\e}(s_0)=u_1+ s_0t_0u_\e \in N_\la^- $.
		\item [(v)] Using part (d) and Lemma  \ref{khlem9}.\QED
	\end{itemize}
\end{proof}

\begin{Lemma}\label{khlem10}
		Let $\mu\geq \min\{4,N\}$ and $\frac{N}{N-2}\leq q <2$ then there exist  $\Upsilon^{**}>0$ and $\e^{**}>0$ such that for every $\la \in (0, \Upsilon^{**})$ and $\e \in (0, \e^{**})$, we have 
	\begin{align*}
	\sup_{r\geq 0}J_\la(r u_\e)< c_{\infty}.
	\end{align*}
	Moreover, we have  $\theta_\la^-<c_\infty$.
\end{Lemma}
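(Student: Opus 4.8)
The plan is to follow the scheme of Lemma~\ref{khlem9}, but to exploit the hypothesis $q\ge\frac N{N-2}$: it makes the subcritical term $\int_\Om f(x)|u_\e|^q\,dx$ by itself large enough to pull the energy level below $c_\infty$, so the translated family $u_1+tu_\e$ is no longer needed and one works directly with the dilations $ru_\e$. First I would set
\[
\mathcal H(r):=J_\la(ru_\e)=\frac a2\,r^2\|u_\e\|^2+\frac{\e^p}{2\theta}\,r^{2\theta}\|u_\e\|^{2\theta}-\frac{\la r^q}{q}\int_\Om f|u_\e|^q\,dx-\frac{r^{2.2^*_{\mu}}}{2.2^*_{\mu}}\|u_\e\|_{NL}^{2.2^*_{\mu}},
\]
and record that, since $\operatorname{supp}u_\e\subset B_{2\de}(0)$ and $f\ge m_f>0$ there, $\int_\Om f|u_\e|^q\,dx\ge m_f\int_{B_\de(0)}U_\e^q\,dx>0$.

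The decisive estimate, obtained by the substitution $x=\e y$, is $\int_{B_\de(0)}U_\e^q\,dx=C\,\e^{\gamma_1}\int_{B_{\de/\e}(0)}(1+|y|^2)^{-q(N-2)/2}\,dy$, where $\gamma_1:=N-\tfrac{q(N-2)}{2}$. Since $q(N-2)\ge N$, the last integral converges when $q>\frac N{N-2}$ and diverges like $|\log\e|$ when $q=\frac N{N-2}$, so that $\int_\Om f|u_\e|^q\,dx\gtrsim\e^{\gamma_1}$, respectively $\gtrsim\e^{N/2}|\log\e|$; and $2<\gamma_1<N-2$ because $q<2$ and (as $\mu\ge4$ forces $N\ge5$) $q\ge\frac N{N-2}>\frac4{N-2}$. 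This is precisely where $q\ge\frac N{N-2}$ enters: for smaller $q$ one would have $\gamma_1\ge N-2$ and the $f$-gain would be too weak to beat the $O(\e^{N-2})$ error produced below.

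Next I would locate the maximiser. As $\mathcal H(r)\to-\infty$ when $r\to\infty$ and $\mathcal H(r)<0$ for small $r$ (since $q<2$), $\sup_{r\ge0}\mathcal H=\mathcal H(r_\e)$ with $\mathcal H'(r_\e)=0$. From $\mathcal H'(r_\e)=0$, the inequality $2\theta<2.2^*_{\mu}$ and the two-sided bounds on $\|u_\e\|^2$, $\|u_\e\|_{NL}^{2.2^*_{\mu}}$ of Proposition~\ref{khprop3}, one gets $r_\e\le T_0$ for a constant $T_0$ independent of $\la,\e$; and comparing $\mathcal H(r_\e)$ with the value of $g(r):=\frac a2 r^2\|u_\e\|^2-\frac{r^{2.2^*_{\mu}}}{2.2^*_{\mu}}\|u_\e\|_{NL}^{2.2^*_{\mu}}$ at its maximiser — which by the definition of $S_{H,L}$ is $\ge\frac{N-\mu+2}{2(2N-\mu)}(aS_{H,L})^{\frac{2N-\mu}{N-\mu+2}}$, the $f$-term costing only $o(1)$ — one sees that $\sup_{r\ge0}\mathcal H$ stays bounded below by a fixed positive constant for $\e$ small, hence $r_\e\ge T_{00}>0$ uniformly. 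Using $g(r_\e)\le\sup g=\frac{N-\mu+2}{2(2N-\mu)}(a\|u_\e\|^2)^{\frac{2N-\mu}{N-\mu+2}}\bigl(\|u_\e\|_{NL}^{2.2^*_{\mu}}\bigr)^{-\frac{N-2}{N-\mu+2}}$, Proposition~\ref{khprop3}, $T_{00}\le r_\e\le T_0$ and the $f$-estimate, one arrives at
\[
\sup_{r\ge0}J_\la(ru_\e)\le\frac{N-\mu+2}{2(2N-\mu)}(aS_{H,L})^{\frac{2N-\mu}{N-\mu+2}}+C_1\e^{N-2}+C_2\e^p-C_3\la\,\e^{\gamma_1}
\]
(with $\e^{\gamma_1}$ replaced by $\e^{N/2}|\log\e|$ in the borderline case).

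Finally I would couple the parameters, setting $\e=\la^{\sigma}$. Recalling $c_\infty=\frac{N-\mu+2}{2(2N-\mu)}(aS_{H,L})^{\frac{2N-\mu}{N-\mu+2}}-\widehat{D}\la^{2/(2-q)}$ and $p>N-2$, the inequality $\sup_{r\ge0}J_\la(ru_\e)<c_\infty$ will follow once $1+\sigma\gamma_1<\min\{\sigma(N-2),\,\sigma p,\,\tfrac2{2-q}\}$, which (using $p>N-2$) reduces to $\tfrac1{N-2-\gamma_1}<\sigma<\tfrac{q}{(2-q)\gamma_1}$; a direct computation shows this interval is nonempty exactly when $2\gamma_1<q(N-2)$, i.e. $q>\frac N{N-2}$, and at the endpoint $q=\frac N{N-2}$ (where $\gamma_1=N/2$) the choice $\sigma=\tfrac2{N-4}$ still works because the extra factor $|\log\e|=\sigma|\log\la|$ absorbs the resulting equalities. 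One then takes $\Upsilon^{**}$ small and $\e^{**}=(\Upsilon^{**})^{\sigma}$. For the last assertion, since $\int_\Om f|u_\e|^q\,dx>0$, Lemma~\ref{khlem5}(i) yields $t^-(u_\e)>t_{\max}$ with $t^-(u_\e)u_\e\in N_\la^-$ and $J_\la(t^-(u_\e)u_\e)=\sup_{t\ge t^+(u_\e)}J_\la(tu_\e)\le\sup_{r\ge0}J_\la(ru_\e)<c_\infty$, whence $\theta_\la^-\le J_\la(t^-(u_\e)u_\e)<c_\infty$. The main obstacle is precisely this last balancing step: after inserting the sharp asymptotics of $\int_{B_\de}U_\e^q$ one must arrange that every competing power of $\la$ becomes subordinate to a single $\la^{1+\sigma\gamma_1}$ that still dominates $\widehat{D}\la^{2/(2-q)}$, which is possible exactly on the stated range $q\ge\frac N{N-2}$ (with the logarithmic refinement at the borderline).
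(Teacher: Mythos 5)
Your argument is correct and follows essentially the same route as the paper: maximize $J_\la(ru_\e)$ directly, split off the pure critical part $\mathcal G$, use Proposition \ref{khprop3} to bound its maximum by $\frac{N-\mu+2}{2(2N-\mu)}(aS_{H,L})^{\frac{2N-\mu}{N-\mu+2}}+O(\e^{N-2})$, gain the term $-C\la\e^{N-\frac{N-2}{2}q}$ (with the logarithm at $q=\frac{N}{N-2}$) from $\int_{B_\de}U_\e^q$, and then couple $\e=\la^{\sigma}$. The only cosmetic differences are that you keep a whole admissible interval of exponents $\sigma$ where the paper fixes $\sigma=\frac{2}{(2-q)(N-2)}$ (which lies in your interval), and you control the maximizer from below by $T_{00}$ where the paper instead restricts to $r\ge r_0$ and uses $r^q\ge r_0^q$ in the $f$-term.
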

\begin{proof}  Let $0< \la< \La^*$ then $c_\infty >0$ and  \begin{align*}
J_\la(r u_\e)\leq  
\ds \frac{ar^2}{2}\| u_\e\|^2+ \frac{\e^pr^{2\theta}}{2\theta}\| u_\e\|^{2\theta} \leq C (r^2+ r^{2\theta}).
	\end{align*} 
	Therefore there exists a $r_0 \in (0,1)$ such that 
	$
	\sup_{0\leq r\leq r_0}J_\la(r u_\e) < c_\infty, 
	$
	for all $0<\la< \La^*$. This implies we only have to show that $	\ds \sup_{ r\geq r_0}J_\la(r u_\e) < c_\infty$. Now consider 
	\begin{align*}
\sup_{ r\geq r_0}	J_\la(r u_\e) & = \sup_{ r\geq r_0}\left( \frac{ar^2}{2}\| u_\e\|^2+ \frac{\e^pr^{2\theta}}{2\theta}\| u_\e\|^{2\theta} - \frac{\la r^q}{q} \int_{ \Om}f(x) |u_\e|^q~dx - \frac{r^{2.2^*_{\mu}}}{2.2^*_{\mu}} \|u_\e\|_{NL}^{2.2^*_{\mu}}\right)\nonumber\\
& = \sup_{ r\geq r_0}\left( \frac{ar^2}{2}\| u_\e\|^2+ \frac{\e^pr^{2\theta}}{2\theta}\| u_\e\|^{2\theta} - \frac{\la r^q}{q} \int_{B_{2\de}(0)}f(x) |u_\e|^q~dx - \frac{r^{2.2^*_{\mu}}}{2.2^*_{\mu}} \|u_\e\|_{NL}^{2.2^*_{\mu}}\right)\nonumber\\
&  \leq  \sup_{ r\geq r_0}\left( \frac{ar^2}{2}\| u_\e\|^2+ \frac{\e^pr^{2\theta}}{2\theta}\| u_\e\|^{2\theta} - \frac{m_f\la r^q}{q} \int_{B_{2\de}(0)} |u_\e|^q~dx - \frac{r^{2.2^*_{\mu}}}{2.2^*_{\mu}} \|u_\e\|_{NL}^{2.2^*_{\mu}}\right)\nonumber\\
& \leq  \sup_{ r\geq r_0}\left( \frac{ar^2}{2}\| u_\e\|^2+ \frac{\e^pr^{2\theta}}{2\theta}\| u_\e\|^{2\theta} - \frac{m_f\la r^q}{q} \int_{B_{\de}(0)} |U_\e|^q~dx - \frac{r^{2.2^*_{\mu}}}{2.2^*_{\mu}} \|u_\e\|_{NL}^{2.2^*_{\mu}}\right)\nonumber\\
& \leq  \sup_{ r\geq 0} \mathcal{V}(r) - \frac{m_f\la r_0^q}{q} \int_{B_{\de}(0)} |U_\e|^q~dx,
	\end{align*}
	where $\mathcal{V}(r)= \ds \frac{ar^2}{2}\| u_\e\|^2+ \frac{\e^pr^{2\theta}}{2\theta}\| u_\e\|^{2\theta} - \frac{r^{2.2^*_{\mu}}}{2.2^*_{\mu}} \|u_\e\|_{NL}^{2.2^*_{\mu}}$. Then $\mathcal{V}(0)=0$, $\ds \lim_{r\ra 0^+} \mathcal{V}(r)>0$ and 
 $\mathcal{V}(r)\ra -\infty$ as $r\ra \infty$. Therefore, there exists a $r_\e>0$ such that $\ds \sup_{r>0}\mathcal{V}(r)= \mathcal{V}(r_\e)$ and 
\begin{equation}\label{kh12}
\begin{aligned}
0= \mathcal{V}^\prime(r_\e)=
&  r_\e a\| u_\e\|^2 + \e^pr_\e^{2\theta-1}\| u_\e\|^{2\theta}  -r_\e^{2.2^*_{\mu}-1}\| u_\e\|_{NL}^{2.2^*_{\mu}} . 
\end{aligned}
\end{equation}
From \eqref{kh12}, we have 
\begin{align*}
r_\e^{2.2^*_{\mu}-2}= \frac{1}{\| u_\e\|_{NL}^{2.2^*_{\mu}}}\left(a \| u_\e\|^2 + \e^pr_\e^{2\theta-2}\| u_\e\|^{2\theta} \right)<C(1+r_\e^{2\theta-2}),
\end{align*}
 for some $C>0$. Using the fact that $2^*_{\mu}>\theta$, there exists a $r_1>0$ such that $r_\e<r_1$. Combining all these, we get 
 \begin{align*}
 \sup_{ r\geq r_0}	J_\la(r u_\e) &  \leq  \sup_{ r\geq 0} \mathcal{V}(r) - \frac{m_f\la r_0^q}{q} \int_{B_{\de}(0)} |U_\e|^q~dx\nonumber\\
 & =\frac{ar_\e^2}{2}\| u_\e\|^2+ \frac{\e^pr_\e^{2\theta}}{2\theta}\| u_\e\|^{2\theta} - \frac{r_\e^{2.2^*_{\mu}}}{2.2^*_{\mu}} \|u_\e\|_{NL}^{2.2^*_{\mu}} - \frac{m_f\la r_0^q}{q} \int_{B_{\de}(0)} |U_\e|^q~dx \nonumber \\
 &   \leq  \sup_{ r\geq 0} \left(\frac{ar^2}{2}\| u_\e\|^2 - \frac{r^{2.2^*_{\mu}}}{2.2^*_{\mu}} \|u_\e\|_{NL}^{2.2^*_{\mu}} \right) + \frac{\e^pr_1^{2\theta}}{2\theta}\| u_\e\|^{2\theta} - \frac{m_f\la r_0^q}{q} \int_{B_{\de}(0)} |U_\e|^q~dx \nonumber\\
 &=\sup_{r\ge 0} \mathcal{G}(r) +\frac{\e^pr_1^{2\theta}}{2\theta}\| u_\e\|^{2\theta} - \frac{m_f\la r_0^q}{q} \int_{B_{\de}(0)} |U_\e|^q~dx ,
 \end{align*}
 where $\mathcal{G}(r)$ is defined as in \eqref{kh13}. Now since $\mathcal{G}(r)$ has maximum at $ r^*= \left(\frac{a\| u_\e\|^2}{\| u_\e\|_{NL}^{2.2^*_{\mu}}}\right)^{\frac{1}{2.2^*_{\mu}-2}}, $ 
 \begin{align}\label{kh14}
 \sup_{ r\geq r_0}	J_\la(r u_\e)& \le \frac{N-\mu+2}{2(2N-\mu)}\left(aS_{H,L}\right)^{\frac{2N-\mu}{N-\mu+2}}+O(\e^{N-2}) + \frac{\e^pr_1^{2\theta}}{2\theta}\| u_\e\|^{2\theta} - \frac{m_f\la r_0^q}{q} \int_{B_{\de}(0)} |U_\e|^q~dx \nonumber \\
 & \leq \frac{N-\mu+2}{2(2N-\mu)}\left(aS_{H,L}\right)^{\frac{2N-\mu}{N-\mu+2}}+C_1\e^{N-2} - \frac{m_f\la r_0^q}{q} \int_{B_{\de}(0)} |U_\e|^q~dx , 
\end{align}
 where the last inequality comes from the fact that $p>N-2$. Now we will find the estimates on $\int_{B_{\de}(0)} |U_\e|^q~dx$. For $0<\e<\frac{\de}{2}$, we have
 \begin{equation}
 \begin{aligned}\label{kh15}
 \int_{B_{\de}(0)} |U_\e|^q~dx & \geq C_2 w_{N-1} \e^{N-\frac{N-2}{2}q} \int_{1}^{\frac{\de}{\e}} r^{N-\frac{N-2}{2}(q-1)}~dr\\
 & \backsimeq C_3 \left\{
 \begin{array}{ll}
 \e^{N-\frac{N-2}{2}q}, &  \text{ if }  q> \frac{N}{N-2}, \\
\e^{N-\frac{N-2}{2}q} |\text{log}\; \e|, & \text{ if } q= \frac{N}{N-2}.\\
 \end{array} 
 \right.
  \end{aligned}
  \end{equation}
Using  \eqref{kh14} and \eqref{kh15} with $\e = (\la^{\frac{2}{2-q}})^{\frac{1}{N-2}} $, it follows that 
\begin{equation*}
\begin{aligned}
\sup_{ r\geq r_0}	J_\la(r u_\e) &   \leq \frac{N-\mu+2}{2(2N-\mu)}\left(aS_{H,L}\right)^{\frac{2N-\mu}{N-\mu+2}}+C_1\la^{\frac{2}{2-q}} \\
& - C_3 \la  \left\{
\begin{array}{ll}
\la^{\frac{2}{(2-q)(N-2)}\left(N-\frac{N-2}{2}q\right)}, &  \text{ if }  q> \frac{N}{N-2}, \\
\la^{\frac{2}{(2-q)(N-2)}\left(N-\frac{N-2}{2}q\right)}  |\text{\text{log}}\; (\la^{\frac{2}{2-q}})^{\frac{1}{N-2}}|, & \text{ if } q= \frac{N}{N-2}.\\
\end{array} 
\right.
\end{aligned}
\end{equation*}
\textbf{Case 1: } When $q> \frac{N}{N-2}$.\\
 Trivially, $q> \frac{N}{N-2}$ if and only if $1+  \frac{2}{(2-q)(N-2)}\left(N-\frac{N-2}{2}q\right) < \frac{2}{2-q}$. Thus we can choose $\gamma_1>0$ such that for every $0<\la<\gamma_1$ we have, 
 \begin{align*}
 C_1\la^{\frac{2}{2-q}}- C_3\la^{1+\frac{2}{(2-q)(N-2)}\left(N-\frac{N-2}{2}q\right)} < - \widehat{D} \la^{\frac{2}{2-q}},
 \end{align*}
 where $\widehat{D}$ is defined in Proposition \ref{khprop2}. \\
 \textbf{Case 2: } When $q= \frac{N}{N-2}$.\\
 As $\la \ra 0$ then $|\text{\text{log}}\; (\la^{\frac{2}{2-q}})^{\frac{1}{N-2}}|\ra \infty$ thus we can choose  $\gamma_2>0$ such that for every $0<\la<\gamma_2$, we have
 \begin{align*}
 C_1\la^{\frac{2}{2-q}}- C_3\la^{1+\frac{2}{(2-q)(N-2)}\left(N-\frac{N-2}{2}q\right)}|\text{\text{log}}\; (\la^{\frac{2}{2-q}})^{\frac{1}{N-2}}| < - \widehat{D} \la^{\frac{2}{2-q}},
 \end{align*}
By defining $\Upsilon^{**}= \min\{\La^*,\;\gamma_1,\;\gamma_2,\; (\de/2)^{N-2} \}>0
$ and  $\e^{**} = (\Upsilon^{**})^{\frac{2}{(2-q)(N-2)}}>0$,  we get 	\begin{align*}
\sup_{ r\geq 0}J_\la(r u_\e) < c_\infty, 
\end{align*} 
for all $\la \in (0, \Upsilon^{**})$ and $\e \in (0, \e^{**})$. Since there exists  $r_2>0$ such that $r_2 u_\e \in N_\la^-$. Thus 
\begin{align*}
\theta_\la^- \leq J_\la(r_2u_\e)\leq \sup_{ r\geq 0}J_\la(r u_\e) < c_\infty.
\end{align*}
\QED
 \end{proof}
 \textbf{Proof of Theorem \ref{khthm2} :}  From Remark \ref{khrem1} and Proposition \ref{khprop1}, there exists a minimizing sequence $\{u_n\} \subset N_{\la}^- $ such that
 \begin{align*}
 J_\la(u_{n}) = \theta_{\lambda}^-+o_n(1) \text{ and } J_\la^{\prime}(u_{n}) = o_n(1).
 \end{align*}
 If $\mu < \min \{4,N\}$ then from  Lemma \ref{khlem8},  for each  $\la \in (0,\Upsilon^*)$ and $\e \in (0,\e^*)$, we have  $\theta_\la^- < c_\infty$. If  $\mu \geq  \min \{4,N\}$ and $\frac{N}{N-2}\leq q<2$ then from  Lemma \ref{khlem10}, for every $\la \in (0, \Upsilon^{**})$ and $\e \in (0, \e^{**})$ we have  $\theta_\la^- < c_\infty$. This on using Proposition \ref{khprop2} gives that there exists a convergent subsequence  of $\{u_n\}$ (still denoted by $\{u_n\}$) and $u_2 \in H^1_0(\Om)$ such that $u_n\rightarrow u_2$ strongly in $H_0^1(\Om)$.  Since $N_\la^-$ is a closed set, $u_2 \in N_\la^-$ and $J_\la(u_2)= \theta_\la^-$ and also by Lemma \ref{khlem6},  $u_2$ is a solution $(P_\la)$. Since $J_\la(u_2)= J_\la(|u_2|)$, therefore $u_2$ is non-negative solution. By \cite[Lemma 4.4]{yangjmaa}  and strong maximum principle, we have $u_2$ is a positive solution of $(P_\la)$. Hence we get two positive solutions $u_1 \in N_\la^+$ and $u_2 \in N_\la^-$.\QED
 
  \section{ The case $q=2$}
 In this section,  we consider the problem $(P_\la)$ when $q=2$. Precisely we consider the problem:
 \begin{equation*}
 (P_\la)
 \left\{\begin{array}{rlll}
 -\left(  a+\e^p  \|\na u\|_{L^2}^{2\theta-2}\right)\De u
 &=\la f(x)u+ \left(\ds \int_{\Om}\frac{|u(y)|^{2^*_{\mu}}}{|x-y|^{\mu}}dy\right)|u|^{2^*_{\mu}-2}u,  \, \text{in}\,
 \Om,\\
 u&=0 \, \text{ on } \pa \Om. \end{array}\right.
 \end{equation*}
 The functional $J_\la$  is equal to 
 \begin{equation*}
 J_\la(u) = \frac{a}{2}\|u\|^2+\frac{\e^p}{2\theta} \|u\|^{2\theta}  - \frac{1}{2} \int_{\Om}f(x)|u|^2 ~dx- \frac{1}{2.2^*_{\mu}} \int_{\Om}\int_{\Om} \frac{|u(x)|^{2^{*}_{\mu}}|u(y)|^{2^{*}_{\mu}}}{|x-y|^{\mu}}~dxdy.
 \end{equation*}
 \begin{Lemma}\label{khlem11}
 	If $N\geq 3$
 	and $\la\in (0,\;aS\|f\|_{L^r}^{-1})$ 	then $J_\la$ satisfies the following conditions:
 	\begin{enumerate}
 		\item [(i)] There exists $\al,\rho>$ such that $J_\la(u)\geq \al$ for $\|u\|= \rho$.
 		\item [(ii)] There exists  $e \in H_0^1(\Om)$ with $\|e\|>\rho$ such that $J_\la(e)<0$.
 	\end{enumerate}
 \end{Lemma}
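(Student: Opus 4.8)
The plan is to check the two geometric conditions of the Mountain Pass Lemma directly for $J_\la$. The subcritical term $\la\int_\Om f(x)|u|^2\,dx$ will be controlled by H\"older's inequality together with the Sobolev embedding $H_0^1(\Om)\hookrightarrow L^{2^*}(\Om)$, and the Choquard term by the Hardy--Littlewood--Sobolev inequality \eqref{co9} and the definition \eqref{nh5} of $S_{H,L}$; the Kirchhoff contribution $\frac{\e^p}{2\theta}\|u\|^{2\theta}$ is nonnegative, so it can simply be discarded in the lower estimate and plays no role.

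For \textbf{(i)}, I would first note that, with $r=\frac{2^*}{2^*-2}$,
\[
\la\int_\Om f(x)|u|^2\,dx\le \la\|f\|_{L^r}\|u\|_{L^{2^*}}^2\le \la\|f\|_{L^r}S^{-1}\|u\|^2,\qquad \|u\|_{NL}^{2.2^*_{\mu}}\le S_{H,L}^{-2^*_{\mu}}\|u\|^{2.2^*_{\mu}}.
\]
Discarding $\frac{\e^p}{2\theta}\|u\|^{2\theta}\ge 0$ this gives
\[
J_\la(u)\ge \frac12\bigl(a-\la\|f\|_{L^r}S^{-1}\bigr)\|u\|^2-\frac{1}{2.2^*_{\mu}}S_{H,L}^{-2^*_{\mu}}\|u\|^{2.2^*_{\mu}}.
\]
Because $\la<aS\|f\|_{L^r}^{-1}$ the coefficient $a-\la\|f\|_{L^r}S^{-1}$ is strictly positive, and since $2.2^*_{\mu}>2$ the scalar function $\rho\mapsto \frac12(a-\la\|f\|_{L^r}S^{-1})\rho^2-\frac{1}{2.2^*_{\mu}}S_{H,L}^{-2^*_{\mu}}\rho^{2.2^*_{\mu}}$ is strictly positive on an interval $(0,\rho_0)$. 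Choosing $\rho\in(0,\rho_0)$ and letting $\al$ be the value of this function at $\rho$ yields $J_\la(u)\ge\al>0$ for all $u$ with $\|u\|=\rho$.

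For \textbf{(ii)}, I would fix any $w\in H_0^1(\Om)\setminus\{0\}$, so that $\|w\|_{NL}>0$, and examine
\[
J_\la(tw)=\frac a2 t^2\|w\|^2+\frac{\e^p}{2\theta}t^{2\theta}\|w\|^{2\theta}-\frac\la2 t^2\int_\Om f(x)|w|^2\,dx-\frac{t^{2.2^*_{\mu}}}{2.2^*_{\mu}}\|w\|_{NL}^{2.2^*_{\mu}}.
\]
Since $\theta<2^*_{\mu}$ implies $2.2^*_{\mu}>\max\{2,2\theta\}$, the term $-\frac{t^{2.2^*_{\mu}}}{2.2^*_{\mu}}\|w\|_{NL}^{2.2^*_{\mu}}$ dominates as $t\to+\infty$, hence $J_\la(tw)\to-\infty$. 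Picking $t_0>0$ large enough that $\|t_0w\|>\rho$ and $J_\la(t_0w)<0$, and setting $e=t_0w$, finishes the argument.

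There is no genuine obstacle in this lemma; the only point demanding care is that in (i) the linear-growth contribution $\frac\la2\int_\Om f(x)|u|^2\,dx$ must not absorb the quadratic term $\frac a2\|u\|^2$, which is exactly what the restriction $\la<aS\|f\|_{L^r}^{-1}$ provides, in the spirit of the Brezis--Nirenberg argument.
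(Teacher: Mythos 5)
Your proposal is correct and follows essentially the same route as the paper: bound the indefinite term by $\la\|f\|_{L^r}S^{-1}\|u\|^2$ via H\"older and the Sobolev embedding, bound the Choquard term via $S_{H,L}$, discard the nonnegative Kirchhoff term, and use $2.2^*_{\mu}>2$ for the mountain-pass geometry in (i) and the dominance of the negative critical term as $t\to\infty$ for (ii). No substantive difference.
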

 
 \begin{proof}
 	(i) Using $\la\in (0,\;aS\|f\|_{L^r}^{-1})$, definition of $S$ and $S_{H,L}$, we have 
 	\begin{align*}
 	J_\la(u)\geq \frac{1}{2}(a-\la S^{-1}\|f\|_{L^r})\|u\|^2-\frac{S_{H,L}^{-1}}{2.2^*_{\mu}} \|u\|^{2.2^*_{\mu}}.
 	\end{align*}
 	Since $2^*_{\mu}>1$,  we can choose $\al,\rho>$ such that $J_\la(u)\geq \al$ for $\|u\|= \rho$. \\
 	(ii) Let $u \in H_0^1(\Om)$ then 
 	\begin{align*}
 	J_\la(tu) = &\frac{a t^2}{2}\|u\|^2+\frac{\e^p t^{2\theta}}{2\theta} \|u\|^{2\theta}  - \frac{t^2}{2} \int_{\Om}f(x)|u|^2 ~dx- \frac{t^{2.2^*_{\mu}}}{2.2^*_{\mu}} \int_{\Om}\int_{\Om} \frac{|u(x)|^{2^{*}_{\mu}}|u(y)|^{2^{*}_{\mu}}}{|x-y|^{\mu}}~dxdy\\& \ra -\infty \text{ as } t \ra \infty.
 	\end{align*}
 	Hence, we can choose $t_0>0$ such that $e:= t_0u$ such that (ii) follows. \QED
 \end{proof}
 \begin{Lemma}
 	Let $\la\in (0,\;aS\|f\|_{L^r}^{-1})$  and  $\{u_n\}$ be a $(PS)_c$ sequence for $J_\la$ with 
 	\[
 	c<  \frac{N-\mu+2}{2(2N-\mu)}(aS_{H,L})^{\frac{2N-\mu}{N-\mu+2}}. 
 	\]
 	Then $\{u_n\}$ has a convergent subsequence.
 \end{Lemma}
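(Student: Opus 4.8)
\proof
The plan is to follow the proof of Proposition~\ref{khprop2}, replacing the a~priori bound coming from the Nehari manifold and Ekeland's principle by the hypothesis $\la\in(0,aS\|f\|_{L^r}^{-1})$, which is precisely what is needed both to bound $\{u_n\}$ and to absorb the linear term at the critical level. First I would show that $\{u_n\}$ is bounded in $H^1_0(\Om)$. Since $\theta<2^*_{\mu}$ all the coefficients below are positive, and
\[
J_\la(u_n)-\tfrac{1}{2.2^*_{\mu}}\ld J_\la^{\prime}(u_n),u_n\rd = a\Big(\tfrac12-\tfrac{1}{2.2^*_{\mu}}\Big)\|u_n\|^2 + \e^p\Big(\tfrac{1}{2\theta}-\tfrac{1}{2.2^*_{\mu}}\Big)\|u_n\|^{2\theta} - \la\Big(\tfrac12-\tfrac{1}{2.2^*_{\mu}}\Big)\int_\Om f(x)|u_n|^2\,dx.
\]
Using $\int_\Om f|u_n|^2\le\|f\|_{L^r}S^{-1}\|u_n\|^2$ and $\la\|f\|_{L^r}S^{-1}<a$, the left hand side, which equals $c+o_n(1)+o_n(1)\|u_n\|$, controls $\|u_n\|^2$ from above. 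Passing to a subsequence, $u_n\rp u$ in $H^1_0(\Om)$, $u_n\ra u$ in $L^\gamma(\Om)$ for all $\gamma\in[1,2^*)$ and a.e. in $\Om$, $\|u_n\|\ra\al$, and the second concentration-compactness principle applies as in Proposition~\ref{khprop2}, producing an at most countable set $I$, points $\{z_i\}_{i\in I}$ and positive numbers $\{v_i\},\{w_i\},\{x_i\}$ obeying the same relations, in particular $S_{H,L}v_i^{1/2^*_{\mu}}\le w_i$.

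Next I would prove that $I=\emptyset$. For a fixed $i$ and the cut-off $\varphi_{\e,i}$ of Proposition~\ref{khprop2}, H\"older's inequality gives $\big|\int_\Om f|u_n|^2\varphi_{\e,i}\,dx\big|\le\|f\|_{L^r(B(z_i,\e))}\|u_n\|_{L^{2^*}}^2\le C\|f\|_{L^r(B(z_i,\e))}$, so $\lim_{\e\ra0}\lim_{n\ra\infty}\int_\Om f|u_n|^2\varphi_{\e,i}\,dx=0$ by absolute continuity of the integral of $|f|^r$. Testing $J_\la^{\prime}(u_n)$ against $\varphi_{\e,i}u_n$ and reproducing verbatim the chain of inequalities of Proposition~\ref{khprop2} yields $aw_i\le v_i$, hence $w_i=0$ or $w_i\ge(aS_{H,L}^{2^*_{\mu}})^{1/(2^*_{\mu}-1)}$. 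Suppose some $w_{i_0}>0$. The family $\{f|u_n|^2\}$ is uniformly integrable (same H\"older estimate, with $\|f\|_{L^r(A)}\ra0$ as $|A|\ra0$) and $u_n\ra u$ a.e., so Vitali's theorem gives $\int_\Om f|u_n|^2\,dx\ra\int_\Om f|u|^2\,dx$. Letting $n\ra\infty$ in the displayed identity and using weak lower semicontinuity, namely $\al^2\ge\|u\|^2+w_{i_0}$, together with $\e^p(\tfrac{1}{2\theta}-\tfrac{1}{2.2^*_{\mu}})\al^{2\theta}\ge0$ and $a\|u\|^2-\la\int_\Om f|u|^2\ge(a-\la\|f\|_{L^r}S^{-1})\|u\|^2\ge0$, we obtain
\[
c \ \ge\ a\Big(\tfrac12-\tfrac{1}{2.2^*_{\mu}}\Big)w_{i_0} \ =\ \frac{N-\mu+2}{2(2N-\mu)}\,(aS_{H,L})^{\frac{2N-\mu}{N-\mu+2}},
\]
the last equality being the same exponent simplification used in Proposition~\ref{khprop2}. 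This contradicts $c<c_{\infty}$, so $I=\emptyset$, and consequently $\int_\Om\int_\Om\frac{|u_n(x)|^{2^*_{\mu}}|u_n(y)|^{2^*_{\mu}}}{|x-y|^{\mu}}\,dxdy\ra\int_\Om\int_\Om\frac{|u(x)|^{2^*_{\mu}}|u(y)|^{2^*_{\mu}}}{|x-y|^{\mu}}\,dxdy$.

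Finally, from $\ld J_\la^{\prime}(u_n),u_n\rd\ra0$, $\ld J_\la^{\prime}(u_n),u\rd\ra0$, the convergence just established of the Choquard term, and $\int_\Om f|u_n|^2\ra\int_\Om f|u|^2$, $\int_\Om f u_n u\ra\int_\Om f|u|^2$, one gets $(a+\e^p\al^{2(\theta-1)})\al^2=(a+\e^p\al^{2(\theta-1)})\|u\|^2+o_n(1)$; since $a+\e^p\al^{2(\theta-1)}\ge a>0$ this forces $\al^2=\|u\|^2$, i.e. $\|u_n\|\ra\|u\|$, and together with $u_n\rp u$ this gives $u_n\ra u$ strongly in $H^1_0(\Om)$. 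The step I expect to be the main obstacle is the passage to the limit in the nonlocal term, and in $\ld J_\la^{\prime}(u_n),u\rd$, once concentration has been ruled out, which relies on a Brezis-Lieb type splitting for the Choquard nonlinearity; and, more delicately, checking that the linear term is absorbed at exactly the level $c_{\infty}$, since here there is no $\widehat{D}\la^{2/(2-q)}$ slack as in the case $q<2$, so the strict inequality $\la\|f\|_{L^r}S^{-1}<a$ must be used sharply.
\QED
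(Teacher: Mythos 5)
Your proposal is correct and follows essentially the same route as the paper: it reduces everything to the concentration-compactness argument of Proposition \ref{khprop2} and absorbs the linear term at the concentration level using precisely the strict inequality $\la\|f\|_{L^r}S^{-1}<a$, exactly as the paper does via \eqref{kh24}. The only (immaterial) difference is that you test with $J_\la(u_n)-\frac{1}{2\cdot 2^*_\mu}\langle J_\la'(u_n),u_n\rangle$, which eliminates the nonlocal measure term directly, whereas the paper keeps the multiplier $\frac{1}{2\theta}$ and uses $v_{i_0}\ge a w_{i_0}$ to reach the same constant $\frac{N-\mu+2}{2(2N-\mu)}(aS_{H,L})^{\frac{2N-\mu}{N-\mu+2}}$.
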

 \begin{proof}
 	Proof follows using the same assertions as in Proposition \ref{khprop2} up to  \eqref{kh22}. Since  by H\"older's inequality and Sobolev embedding, we have 
 	\begin{align}\label{kh24}
 	\int_{ \Om}f(x)|u|^2~dx\leq S^{-1}\|f\|_{L^r}\|u\|^2. 
 	\end{align}
 Taking account the fact that  $\la\in (0,\;aS\|f\|_{L^r}^{-1})$,  \eqref{kh24} and proceeding as in  proof of Proposition \ref{khprop2}, we get 
 	\begin{align*}
 	c=&  \lim_{n\ra \infty}J_\la(u_n)- \frac{1}{2\theta}\ld J_\la^{\prime}(u_n), u_n \rd \nonumber\\
 	& \geq  \bigg\{\frac{a(\theta-1)}{2\theta  }  w_{i_0}  + \left(\frac{1}{2}-\frac{1}{2\theta}\right)\left( a- \la S^{-1}\|f\|_{L^r} \right) \|u\|^2   + \left(\frac{1}{2\theta}-\frac{1}{2.2^*_{\mu}}\right) v_{i_0} \bigg\} \\
 	& \geq    \left(\frac{1}{2}-\frac{1}{2.2^*_{\mu}}\right) a w_{i_0}  \geq    \frac{N-\mu+2}{2(2N-\mu)}(aS_{H,L})^{\frac{2N-\mu}{N-\mu+2}} \nonumber,
 	\end{align*}
 	which is not possible. Therefore, compactness of the Palais-Smale sequence holds.\QED
 \end{proof}
 Let
  $c_\la := \ds \inf_{ u \in H_0^1(\Om)\setminus \{0\}} \sup_{r\geq 0}J_\la(ru)$ be the Mountain Pass level.
 \begin{Lemma}\label{khlem12}
 	Let $N\geq 4$ then there exists a $\tilde{\e}>0$ such that if $\e \in (0,\tilde{\e})$, we have
 	\begin{align*}
c_\la< \frac{N-\mu+2}{2(2N-\mu)}(aS_{H,L})^{\frac{2N-\mu}{N-\mu+2}}.
 	\end{align*} 
 \end{Lemma}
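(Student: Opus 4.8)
The plan is to estimate the Mountain Pass level $c_\la=\inf_{u\ne 0}\sup_{r\ge 0}J_\la(ru)$ from above by testing with the truncated extremals $u_\e=\eta U_\e$ introduced in Section~4, and to show that for $\e$ small $\sup_{r\ge 0}J_\la(ru_\e)$ stays strictly below the threshold $c^\star:=\frac{N-\mu+2}{2(2N-\mu)}(aS_{H,L})^{\frac{2N-\mu}{N-\mu+2}}$; since $c_\la\le\sup_{r\ge 0}J_\la(ru_\e)$, this suffices. As in the proof of Lemma~\ref{khlem10}, I would split the supremum at a fixed small radius $r_0\in(0,1)$. Dropping the two negative terms of $J_\la$ gives $J_\la(ru_\e)\le \frac a2 r^2\|u_\e\|^2+\frac{\e^p}{2\theta}r^{2\theta}\|u_\e\|^{2\theta}\le C(r^2+r^{2\theta})$ with $C$ independent of small $\e$ (Proposition~\ref{khprop3}(i)), so $r_0$ can be chosen, independently of $\e$, so that $\sup_{0\le r\le r_0}J_\la(ru_\e)<c^\star$.

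For $r\ge r_0$ I would use that $f\ge m_f>0$ on $B_{2\de}(0)$ while $\eta\equiv 1$ on $B_\de(0)$, to get
\[
J_\la(ru_\e)\le \mathcal{V}(r)-\frac{\la m_f r_0^2}{2}\int_{B_\de(0)}|U_\e|^2\,dx,\qquad
\mathcal{V}(r):=\frac a2 r^2\|u_\e\|^2+\frac{\e^p}{2\theta}r^{2\theta}\|u_\e\|^{2\theta}-\frac{r^{2.2^{*}_{\mu}}}{2.2^{*}_{\mu}}\|u_\e\|_{NL}^{2.2^{*}_{\mu}}.
\]
Exactly as in Lemma~\ref{khlem10}, $\mathcal{V}$ attains its maximum at some $r_\e$ bounded above uniformly for small $\e$ (here $2^{*}_{\mu}>\theta$ is used), whence $\sup_{r\ge 0}\mathcal{V}(r)\le\sup_{r\ge 0}\mathcal{G}(r)+C\e^p$ with $\mathcal{G}$ as in \eqref{kh13}, and Proposition~\ref{khprop3}(i),(iii) yields $\sup_{r\ge 0}\mathcal{G}(r)\le c^\star+O(\e^{N-2})$. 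Hence
\[
\sup_{r\ge r_0}J_\la(ru_\e)\le c^\star+O(\e^{N-2})+C\e^p-\frac{\la m_f r_0^2}{2}\int_{B_\de(0)}|U_\e|^2\,dx.
\]

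To finish I would invoke the standard Brezis--Nirenberg mass estimate for the bubbles: there is $c_1>0$ such that $\int_{B_\de(0)}|U_\e|^2\,dx\ge c_1\e^2$ when $N\ge 5$ and $\int_{B_\de(0)}|U_\e|^2\,dx\ge c_1\e^2|\log\e|$ when $N=4$. Since $p>N-2$, in both cases $O(\e^{N-2})+C\e^p=o\big(\int_{B_\de(0)}|U_\e|^2\,dx\big)$ as $\e\to 0^+$: it is $o(\e^2)$ for $N\ge 5$, and $O(\e^2)=o(\e^2|\log\e|)$ for $N=4$. Therefore there is $\tilde\e>0$ so that the displayed right-hand side is $<c^\star$ for every $\e\in(0,\tilde\e)$; combined with the bound on $[0,r_0]$ this gives $\sup_{r\ge0}J_\la(ru_\e)<c^\star$, hence $c_\la<c^\star$.

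The heart of the matter --- and the reason $N\ge 4$ is indispensable --- is exactly this comparison of powers of $\e$: the concentration energy $\|u_\e\|^2$ carries the error $O(\e^{N-2})$, which is strictly smaller than the $O(\e^2)$ gain produced by $\la\int f|u_\e|^2$ only for $N\ge 5$, the borderline case $N=4$ being saved by the logarithmic gain $\e^2|\log\e|$; for $N=3$ both are of order $\e$ and the argument collapses. The only other delicate point, handled as in Lemma~\ref{khlem10}, is the uniform bound on the maximizer $r_\e$ of $\mathcal{V}$, which ensures that the Kirchhoff perturbation $\e^p\|u_\e\|^{2\theta}$ only enters as an $O(\e^p)$ error term.
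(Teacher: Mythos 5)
Your proof is correct and follows essentially the same route as the paper: split the supremum at a fixed $r_0$, reduce $\sup_{r\ge r_0}J_\la(ru_\e)$ to $\sup\mathcal{G}+O(\e^{N-2})+O(\e^p)-\frac{\la m_f r_0^2}{2}\int_{B_\de(0)}|U_\e|^2\,dx$ via the uniform bound on the maximizer $r_\e$ of $\mathcal{V}$, and then beat the error terms with the Brezis--Nirenberg mass estimate ($\e^2$ for $N\ge 5$, $\e^2|\log\e|$ for $N=4$), exactly as in the paper's reduction to \eqref{kh14} and \eqref{kh26}. Your closing discussion of why $N\ge 4$ and $p>N-2$ are needed matches the paper's power comparison precisely.
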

 \begin{proof}
 	Adopting the Same asymptotic analysis  as in Lemma \ref{khlem10} up to \eqref{kh14}, there exists a $r_0 \in (0,1)$ such that $
\ds 	\sup_{0\leq r\leq r_0}J_\la(r u_\e) < \frac{N-\mu+2}{2(2N-\mu)}(aS_{H,L})^{\frac{2N-\mu}{N-\mu+2}}$, for all $\la>0$ and 
 	\begin{align}\label{kh23}
 	\sup_{ r\geq r_0}	J_\la(r u_\e) \leq \frac{N-\mu+2}{2(2N-\mu)}\left(aS_{H,L}\right)^{\frac{2N-\mu}{N-\mu+2}}+C_1\e^{N-2} - \frac{m_f\la r_0^2}{2} \int_{B_{\de}(0)} |U_\e|^2~dx , 
 	\end{align}
 	Using \eqref{kh15}, we have
 	\begin{equation}
 	\begin{aligned}\label{kh26}
 	\int_{B_{\de}(0)} |U_\e|^2~dx & \geq  C_3 \left \{
 	\begin{array}{ll}
 	\e^{2} |\log\; \e|, & \text{ if }  N=4,\\
 	\e^{2}, &  \text{ if }  N>4 . \\
 	\end{array} 
 	\right.
 	\end{aligned}
 	\end{equation}
 	Using  \eqref{kh23} and \eqref{kh26}, it follows that 
 	\begin{equation*}
 	\begin{aligned}
 	\sup_{ r\geq r_0}	J_\la(r u_\e) &   \leq \frac{N-\mu+2}{2(2N-\mu)}\left(aS_{H,L}\right)^{\frac{2N-\mu}{N-\mu+2}}+C_1\e^{N-2}  - C_3 \la  \left\{
 	\begin{array}{ll}
 	\e^{2} |\log\; \e|, & \text{ if }  N=4,\\
 	\e^{2}, &  \text{ if }  N\geq 5 . \\
 	\end{array} 
 	\right.
 	\end{aligned}
 	\end{equation*}
 	In case of  $N= 4$  as  $\e\ra 0$ then $|\log \e| \ra \infty$, thus  we can choose $\e^*>0$ such that for every $0<\e<\e^*$ we have, $
 	C_1\e^{2}- C_3 \la \e^{2} |\log\; \e|< 0.$ In case of  $N>4$,   we can choose  $\e^{**}>0$ such that for every $0<\e<\e^{**}$ we have, $	C_1\e^{N-2}- C_3 \la \e^{2} < 0.$ Now define $\tilde{\e}= \min\{\e^*,\; \e^{**}\}$. Therefore, for all $\la>0$ and $\e \in (0, \; \tilde{\e})$ we have $
 	\ds \sup_{ r\geq 0}J_\la(r u_\e) < \frac{N-\mu+2}{2(2N-\mu)}(aS_{H,L})^{\frac{2N-\mu}{N-\mu+2}}$, then by definition of $c_\la$, we have $c_\la <  \frac{N-\mu+2}{2(2N-\mu)}(aS_{H,L})^{\frac{2N-\mu}{N-\mu+2}}$.\QED	 
 \end{proof}
\textbf{Proof of Theorem \ref{khthm3}:} 
 	From Lemmas \ref{khlem11}, \ref{khlem12} and \cite[Theorem 6.1]{struwe2}, we obtain the existence of a solution $u \in H_0^1(\Om) $ of $(P_\la)$. Using \cite[Lemma 4.4]{yangjmaa}, we have $u$ is a positive solution of $(P_\la)$. \QED


\begin{thebibliography}{11}
 	\bibitem{alves1}C. O. Alves, F. J. Corr\^ea and G. M.  Figueiredo, {\it  On a class of nonlocal elliptic problems with critical growth}, Differential Equations and  Applications {\bf2}  (2010), no. 3, 409-17.

\bibitem{brezis1} H. Brezis and L. Nirenberg, {\it A minimization problem with critical exponent and nonzero data}, Symmetry in Nature, Scuola Norm. Sup. Pisa {\bf 1} (1989), 129-140.

	\bibitem{wu1} C. Y. Chen, Y. C. Kuo and  T. F. Wu, {\it The Nehari manifold for a Kirchhoff type problem involving sign-changing weight functions}, Journal of Differential  Equations {\bf 250} (2011), 1876--1908.

	\bibitem{pawan1}] J. M.  do \'O,  X. He and P. K. Mishra, {\it  Fractional Kirchhoff
	problem with critical indefinite nonlinearity}, arXiv:1607.01200 

 
 		\bibitem{dpp} P. Drabek and S. I. Pohozaev, {\it Positive solutions for the p-Laplacian: application of the Fibering method,}
 		Proceedings of  Royal Society of Edinburgh Sect A {\bf127} (1997), 703--726.
 		
 		\bibitem{eke1974} I. Ekeland, {\it On the variational principle,} Journal of Mathematical Analysis and Applications {\bf 17} (1974),  324-353.


\bibitem{valdi1} A. Fiscella and E. Valdinoci, {\it A critical Kirchhoff type problem involving a nonlocal operator}, Nonlinear Analysis: Theory, Methods and  Applications {\bf 94} (2014), 156-170.

 	
 

	\bibitem{Figueiredo1}	G.M. Figueiredo and J.R. Junior, {\it  Multiplicity of solutions for a Kirchhoff equation with subcritical or critical growth}, Differential and Integral Equations {\bf 25}  (2012), no. 9/10,  853-68.
	
\bibitem{Figueiredo2}	G. M. Figueiredo, {\it Existence of a positive solution for a Kirchhoff problem type with critical growth via truncation argument},  Journal of Mathematical Analysis and Applications {\bf 401 } (2013), no. 2,   706-713.

	
\bibitem{yangjmaa} F. Gao and M. Yang,  {\it On nonlocal Choquard equations with Hardy--Littlewood--Sobolev critical exponents}, Journal of Mathematical Analysis and Applications, {\bf448} (2017), no. 2, 1006-1041.

\bibitem{yang} F. Gao and  M. Yang, {\it On the Brezis-Nirenberg type critical problem for nonlinear Choquard equation}, Science China Mathematics {\bf61} (2018), no. 7,  1219-1242. 
 	\bibitem{gaoyang} F. Gao, E.D. da Silva, M. Yang, J. Zhou,  {\it Existence of solutions for critical Choquard equations via the concentration compactness method}, preprint; arXiv:1712.08264. 2017.
 	\bibitem{systemchoq} J. Giacomoni, T. Mukherjee and K. Sreenadh, {\it Doubly nonlocal system with Hardy-Littlewood-Sobolev critical nonlinearity}, Journal of Mathematical analysis and applications, {\bf 467} (2018), 638-672.
\bibitem{kirchhoff}G. Kirchhoff, Mechanik, Teubner, Leipzig, 1883.

\bibitem{chunlei} C. Y. Lei, G. S. Liu and L. T. Guo, {\it Multiple positive solutions for a Kirchhoff type problem with a critical nonlinearity}, Nonlinear Analysis: Real World Applications {\bf 31} (2016),  343-355.

\bibitem{li}F. Li, C. Gao and X. Zhu, {\it Existence and concentration of sign-changing solutions to Kirchhoff-type system with Hartree-type nonlinearity}, Journal of Mathematical Analysis and Applications {\bf 448} (2017), no. 1, 60-80.

\bibitem{ehleib} E. H. Lieb, {\it Existence and uniqueness of the minimizing solution of Choquard’s
	nonlinear equation}, Studies in Applied Mathematics {\bf57} (1976/77), no. 2,  93-105.
 
\bibitem{leib} E. H. Lieb and  M. Loss,  {\it Analysis} ,  graduate studies in mathematics,  American Mathematical Society, Providence , RI, 2001. 

\bibitem{lu}D. L\"u, {\it A note on Kirchhoff-type equations with Hartree-type nonlinearities}, Nonlinear Analysis: Theory, Methods and  Applications {\bf 99} (2014), 35-48. 	 

\bibitem{pawan3}  P. K. Mishra and K. Sreenadh, {\it Fractional p-Kirchhoff system with sign changing nonlinearities},  Revista de la Real Academia de Ciencias Exactas, Físicas y Naturales, Serie A. Matemáticas {\bf 111} (2017), no. 1,  281-296. 

\bibitem{moroz2}	V. Moroz and  J. Van Schaftingen, {\it Existence of groundstates for a class of nonlinear Choquard equations}, Transactions of  American Mathematical Society {\bf 367} (2015), 6557-6579.

\bibitem{moroz3} V. Moroz and J. Van Schaftingen, {\it Groundstates of nonlinear Choquard equations: Hardy-Littlewood-Sobolev critical exponent,} Communications in  Contemporary Mathematics {\bf 17}  (2015), 1550005.

\bibitem{moroz4} V. Moroz and J. Van Schaftingen, {\it Groundstates of nonlinear Choquard equations: existence, qualitative properties and decay asymptotics}, Journal of Functional Analysis {\bf 265}  (2013), no. 2,  153-184.
 	\bibitem{guide} V. Moroz, J. Van Schaftingen, {\it A guide to the Choquard equation}, Journal of Fixed Point Theory and Applications {\bf 19} (2017), no. 1,  773-813.
 	\bibitem{ts} T. Mukherjee and K. Sreenadh, Critical growth elliptic problems with Choquard type nonlinearity, Preprint: arXiv:1811.04353.
 	
 	\bibitem{pekar} S. Pekar, {\it Untersuchung \"uber die Elektronentheorie der Kristalle}, Akademie Verlag, Berlin, 1954.
 	\bibitem{pucci}P. Pucci, M. Xiang and B. Zhang,{\it Existence results for Schrödinger–Choquard–Kirchhoff equations involving the fractional p-Laplacian}, Advances in Calculus of Variations (2017).
 	
 	
 \bibitem{struwe2}  M. Struwe,  {\it Variational Methods}, Springer, New York, 1990.
	
	 
 	\bibitem{taren}G. Tarantello,  {\it On nonhomogeneous elliptic equations involving critical Sobolev exponent,}  Annales de l'Institut Henri Poincare (C) Non Linear Analysis {\bf 9}  (1992), no. 3, 281-304.
 	
 	\bibitem{willem} M. Willem, {\it Minimax Theorems}, Birkh\"{a}user Boston, MA, 1996.
 	
 \bibitem{wu2008}T. F. Wu, {\it On semilinear elliptic equations involving  critical Sobolev expoenents and sign-changing weight function,} Communications on Pure and Applied Analysis  {\bf 7} (2008),  383--405.
 \end{thebibliography}
\end{document}